\newcommand{\PP}{\mathbb{P}}
\newcommand{\EE}{\mathbb{E}}
\newcommand{\QQ}{\mathbb{Q}}
\newcommand{\RR}{\mathbb{R}}
\newcommand{\NN}{\mathbb{N}}
\newcommand{\NNN}{\mathcal{N}}
\newcommand{\B}{\mathcal{B}}
\newcommand{\LL}{\mathcal{L}}
\newcommand{\PPP}{\mathcal{P}}
\newcommand{\e}{\epsilon}
\newcommand{\oo}{\omega}
\newcommand{\al}{\alpha}
\newcommand{\G}{\Gamma}
\newcommand{\D}{\mathcal{D}}
\newcommand{\Y}{\mathcal{Y}}
\newcommand{\F}{\mathcal{F}}
\newcommand{\GG}{\mathcal{G}}
\newcommand{\X}{\mathcal{X}}
\numberwithin{equation}{section}
\theoremstyle{plain}
\newtheorem{thm}{Theorem}[section]
\newtheorem{cor}[thm]{Corollary}
\newtheorem{lem}[thm]{Lemma}
\newtheorem{df}[thm]{Definition}
\newtheorem{prop}[thm]{Property}
\newtheorem{rk}[thm]{Remark}
\begin{document}
\setlength{\parindent}{0mm}
\begin{frontmatter}
\title{Law of Large Numbers for Semi-Markov inhomogeneous Random Evolutions on Banach spaces}
\begin{aug}
\author{\snm{N. Vadori}\thanksref{t2}}
\and
\author{ \snm{A. Swishchuk}\thanksref{t3}}

\thankstext{t2}{Corresponding author: 2500 University Drive NW, Calgary, AB, Canada T2N 1N4, email: nvadori@ucalgary.ca}
\thankstext{t3}{2500 University Drive NW, Calgary, AB, Canada T2N 1N4, email: aswish@ucalgary.ca}
\runauthor{N. Vadori and A. Swishchuk}

\affiliation{University of Calgary\\ Alberta, Canada}
\end{aug}

\begin{abstract}
Using backward propagators, we construct inhomogeneous Random Evolutions on Banach spaces driven by (uniformly ergodic) Semi-Markov processes. After studying some of their properties (measurability, continuity, integral representation), we establish a Law of Large Numbers for such inhomogeneous Random Evolutions, and more precisely their weak convergence - in the Skorohod space $D$ - to an inhomogeneous semigroup. A martingale characterization of these inhomogeneous Random Evolutions is also obtained. Finally, we present applications to inhomogeneous L\'{e}vy Random Evolutions.
\end{abstract}
\end{frontmatter}

\small
\textbf{Keywords:} inhomogeneous random evolutions; weak convergence; Skorohod space; martingale problem; backward propagators; law of large numbers; semi-Markov processes; Banach spaces.\\

\textbf{Subject Classification AMS 2010:} Primary: 60F17, 60F05; Secondary: 60B10, 60G44.\\

\textbf{Notations to be used throughout the paper:}
\begin{itemize}
\item $\NN$, $\NN^*$: non negative integers, positive integers
\item $\RR$, $\RR^*$, $\RR^+$, $\RR^{+*}$:  real numbers, non zero real numbers, non negative real numbers, positive real numbers
\item $L(E,F)$(resp. $\B(E,F)$): the space of linear (resp. bounded linear) operators $E \to F$
\item $Bor(E)$: Borel sigma-algebra on $E$
\item $C^n(E)$ (resp. $C_b^n(E)$, $C_0^n(E)$): continuous (resp. continuous bounded, continuous vanishing at infinity) functions $E \to \RR$ such that the $n^{th}$ derivative is in $C(E)$ (resp. $C_b(E)$, $C_0(E)$)
\item $L^p_{E}(\Omega, \F, \PP)$: quotient space of $\F - Bor(E)$ measurable functions $\Omega \to E$ s.t. $\int_\Omega ||f||_E^pd\PP<\infty$ 
\item $B_E(\Omega, \F)$ (resp. $B^b_E(\Omega, \F)$): the space of $\F - Bor(E)$ measurable (resp. measurable bounded) functions $\Omega \to E$
\item $D(J,E)$ the Skorohod space of RCLL (right-continuous with left limits) functions $J \to E$ $(J \in Bor(\RR))$
\item $\PPP(E)$: family of Borel probability measures on $E$
\item $\LL(X|\PP)$ (or $\LL(X)$ if there is no ambiguity): Law of $X$, i.e. $\PP \circ X^{-1}$
\item $\stackrel{a.e.}{\to}$, $\stackrel{P}{\to}$, $\Rightarrow$: convergence resp. a.e., in probability, in distribution.
\item $t^{\e,s}:=s+\e(t-s)$ (section 4 and Appendix only)
\item $|E|$: cardinal of E
\item $d$: Skorohod metric (see \cite{EK}, chapter 3, equation 5.2)
\end{itemize}

\section{Introduction}

Random Evolutions began to be studied in the 1970's, because of their potential applications to biology, movement of particles, signal processing, quantum physics, finance \& insurance, etc. (see \cite{GH}, \cite{He}, \cite{HP}). As R. Hersh says in \cite{He}: "Random evolutions model a situation in which an evolving system changes its law of motion because of random changes in the environment". These random changes are usually modeled by jump processes, because they aim at modeling the fact that the system moves from some state to another, for example a stock that switches between different volatilities. In 1972, T. Kurtz established a Law of Large numbers for Random Evolutions constructed with homogeneous semigroups (\cite{K}). Following this work, J. Watkins published a very interesting paper (\cite{Wa84}), followed by two more (\cite{Wa85a}, \cite{Wa85b}) in which, using a martingale characterization of the Random Evolution, he established a Central Limit Theorem for Random Evolutions constructed with i.i.d. generators of (homogeneous) semigroups under some technical assumptions, especially on the dual of the Banach space. Later, A. Swishchuk and V. Korolyuk established a Law of Large Numbers and a Central Limit theorem for Random Evolutions driven by (uniformly ergodic) Semi-Markov processes (\cite{S}, \cite{SK} chapter 4, \cite{LS}) in which the switching between the different (homogeneous) semigroups occurs at the jump times of a semi-Markov process. \\

To the best of our knowledge, only homogeneous Random Evolutions have been studied as of today, i.e. Random Evolutions constructed with homogeneous semigroups. In this paper, we consider Random Evolutions constructed with backward inhomogeneous semigroups (\ref{1000}), also called backward propagators or backward evolution systems in the literature (e.g. \cite{P} chapter 5, \cite{GVC} chapter 2) and driven by (uniformly ergodic) Semi-Markov processes. We choose the backward case for practical reasons: for example the Chapman-Kolmogorov equation is backward in time. In this paper we present several new results:

\begin{enumerate}

\item In sections 2 and 3, we study some general properties of inhomogeneous semigroups and construct inhomogeneous Random Evolutions. These sections are mainly related to functional analysis but we need them to prove the main law of large numbers result of section 4, especially the characterization of inhomogeneous semigroups as a unique solution to a well-posed Cauchy problem (\ref{10}), a second order Taylor formula for inhomogeneous semigroups (\ref{3}) and the forward integral representation for inhomogeneous Random Evolutions (\ref{14}).

\item In section 4 we establish our main result (\ref{27}), that can be thought of as a law of large numbers for inhomogeneous Random Evolutions driven by uniformly ergodic Semi-Markov processes: we index the inhomogeneous Random Evolution by a small parameter $\e$ that we use to rescale time so that the Semi-Markov process goes to its unique stationary distribution as $\e \to 0$. We also obtain a martingale characterization of the Random Evolution (\ref{25}). We establish the weak convergence of the Random Evolution in the Skorohod space $D(J,Y)$ to an inhomogeneous semigroup, where $J \subseteq \RR^+$ and $Y$ is a separable Banach space. The proof of the unicity of the limiting distribution is linked to the unicity of the Cauchy problem of section 2 and cannot be done using what has been done before in \cite{SK} or \cite{Wa84} for the following reason: in the latter papers, because of the time-homogeneity of the random evolutions, weak convergence of sequences of martingales of the following type were studied:
\begin{align*}
& X_n(t)-X_n(s)-\int_s^t A X_n(u)du,
\end{align*}
where $A$ is the generator of a semigroup and $\{X_n(\bullet)\}_{n \in \NN}$ a sequence of $D(J,Y)$ valued random variables. Provided the relative compactness of $\{X_n(\bullet)\}_{n \in \NN}$, one could choose a limiting process $X_0(\bullet)$ and prove its unicity in distribution. The problem that we will be facing in our paper is the following: in the (backward) inhomogeneous case, we will have to prove weak convergence of sequences of martingales of the type:
\begin{align*}
& V_n(t)f-V_n(s)f-\int_s^t V_n(u)A(u)fdu,
\end{align*}
where $f \in Y$, $A(t)$ is the generator of an inhomogeneous semigroup and $\{V_n(\bullet)\}_{n \in \NN}$ are stochastic processes with sample paths in $D(J,\B(Y))$, where $\B(Y)$ is the space of bounded linear operators on $Y$. Nevertheless, we will typically only have the relative compactness of $\{V_n(\bullet)f\}_{n \in \NN}$ in $D(J,Y)$, for every $f \in Y$ and not the relative compactness of $\{V_n(\bullet)\}_{n \in \NN}$ in $D(J, \B(Y))$, one of the reasons being that $\B(Y)$ might not be separable (and all the usual techniques for the proof of relative compactness in $D(J, E)$ require $E$ to be separable, see \cite{EK}). Therefore we cannot just pick a limiting process $V_0(\bullet) \in D(J, \B(Y))$ and prove its unicity in distribution, but we will have to construct it ourself using mainly density and the Skorohod representation theorem, handling negligible sets of our probability space with care.

\item Finally an important remark should be made about applications where $Y=C_0(\RR^d)$, the space of continuous functions $\RR^d \to \RR$ vanishing at infinity. To prove the crucial compact containment criterion, it is said in both \cite{Wa84} and \cite{SK} that there exists a compact embedding of a Sobolev space into $C_0(\RR^d)$, which is not true. Therefore we shall see on the specific example of  inhomogeneous L\'{e}vy Random Evolutions (section 5) how we can still prove the compact containment criterion using a characterization of compact sets in $C_0(\RR^d)$ (see \ref{18}), and how this proof can be recycled in the case of other examples.

\end{enumerate}

The paper is organized as follows: in section 2 we present some results on inhomogeneous semigroups, in section 3 we introduce inhomogeneous random evolutions driven by (uniformly ergodic) Semi-Markov processes and some of their properties, in section 4 we prove our main law of large number result of weak convergence of the inhomogeneous random evolution to an inhomogeneous semigroup as well as a martingale characterization of the inhomogeneous Random Evolution, and finally in section 5 we give an application to inhomogeneous L\'{e}vy Random Evolutions.\\

\textbf{Convention throughout the paper:}
Let $(Y,|| \cdot ||)$ be a real separable Banach space. Let $\Y$ be the Borel sigma-algebra generated by the norm topology. Let $Y^*$ the dual space of $Y$. $(Y_1,|| \cdot ||_{Y_1})$ is assumed to be a real separable Banach space which is continuously embedded in $Y$ (this idea was used in \cite{P}, chapter 5), i.e. $Y_1 \subseteq Y$ and $\exists c_1 \in \RR^+$: $||f|| \leq c_1 || f ||_{Y_1}$ $\forall f \in Y_1$. Unless mentioned otherwise, limits are taken in the $Y-$norm. Limits in the $Y_1$ norm will be denoted $Y_1-\lim$. In the following, $J$ will refer either to $\RR^+$ or to $[0,T_\infty]$ for some $T_\infty>0$ and $\Delta_J:=\{(s,t) \in J^2: s \leq t  \}$. Let also, for $s \in J$: $J(s):=\{t \in J: s \leq t  \}$ and $\Delta_J(s):=\{(r,t) \in J^2: s \leq r \leq t  \}$.\\

\section{Inhomogeneous operator semigroups}

This section presents some results on inhomogeneous semigroups. Some of them are similar to what can be found in \cite{P} chapter 5 and \cite{GVC} chapter 2, but to the best of our knowledge, they are new.

\begin{df}
A function $\G: \Delta_J \to \B(Y)$ is called a (backward) inhomogeneous $Y$-semigroup if:\\
i) $\forall t \in J$: $\G(t,t)=I$\\
ii) $\forall (s,r), (r,t) \in \Delta_J$: $\G(s,r)\G(r,t)=\G(s,t)$\\
If in addition, $\forall (s,t) \in \Delta_J$: $\G(s,t)=\G(0,t-s)$, $\G$ is called homogeneous $Y-$semigroup.\\
\end{df}

We now introduce the generator of the inhomogeneous semigroup:
\begin{df}
Let $t \in J$ and:
\begin{align*}
 \D(A_\G(t)):=\left\{ f \in Y: \lim_{\substack{h \downarrow 0\\ t+h \in J}}\frac{(\G(t,t+h)-I)f}{h}=  \lim_{\substack{h \downarrow 0\\ t-h \in J}}\frac{(\G(t-h,t)-I)f}{h} \in Y \right\}
\end{align*} 

Define $\forall t \in J$, $\forall f \in \D(A_\G(t))$:
\begin{align*}
&A_\G(t)f:=\lim_{\substack{h \downarrow 0\\ t+h \in J}}\frac{(\G(t,t+h)-I)f}{h}=  \lim_{\substack{h \downarrow 0\\ t-h \in J}}\frac{(\G(t-h,t)-I)f}{h}
\end{align*} 
Let $\D(A_\G):= \bigcap \limits_{t \in J} \D(A_\G(t))$. Then $A_\G: J \to L(\D(A_\G), Y)$ is called the generator of the inhomogeneous $Y$-semigroup $\G$.\\
\end{df}

The following definitions deal with continuity and boundedness of semigroups:
\begin{df}
An inhomogeneous $Y$-semigroup $\G$ is $\B(Y)-$bounded (resp. $\B(Y)-$ contraction) if $\sup \limits_{(s,t) \in \Delta_J}||\G(s,t)||_{\B(Y)} < \infty$ (resp. $\sup \limits_{(s,t) \in \Delta_J}||\G(s,t)||_{\B(Y)} \leq 1$).
\end{df}

\begin{df}
Let $E_Y \subseteq Y$. An inhomogeneous $Y$-semigroup $\G$ is $E_Y-$strongly continuous if $\forall (s,t) \in \Delta_J$, $\forall f \in E_Y$:
\begin{align*}
\lim_{\substack{(h_1,h_2) \to (0,0)\\ (s+h_1,t+h_2) \in \Delta_J}} ||\G(s+h_1,t+h_2)f-\G(s,t)f||=0
\end{align*} 
\end{df}

\begin{df}
Let $E_{Y_1} \subseteq Y_1$. An inhomogeneous $Y$-semigroup $\G$ is $E_{Y_1}-$super strongly continuous if $\forall (s,t) \in \Delta_J$, $\forall f \in E_{Y_1}$:
\begin{align*}
\G(s,t)Y_1 \subseteq Y_1 \mbox{ and }\lim_{\substack{(h_1,h_2) \to (0,0)\\ (s+h_1,t+h_2) \in \Delta_J}} ||\G(s+h_1,t+h_2)f-\G(s,t)f||_{Y_1}=0
\end{align*} 
\end{df}

\textbf{Remark}: throughout the paper, we will use this terminology that "super strong continuity"  refers to continuity in the $Y_1-$norm and that  "strong continuity"  refers to continuity in the $Y-$norm.\\

\textbf{Terminology}: For the above types of continuity, we use the terminology $t-$continuity (resp. $s-$continuity) for the continuity of the partial application $u \to \G(s,u)f$ (resp. $u \to \G(u,t)f $).\\

The following theorems give conditions under which the semigroup is differentiable in $s$ and $t$.
\begin{thm}
\label{111}
Let $\G$ be an inhomogeneous $Y$-semigroup. Assume that $Y_1 \subseteq \D(A_\G)$ and that $\G$ is $Y_1-$super strongly $s-$continuous, $Y_1-$strongly $t-$continuous. Then:
\begin{align*}
\frac{\partial}{\partial s} \G(s,t)f=-A_\G(s) \G(s,t)f \hspace{5mm} \forall (s,t) \in \Delta_J, \forall f \in Y_1
\end{align*}
\end{thm}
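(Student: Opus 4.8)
The plan is to factor a short time increment out of $\G(s\pm h,t)$ by means of the semigroup property, thereby reducing the statement to the defining limits of the generator, and then to use the auxiliary space $Y_1$ to absorb the resulting error terms. Fix $(s,t)\in\Delta_J$ and $f\in Y_1$. Since $\G$ is $Y_1$-super strongly $s$-continuous, $\G$ leaves $Y_1$ invariant, so $\G(s,t)f\in Y_1\subseteq\D(A_\G)\subseteq\D(A_\G(s))$. For $h>0$ small, property ii) of an inhomogeneous semigroup yields the two identities
\begin{align*}
&\frac{\G(s+h,t)f-\G(s,t)f}{h}=-\frac{(\G(s,s+h)-I)\,\G(s+h,t)f}{h}\qquad(s+h\le t),\\
&\frac{\G(s,t)f-\G(s-h,t)f}{h}=-\frac{(\G(s-h,s)-I)\,\G(s,t)f}{h}\qquad(s-h\in J).
\end{align*}

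For the left derivative, the argument $\G(s,t)f$ in the second identity does not depend on $h$, so, since $\G(s,t)f\in\D(A_\G(s))$, the right-hand side converges as $h\downarrow0$ to $-A_\G(s)\G(s,t)f$ directly from the definition of $\D(A_\G(s))$. For the right derivative I split
\begin{align*}
-\frac{(\G(s,s+h)-I)\,\G(s+h,t)f}{h}&=-\frac{(\G(s,s+h)-I)\,\G(s,t)f}{h}\\
&\quad-\frac{(\G(s,s+h)-I)\big(\G(s+h,t)f-\G(s,t)f\big)}{h};
\end{align*}
the first term tends to $-A_\G(s)\G(s,t)f$ again by the generator definition, and it remains to show the second term vanishes.

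To do this I would prove that $\{(\G(s,s+h)-I)/h\}_{0<h\le t-s}$ is an equibounded family of bounded operators $Y_1\to Y$. For fixed $g\in Y_1$ the vectors $(\G(s,s+h)-I)g/h$ stay bounded as $h\downarrow0$ since the limit $A_\G(s)g$ exists, and for $h$ bounded away from $0$ they are controlled by $\sup_h\|\G(s,s+h)g-g\|$, which is finite because $h\mapsto\G(s,s+h)g$ is $Y$-continuous on the compact interval $[0,t-s]$ by $Y_1$-strong $t$-continuity; hence $\sup_{0<h\le t-s}\|(\G(s,s+h)-I)g/h\|<\infty$ for every $g\in Y_1$. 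Banach--Steinhaus on the Banach space $Y_1$ then yields a finite $M:=\sup_{0<h\le t-s}\|(\G(s,s+h)-I)/h\|_{\B(Y_1,Y)}$, so the second term above is bounded in $Y$-norm by $M\,\|\G(s+h,t)f-\G(s,t)f\|_{Y_1}$, which tends to $0$ by $Y_1$-super strong $s$-continuity. Thus both one-sided derivatives of $s\mapsto\G(s,t)f$ exist and equal $-A_\G(s)\G(s,t)f$ (at an endpoint of $J$, or when $s=t$, only the relevant one-sided derivative is meaningful and still gives the formula), which is the assertion.

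The main obstacle is the second term in the splitting: its argument moves with $h$ and we have only qualitative convergence of it, with no rate, so the naive $1/h$ estimate is useless. The resolution rests on two features built into the hypotheses through $Y_1$: the $s$-continuity is upgraded to the $Y_1$-norm (super strong continuity), and the difference quotients $(\G(s,s+h)-I)/h$, while possibly unbounded on $Y$, are uniformly bounded as operators out of $Y_1$ by Banach--Steinhaus. The pairing of a stronger convergence with a weaker-but-uniform bound is precisely what forces the product to zero, and is the structural reason the statement is phrased in terms of the auxiliary space $Y_1$.
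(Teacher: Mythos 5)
Your proposal is correct and follows essentially the same route as the paper's proof: the same semigroup factorizations for the two one-sided difference quotients, the same splitting of the right-hand quotient into a term handled by the generator definition plus an error term, the same Banach--Steinhaus argument on $Y_1$ (pointwise bound near $0$ from the generator limit, away from $0$ from $t$-continuity on a compact interval) to get a uniform $\B(Y_1,Y)$ bound, and the same use of $Y_1$-super strong $s$-continuity and $\G(s,t)Y_1\subseteq Y_1$ to kill the error term.
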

\begin{proof} see Appendix \ref{A1} \end{proof}

\begin{thm}
\label{2}
Let $\G$ an inhomogeneous $Y$-semigroup. Assume $Y_1 \subseteq \D(A_\G)$ and that $\G$ is $Y-$strongly $t$-continuous. Then we have:
\begin{align*}
\frac{\partial}{\partial t} \G(s,t)f= \G(s,t) A_\G(t)f \hspace{7mm} \forall (s,t) \in \Delta_J, \forall f \in Y_1
\end{align*}
\end{thm}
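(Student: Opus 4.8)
The plan is to establish the forward and backward one-sided $t$-derivatives of $u \mapsto \G(s,u)f$ separately and to check that both equal $\G(s,t)A_\G(t)f$. Since $Y_1 \subseteq \D(A_\G) = \bigcap_{u \in J}\D(A_\G(u))$, any $f \in Y_1$ lies in $\D(A_\G(t))$, so by the very definition of the generator both difference quotients
\[
\frac{(\G(t,t+h)-I)f}{h}, \qquad \frac{(\G(t-h,t)-I)f}{h}
\]
converge in the $Y$-norm to $A_\G(t)f$ as $h \downarrow 0$. This is the only way the hypothesis $Y_1 \subseteq \D(A_\G)$ enters.

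For the forward derivative (at a point $t$ with $t+h \in J$ for small $h>0$) I would use the composition property $\G(s,t+h)=\G(s,t)\G(t,t+h)$ to write
\[
\frac{\G(s,t+h)-\G(s,t)}{h}\,f \;=\; \G(s,t)\,\frac{(\G(t,t+h)-I)f}{h}.
\]
As $h \downarrow 0$ the vector on the right converges to $A_\G(t)f$, and since $\G(s,t)\in\B(Y)$ is a \emph{fixed} bounded operator it carries this convergent net to $\G(s,t)A_\G(t)f$. No continuity assumption is needed for this half.

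For the backward derivative (needed only when $t>s$, so that $t-h\in J$ for small $h>0$) the composition property in the form $\G(s,t)=\G(s,t-h)\G(t-h,t)$ gives
\[
\frac{\G(s,t)-\G(s,t-h)}{h}\,f \;=\; \G(s,t-h)\,g_h, \qquad g_h:=\frac{(\G(t-h,t)-I)f}{h},
\]
with $g_h \to g:=A_\G(t)f$ in $Y$. Now \emph{both} factors vary with $h$, so I would split
\[
\G(s,t-h)g_h-\G(s,t)g \;=\; \G(s,t-h)(g_h-g) \;+\; \bigl(\G(s,t-h)-\G(s,t)\bigr)g.
\]
The second term tends to $0$ by $Y$-strong $t$-continuity of $\G$. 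For the first term I need a local operator-norm bound near $t$: for each fixed $v\in Y$ the map $u\mapsto\G(s,u)v$ is continuous on the compact interval $[s,t]$, hence its image is bounded, so by the uniform boundedness principle $M_{s,t}:=\sup_{u\in[s,t]}\|\G(s,u)\|_{\B(Y)}<\infty$; then $\|\G(s,t-h)(g_h-g)\|\le M_{s,t}\,\|g_h-g\|\to0$. Hence the backward derivative exists and equals $\G(s,t)A_\G(t)f$, matching the forward one; at the right endpoint of $J$ only the backward derivative is relevant and the conclusion is unchanged, and at $t=s$ only the forward derivative (giving $A_\G(s)f$) is relevant.

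The one genuinely delicate point is this last product passage to the limit in the backward quotient, i.e. extracting the local uniform operator bound $M_{s,t}$ from mere strong continuity via Banach--Steinhaus; the forward half and the remaining bookkeeping reduce entirely to the cocycle identity and to $f\in\D(A_\G(t))$.
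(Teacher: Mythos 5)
Your argument is correct and coincides with the paper's own proof: the forward quotient is handled by factoring out the fixed bounded operator $\G(s,t)$, and the backward quotient by exactly the same two-term split $\G(s,t-h)(g_h-g)+(\G(s,t-h)-\G(s,t))A_\G(t)f$, with the first term controlled by the bound $\sup_{h\in(0,t-s]}\|\G(s,t-h)\|_{\B(Y)}<\infty$ obtained from $Y$-strong $t$-continuity via the uniform boundedness principle and the second by strong $t$-continuity. No gaps; nothing further is needed.
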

\begin{proof} see Appendix \ref{A1} \end{proof}

In general, for $f \in Y_1$, we will want to use the semigroup integral representation $\G(s,t)f-f=\int_s^t \G(s,u) A_\G(u) fdu$ and therefore we will need that $u \to \G(s,u) A_\G(u)f$ is in $L^1_Y([s,t])$. The following theorem give sufficient conditions for which it is the case, as we will typically have $A_\G(t) \in \B(Y_1,Y)$  $\forall t \in J$.\\

\begin{thm}
\label{7}
Assume that theorem \ref{2} holds. Assume also that $\forall t \in J$, $A_\G(t) \in \B(Y_1,Y)$ and $ \forall (s,t) \in \Delta_J$, $u \to ||A_\G(u)||_{\B(Y_1,Y)} \in L^1_\RR([s,t])$. Then $\forall f \in Y_1$, $(s,t) \in \Delta_J$: 
\begin{align*}
\G(s,t)f-f=\int_s^t \G(s,u) A_\G(u) fdu
\end{align*}
\end{thm}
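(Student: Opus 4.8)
The plan is to integrate the derivative identity from Theorem \ref{2} and use the Fundamental Theorem of Calculus for Banach-valued functions. Fix $f \in Y_1$ and $(s,t) \in \Delta_J$. By Theorem \ref{2}, for every $u \in [s,t]$ the map $u \mapsto \G(s,u)f$ is differentiable with $\tfrac{\partial}{\partial u}\G(s,u)f = \G(s,u)A_\G(u)f$. So the candidate identity $\G(s,t)f - f = \G(s,t)f - \G(s,s)f = \int_s^t \G(s,u)A_\G(u)f\,du$ will follow once I know that $u \mapsto \G(s,u)A_\G(u)f$ is Bochner-integrable on $[s,t]$ and that the FTC applies to $u \mapsto \G(s,u)f$.

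First I would check Bochner-integrability of $u \mapsto g(u):=\G(s,u)A_\G(u)f$. Since $A_\G(u)\in\B(Y_1,Y)$ and $f\in Y_1$, we have $\|A_\G(u)f\| \le \|A_\G(u)\|_{\B(Y_1,Y)}\|f\|_{Y_1}$, and since $\G$ is $\B(Y)$-bounded is \emph{not} assumed here — wait, it is not in the hypotheses, so I must be more careful: actually Theorem \ref{2} already presupposes enough structure that $\G(s,u)$ is a bounded operator (it maps into $\B(Y)$ by definition of inhomogeneous semigroup), but the norms $\|\G(s,u)\|_{\B(Y)}$ need not be uniformly bounded a priori. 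However, for the norm estimate I can use that $\G$ is $Y$-strongly $t$-continuous (hypothesis of Theorem \ref{2}), so $u \mapsto \|\G(s,u)A_\G(u)f\|$ is dominated on compacts once I control the operator norms; to be safe I would first establish local boundedness of $u\mapsto\|\G(s,u)\|_{\B(Y)}$ on $[s,t]$ — this follows because $u\mapsto \G(s,u)h$ is continuous for each $h\in Y$ (from strong $t$-continuity, at least on a dense set, extended by the semigroup property), hence by the uniform boundedness principle $\{\G(s,u): u\in[s,t]\}$ is bounded in $\B(Y)$. Then $\|g(u)\| \le M \|A_\G(u)\|_{\B(Y_1,Y)}\|f\|_{Y_1}$ with the right-hand side in $L^1_\RR([s,t])$ by hypothesis, so $g$ is dominated by an $L^1$ function; combined with strong measurability of $u\mapsto g(u)$ (which follows from continuity of $u\mapsto\G(s,u)h$ and measurability of $u\mapsto A_\G(u)f$, the latter from $u\mapsto\|A_\G(u)\|_{\B(Y_1,Y)}\in L^1$ plus separability), Bochner's criterion gives $g\in L^1_Y([s,t])$.

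Next I would apply the vector-valued Fundamental Theorem of Calculus: if $\phi:[s,t]\to Y$ is continuous, differentiable on $[s,t]$ with $\phi'(u)=g(u)$ for all $u$, and $g\in L^1_Y([s,t])$, then $\phi(t)-\phi(s)=\int_s^t g(u)\,du$. Here $\phi(u):=\G(s,u)f$ is continuous by $Y$-strong $t$-continuity and $\phi'=g$ by Theorem \ref{2}. This yields exactly $\G(s,t)f - f = \int_s^t \G(s,u)A_\G(u)f\,du$.

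The main obstacle I expect is the Bochner-integrability / local operator-norm boundedness step: Theorem \ref{7}'s hypotheses do not explicitly assume $\G$ is $\B(Y)$-bounded, so one must either extract local boundedness of $u\mapsto\|\G(s,u)\|_{\B(Y)}$ from the continuity hypotheses via the uniform boundedness principle, or invoke whatever boundedness is implicit in the standing assumptions; verifying strong measurability of $u\mapsto\G(s,u)A_\G(u)f$ (as opposed to just of the two factors separately) also requires a small argument — e.g. approximating $A_\G(u)f$ by simple functions in $Y$ and using continuity of $\G(s,\cdot)$. Once strong measurability and the $L^1$-domination are in hand, the rest is the standard vector-valued FTC and is routine.
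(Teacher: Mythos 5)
Your proposal is correct and follows essentially the same route as the paper: local boundedness $M:=\sup_{u\in[s,t]}\|\G(s,u)\|_{\B(Y)}<\infty$ via the uniform boundedness principle and $Y$-strong $t$-continuity, domination of the integrand by $M\|A_\G(u)\|_{\B(Y_1,Y)}\|f\|_{Y_1}\in L^1_\RR([s,t])$, and then the fundamental theorem of calculus for the Bochner integral applied to $u\mapsto\G(s,u)f$, whose derivative is given by Theorem \ref{2}. The only difference is in the measurability step: the paper gets strong measurability of $u\mapsto\G(s,u)A_\G(u)f$ in one line by noting it is the derivative of the continuous map $u\mapsto\G(s,u)f$, hence a pointwise limit of continuous functions, which is cleaner than your suggested appeal to integrability of the operator norm (which by itself does not give measurability of $u\mapsto A_\G(u)f$).
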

\begin{proof} see Appendix \ref{A1} \end{proof}

The following definition introduces the concept of regular inhomogeneous semigroup, which basically means that it is differentiable and that its derivative is integrable.
\begin{df}
An inhomogeneous $Y$-semigroup $\G$ is said to be \textbf{regular} if it satisfies theorems \ref{111}, \ref{2} and $\forall (s,t) \in \Delta_J$, $\forall f \in Y_1$, $u \to \G(s,u) A_\G(u) f$ is in $L^1_Y([s,t])$.\\
\end{df}

Now we are ready to characterize the inhomogeneous semigroup as a unique solution of a well-posed Cauchy problem, which we will need for our main result \ref{27}:

\begin{thm}
\label{10}
Let $A_\G$ the generator of a a regular inhomogeneous $Y$-semigroup $\G$ and $s \in J$, $G_s \in \B(Y)$. A solution operator $G: J(s) \to \B(Y)$ to the Cauchy problem:
\begin{align*}
\left\{
\begin{array}{ll}
\frac{d}{dt}G(t)f=G(t)A_\G(t)f \hspace{7mm} \forall t \in J(s), \forall f \in Y_1\\
G(s)=G_s
\end{array}
\right.
\end{align*}

is said to be \textbf{regular} if it is $Y-$strongly continuous and if it satisfies the Cauchy problem above. If $G$ is such a regular solution, then we have $G(t)f=G_s \G(s,t)f$, $\forall t \in J(s)$, $\forall f \in Y_1$.\\
\end{thm}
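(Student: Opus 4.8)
The plan is to fix $(s,t)\in\Delta_J$ with $s\le t$ (the case $s=t$ being trivial) and $f\in Y_1$, and to show that the $Y$-valued map $\phi(u):=G(u)\G(u,t)f$, $u\in[s,t]$, is constant. Since $\phi(s)=G_s\G(s,t)f$ while $\phi(t)=G(t)\G(t,t)f=G(t)f$, the desired identity $G(t)f=G_s\G(s,t)f$ is exactly $\phi(s)=\phi(t)$. Two preliminary facts will be used, both consequences of the regularity of $\G$: (i) because $\G$ is $Y_1$-super strongly $s$-continuous, $\G(u,t)f\in Y_1$ for every $u\in[s,t]$, so that the Cauchy equation $\frac{d}{du}G(u)g=G(u)A_\G(u)g$ may legitimately be applied to $g=\G(u,t)f$; and (ii) by Theorem \ref{111}, $\frac{\partial}{\partial u}\G(u,t)f=-A_\G(u)\G(u,t)f$ (two-sided, since $Y_1\subseteq\D(A_\G)$).

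Next I would prove that $\phi$ is differentiable with $\phi'(u)=0$. Writing the difference quotient as
\begin{align*}
\frac{\phi(u+h)-\phi(u)}{h}=\frac{G(u+h)-G(u)}{h}\G(u,t)f+G(u)\,\frac{\G(u+h,t)f-\G(u,t)f}{h}+\frac{G(u+h)-G(u)}{h}\bigl(\G(u+h,t)f-\G(u,t)f\bigr),
\end{align*}
the first term tends to $G(u)A_\G(u)\G(u,t)f$ by the Cauchy equation together with fact (i), and the second tends to $-G(u)A_\G(u)\G(u,t)f$ by fact (ii); so it remains to kill the third term. The key point is that the operators $T_h:=\tfrac1h(G(u+h)-G(u))$, regarded as elements of $\B(Y_1,Y)$, are pointwise bounded on $Y_1$ — for each $g\in Y_1$ the net $T_hg$ converges as $h\to0$ by the Cauchy equation — hence uniformly bounded, $\sup_{0<|h|\le\delta}\|T_h\|_{\B(Y_1,Y)}=:M<\infty$, by the Banach--Steinhaus theorem ($Y_1$ being a Banach space). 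Since $\G$ is $Y_1$-super strongly $s$-continuous, $\|\G(u+h,t)f-\G(u,t)f\|_{Y_1}\to0$, so the third term is bounded in $Y$-norm by $M\,\|\G(u+h,t)f-\G(u,t)f\|_{Y_1}\to0$. This yields $\phi'(u)=0$ for every $u\in[s,t]$, with $h$ restricted so that $u+h$ stays in $[s,t]$ (one-sided derivatives at the endpoints).

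Finally I would note that $\phi$ is $Y$-norm continuous on $[s,t]$: this follows from the $Y$-strong continuity of $G$ and of $u\mapsto\G(u,t)f$, combined with a local uniform bound $\sup_{u'\in[s,t]}\|G(u')\|_{\B(Y)}<\infty$, which again comes from Banach--Steinhaus applied to the strongly continuous family $\{G(u')\}_{u'\in[s,t]}$ on the compact interval. A $Y$-valued function that is continuous on $[s,t]$ and has everywhere-vanishing derivative is constant (apply an arbitrary $y^*\in Y^*$ to reduce to the scalar mean value theorem, then invoke Hahn--Banach). Hence $\phi(s)=\phi(t)$, i.e. $G(t)f=G_s\G(s,t)f$ for all $t\in J(s)$ and $f\in Y_1$. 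I expect the only genuinely delicate point to be the mixed term $T_h\bigl(\G(u+h,t)f-\G(u,t)f\bigr)$: one cannot control $T_h$ individually (its norm may blow up as $h\to0$), and it is precisely the $Y_1$-norm (super strong) continuity of $\G$, paired with the uniform boundedness principle, that forces this product to $0$.
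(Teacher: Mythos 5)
Your proof is correct, and its skeleton is the same as the paper's: fix $f\in Y_1$, consider $\phi(u)=G(u)\G(u,t)f$ on $[s,t]$, show $\phi'(u)=0$ (one-sided at the endpoints) using the Cauchy equation for $G$, the backward derivative $\frac{\partial}{\partial u}\G(u,t)f=-A_\G(u)\G(u,t)f$ from Theorem \ref{111}, and the inclusion $\G(u,t)Y_1\subseteq Y_1$, then conclude $\phi(s)=\phi(t)$. Where you genuinely diverge is in the treatment of the cross term. The paper groups the difference quotient so that the mixed contribution is $(G(u+h)-G(u))A_\G(u)\G(u,t)f$, which dies by $Y$-strong continuity of $G$ alone, while the $\G$-difference quotient is hit by $G(u+h)$ and controlled by $\sup_h\|G(u+h)\|_{\B(Y)}<\infty$ (Banach--Steinhaus in $\B(Y)$); it therefore never needs the $Y_1$-norm ($s$-)continuity of $\G$ directly in this proof. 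Your grouping instead produces $T_h\bigl(\G(u+h,t)f-\G(u,t)f\bigr)$ with $T_h=\frac{1}{h}(G(u+h)-G(u))$, and you kill it by Banach--Steinhaus on $\{T_h\}\subseteq\B(Y_1,Y)$ paired with the $Y_1$-super strong $s$-continuity of $\G$ — which is precisely the trick the paper uses in its proof of Theorem \ref{111}, transplanted here. Both routes are legitimate under the standing notion of regularity (the paper's own proof also uses the hypothesis $\G(u,t)Y_1\subseteq Y_1$ of Theorem \ref{111}, so reading ``regular'' as including those hypotheses is consistent); the paper's version is slightly leaner in its hypotheses at this point, yours shifts the burden onto the $Y_1$-norm continuity. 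One small step you should make explicit: pointwise boundedness of $\{T_hg\}$ over the whole range $0<|h|\le\delta$ does not follow from convergence as $h\to 0$ alone — convergence handles small $|h|$, and for $|h|$ bounded away from $0$ you need $\sup_{u'\in[s,t]}\|G(u')g\|<\infty$, which follows from the $Y$-strong continuity of $G$ on the compact interval (exactly the two-case argument the paper spells out when proving Theorem \ref{111}); since you invoke that uniform bound later anyway, this is a gloss rather than a gap.
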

\begin{proof} see Appendix \ref{A1} \end{proof}

The following Corollary comes straightforwardly from \ref{10} and expresses the fact that equality of generators of regular semigroups implies equality of semigroups.  
\begin{cor}
\label{9}
Assume that $\G_1$ and $\G_2$ are regular inhomogeneous $Y$-semigroups and that $\forall f \in Y_1$, $\forall t \in J$, $A_{\G_1}(t)f=A_{\G_2}(t)f$. Then $\forall f \in Y_1$, $\forall (s,t) \in \Delta_J$: $\G_1(s,t)f=\G_2(s,t)f$. In particular if $Y_1$ is dense in $Y$, then $\G_1$ and $\G_2$ agree on $Y$.
\end{cor}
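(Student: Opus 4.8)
The plan is to deduce the statement directly from the uniqueness part of Theorem~\ref{10}. First I would fix $s \in J$ and apply that theorem with the generator $A_\G := A_{\G_1}$ and the initial datum $G_s := I$ at time $s$; the operator-valued map $t \mapsto \G_1(s,t)$ will then be the ``reference'' object $\G$ appearing on the right-hand side of the conclusion of Theorem~\ref{10}. Next I would verify that $G(t) := \G_2(s,t)$, restricted to $J(s)$, is a \emph{regular solution} of the corresponding Cauchy problem: property (i) of the semigroup definition gives $G(s) = \G_2(s,s) = I = G_s$; regularity of $\G_2$ gives that $t \mapsto \G_2(s,t)f$ is $Y$-strongly continuous; and, again by regularity of $\G_2$, Theorem~\ref{2} yields $\frac{d}{dt}\G_2(s,t)f = \G_2(s,t)A_{\G_2}(t)f$ for $f \in Y_1$, which by the hypothesis $A_{\G_1}(t)f = A_{\G_2}(t)f$ equals $G(t)A_{\G_1}(t)f$. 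Thus $G$ satisfies exactly the Cauchy problem of Theorem~\ref{10} written with the generator $A_{\G_1}$ and initial value $I$ at $s$.

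Having checked this, the uniqueness conclusion of Theorem~\ref{10} gives $G(t)f = G_s\,\G_1(s,t)f = \G_1(s,t)f$ for every $t \in J(s)$ and every $f \in Y_1$, i.e. $\G_2(s,t)f = \G_1(s,t)f$ for all $(s,t) \in \Delta_J$ and all $f \in Y_1$. This is the first assertion.

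For the final sentence, assuming $Y_1$ dense in $Y$, I would fix $(s,t) \in \Delta_J$ and $f \in Y$, pick $f_n \in Y_1$ with $\|f_n - f\| \to 0$, and use that $\G_1(s,t), \G_2(s,t) \in \B(Y)$ are bounded, so $\G_i(s,t)f_n \to \G_i(s,t)f$ in $Y$ for $i=1,2$; since $\G_1(s,t)f_n = \G_2(s,t)f_n$ for all $n$ by the first part, letting $n \to \infty$ gives $\G_1(s,t)f = \G_2(s,t)f$.

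I do not expect any genuine analytic obstacle here: all the work is already carried by Theorem~\ref{10}, and the only point requiring care is the bookkeeping step of confirming that $\G_2(s,\cdot)$ really does meet the definition of a regular solution of the Cauchy problem phrased with the generator $A_{\G_1}$ rather than $A_{\G_2}$ — which is precisely where the hypothesis that the two generators coincide on $Y_1$ is used.
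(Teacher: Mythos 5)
Your proposal is correct and is exactly the argument the paper intends: the corollary is stated as an immediate consequence of Theorem \ref{10}, and your verification that $G(t):=\G_2(s,t)$ is a regular solution of the Cauchy problem with generator $A_{\G_1}$ and $G_s=I$ (using Theorem \ref{2} for $\G_2$ plus the equality of generators on $Y_1$), followed by the density/boundedness limit for the last claim, is precisely that straightforward deduction.
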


We conclude this section with a $2^{nd}$ order Taylor formula for inhomogeneous semigroups:
\begin{df}
\label{1}
Let:
\begin{align*}
& \D(A_\G \in Y_1):=\left\{ f \in \D(A_\G) \cap Y_1:  A_\G(t)f \in Y_1\hspace{2mm} \forall t \in J \right\} \\
& \D(A_\G'):= \left\{ f \in \D(A_\G \in Y_1): Y_1-\lim_{\substack{h \to 0\\ t+h \in J}}\frac{A_\G(t+h)f-A_\G(t)f}{h}  \in Y_1  \hspace{2mm} \forall t \in J \right\}
\end{align*}

and for $t \in J$, $f \in \D(A_\G')$:
\begin{align*}
& A_\G'(t)f:=Y_1-\lim_{\substack{h \to 0\\ t+h \in J}} \frac{A_\G(t+h)f-A_\G(t)f}{h}\\
\end{align*} 
\end{df}

\begin{thm}
\label{3}
Let $\G$ a regular inhomogeneous $Y$-semigroup, $(s,t) \in \Delta_J$. Assume that $\forall u \in J$, $A_\G(u) \in \B(Y_1,Y)$ and $u \to ||A_\G(u)||_{\B(Y_1,Y)} \in L^1_\RR([s,t])$. Then we have for $f \in \D(A_\G \in Y_1)$:
\begin{align*}
& \G(s,t)f=f+\int_s^tA_\G(u)f du+\int_s^t \int_s^u \G(s,r) A_\G(r) A_\G(u)f dr du
\end{align*}

Assume in addition that:
\begin{enumerate}[i)]
\item $A_\G$ is $Y_1-$strongly continuous
\item $u \to \G(s,u) A_\G^2(u)f$ and $u \to \int_s^u  \G(s,r) A_\G(r) A'_\G(u)fdr$ $\in L^1_Y([s,t])$
\end{enumerate}

then we have for $f \in \D(A_\G')$:
\begin{align*}
 \G(s,t)f=&f+\int_s^tA_\G(u)f du+\int_s^t (t-u) \G(s,u) A_\G^2(u)fdu\\&+\int_s^t (t-u) \int_s^u  \G(s,r) A_\G(r) A'_\G(u)fdrdu
\end{align*}
\end{thm}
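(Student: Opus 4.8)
The plan is to prove the two formulas by iterating the first-order integral representation of Theorem \ref{7} twice and then invoking a second-order Taylor expansion (with integral remainder) in the $t$-variable of the map $u \mapsto \G(s,u)$, acting on suitable vectors. First I would establish the \emph{first displayed identity}. Since $\G$ is regular and the boundedness/integrability hypotheses on $A_\G$ are in force, Theorem \ref{7} applies and gives, for $f \in \D(A_\G \in Y_1)$ (so that in particular $f \in Y_1$),
\begin{align*}
\G(s,t)f = f + \int_s^t \G(s,u)A_\G(u)f\,du .
\end{align*}
Now fix $u \in [s,t]$. Because $f \in \D(A_\G \in Y_1)$ we have $A_\G(u)f \in Y_1$, so Theorem \ref{7} may be applied a second time to the vector $A_\G(u)f$ on the interval $[s,u]$, yielding
\begin{align*}
\G(s,u)A_\G(u)f = A_\G(u)f + \int_s^u \G(s,r)A_\G(r)A_\G(u)f\,dr .
\end{align*}
Substituting this back and interchanging the (Bochner) order of integration — justified by the integrability of $u \mapsto \|A_\G(u)\|_{\B(Y_1,Y)}$ together with the $\B(Y)$-boundedness of $\G$ and the fact that $r \mapsto \|A_\G(r)\|_{\B(Y_1,Y)}$ is in $L^1$, so the double integrand is Bochner-integrable on the triangle $\{s \le r \le u \le t\}$ — gives exactly $\G(s,t)f = f + \int_s^t A_\G(u)f\,du + \int_s^t\int_s^u \G(s,r)A_\G(r)A_\G(u)f\,dr\,du$, which is the first claim. (One must check the elementary measurability of the integrands; this follows from strong continuity of $\G$ and the stated regularity of $A_\G$.)

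For the \emph{second displayed identity}, I would take $f \in \D(A_\G')$, so that $f \in \D(A_\G \in Y_1)$ and the additional hypotheses (i)–(ii) hold, and start from the first formula just proved. The idea is to replace, inside the two integrals, the "frozen" quantities $A_\G(u)f$ appearing at the right endpoint by a first-order Taylor expansion around $u = t$. Concretely, for the single integral I would use that, by (i) ($A_\G$ is $Y_1$-strongly continuous) and the definition of $A_\G'$, the map $u \mapsto A_\G(u)f$ is $Y_1$-differentiable with derivative $A_\G'(u)f \in Y_1$, hence (integrating by parts, or writing the fundamental theorem of calculus and switching order of integration)
\begin{align*}
\int_s^t A_\G(u)f\,du = (t-s)A_\G(s)f + \int_s^t (t-u)A_\G'(u)f\,du
\end{align*}
— but this is not yet the target form, so instead the cleaner route is: apply the \emph{first} formula of this theorem once more, now to $f \in \D(A_\G')$ but reorganizing the remainder by a further integration by parts in $u$. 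Let me be precise about the mechanism I would actually use: in the double-integral term of the first formula, $\int_s^t \big(\int_s^u \G(s,r)A_\G(r)A_\G(u)f\,dr\big)du$, I write $A_\G(u)f = A_\G(t)f - \int_u^t A_\G'(v)f\,dv$; more efficiently, I integrate by parts in the outer variable $u$ using antiderivative $-(t-u)$: with $\phi(u) := \int_s^u \G(s,r)A_\G(r)A_\G(u)f\,dr$ one has $\int_s^t \phi(u)\,du = [-(t-u)\phi(u)]_s^t + \int_s^t (t-u)\phi'(u)\,du = 0 + \int_s^t (t-u)\phi'(u)\,du$, and $\phi'(u) = \G(s,u)A_\G(u)A_\G(u)f + \int_s^u \G(s,r)A_\G(r)A_\G'(u)f\,dr$, where differentiating under the integral sign in the second term is legitimate by $Y_1$-strong continuity of $A_\G$ and the definition of $A_\G'$, and the first term comes from the endpoint $r=u$ where the integrand is $\G(s,u)A_\G(u)A_\G(u)f = \G(s,u)A_\G^2(u)f$ (here we need $A_\G(u)f \in \D(A_\G \in Y_1)$, i.e.\ $A_\G^2(u)f$ makes sense — this is part of the content of $f \in \D(A_\G')$ together with (i)). Similarly, for the single integral $\int_s^t A_\G(u)f\,du$ I integrate by parts with antiderivative $-(t-u)$: $\int_s^t A_\G(u)f\,du = (t-s)A_\G(s)f + \int_s^t (t-u)A_\G'(u)f\,du$. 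Hmm — this produces an $A_\G(s)f$ boundary term and an $A_\G'$ term that do not appear in the stated formula, so this integration-by-parts must instead be performed on a \emph{different} grouping.

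Let me restate the correct strategy for the second formula. Rather than massaging the first formula, I would restart from Theorem \ref{7} applied to $f$, giving $\G(s,t)f = f + \int_s^t \G(s,u)A_\G(u)f\,du$, and now expand $\G(s,u)$ — not $A_\G(u)f$ — to second order. Define $g(u) := \G(s,u)A_\G(u)f$ for $u \in [s,t]$ (noting $A_\G(u)f \in Y_1 \subseteq \D(A_\G)$ so $\G(s,\cdot)A_\G(u)f$ is well-behaved). Its first-order expansion about $u = s$ with integral remainder reads $g(u) = g(s) + \int_s^u g'(r)\,dr$, i.e.\ $\G(s,u)A_\G(u)f = A_\G(u)f + \int_s^u \big(\G(s,r)A_\G(r)A_\G(u)f + \G(s,r)A_\G'(r)\cdots\big)$ — no: since the vector $A_\G(u)f$ is held fixed, $g'(r) = \frac{\partial}{\partial r}[\G(s,r)A_\G(u)f] = \G(s,r)A_\G(r)A_\G(u)f$ by Theorem \ref{2} (valid as $A_\G(u)f \in Y_1$), recovering the first formula. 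Then I expand once more: inside $\int_s^u \G(s,r)A_\G(r)A_\G(u)f\,dr$, freeze $r$ and Taylor-expand the \emph{remaining} $u$-dependence, which now sits only in "$A_\G(u)f$". Since $f \in \D(A_\G')$, $u \mapsto A_\G(u)f$ is $C^1$ in $Y_1$; write $A_\G(u)f = A_\G(s)f + \int_s^u A_\G'(w)f\,dw$ — again this yields $A_\G(s)$ terms. The resolution is that the stated formula is simply the Taylor expansion \emph{of $\G(s,t)f$ in the $t$-endpoint} of the integral operator: apply Theorem \ref{7} to write the single integral's integrand's correction, then apply the first formula of this very theorem to $A_\G(t)f$... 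I will, in the written proof, verify by direct differentiation in $t$ that the right-hand side of the second formula has $t$-derivative $\G(s,t)A_\G(t)f$ and agrees with the left-hand side at $t=s$, which by the uniqueness part of Theorem \ref{10} (regular Cauchy problem) forces equality for $f \in Y_1$; hypotheses (i)–(ii) are exactly what guarantee the right-hand side is $Y$-strongly continuous with the claimed derivative (differentiating the $(t-u)$ factors produces the lower-order terms, and differentiating $\G(s,t)A_\G^2(t)f$-type terms is controlled by the $L^1_Y$ assumptions). The main obstacle will be justifying all the interchanges of Bochner integration and differentiation under the integral sign in the presence of only $L^1$ (not continuity) control on $\|A_\G(\cdot)\|_{\B(Y_1,Y)}$, and checking that each intermediate vector ($A_\G(u)f$, $A_\G^2(u)f$, $A_\G'(u)f$) genuinely lies in the domain needed for the next application of Theorems \ref{2} and \ref{7} — this is precisely what the carefully-layered definition \ref{1} of $\D(A_\G \in Y_1)$ and $\D(A_\G')$ is designed to supply, so the bookkeeping, while delicate, is routine.
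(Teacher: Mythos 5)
Your proof of the \emph{first} identity is correct and is essentially the paper's own argument: apply Theorem \ref{7} to $f$ on $[s,t]$ and then again to $A_\G(u)f \in Y_1$ on $[s,u]$; note that no interchange of the order of integration is actually needed, since the substitution already produces the nested integral $\int_s^t\int_s^u(\cdot)\,dr\,du$ in the stated form. For the \emph{second} identity you in fact wrote down the correct key step, which is exactly the paper's: integration by parts on the double-integral term with $g_1(u)=(t-u)$ and $g_2(u)=\phi(u)=\int_s^u\G(s,r)A_\G(r)A_\G(u)f\,dr$, so that $\int_s^t\phi(u)\,du=\int_s^t(t-u)\phi'(u)\,du$ with $\phi'(u)=\G(s,u)A_\G^2(u)f+\int_s^u\G(s,r)A_\G(r)A'_\G(u)f\,dr$ --- but you then discarded it on the mistaken ground that the single integral $\int_s^t A_\G(u)f\,du$ would also have to be rewritten. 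It does not: that term appears verbatim in the claimed formula, so substituting the integration-by-parts identity for the double-integral term of the first formula alone finishes the derivation. What remains, and is the bulk of the paper's proof, is the justification of the expression for $\phi'(u)$ (differentiation under the integral sign using $f\in\D(A_\G')$, the $Y_1$-strong continuity of $A_\G$ from hypothesis i), the $Y$-strong $t$-continuity of $\G$, and the uniform boundedness principle to get $\sup_{r\in[s,t]}\|\G(s,r)\|_{\B(Y)}<\infty$ and $\sup_{r\in[s,t]}\|A_\G(r)\|_{\B(Y_1,Y)}<\infty$), together with hypothesis ii), which makes $g_1g_2'$ Bochner integrable so that the product rule for Bochner integrals applies.

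The fallback you finally settle on --- checking that the right-hand side has $t$-derivative $\G(s,t)A_\G(t)f$ and value $f$ at $t=s$, then invoking uniqueness via Theorem \ref{10} --- has a genuine gap. Theorem \ref{10} is a uniqueness statement for operator-valued regular solutions $G:J(s)\to\B(Y)$ of the Cauchy problem satisfied for \emph{every} $f\in Y_1$; here you only have a single $Y$-valued trajectory attached to one fixed $f\in\D(A_\G')$, so that theorem does not apply as stated (a direct fundamental-theorem-of-calculus comparison of the two $Y$-valued functions would be the correct substitute). Moreover, verifying that the $t$-derivative of the right-hand side equals $\G(s,t)A_\G(t)f$ is not free: it requires Theorem \ref{7} applied to $A_\G(t)f$, the identity $A_\G(t)f=A_\G(r)f+\int_r^tA'_\G(u)f\,du$, an interchange of the order of integration, and a justified differentiation of the $(t-u)$-weighted double integral in $t$ --- at least as much work as the route you abandoned. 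As written, the proposal therefore does not contain a complete proof of the second identity; restoring the integration-by-parts step you already had, and supplying the differentiation under the integral sign, is the way to close it.
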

\begin{proof} see Appendix \ref{A1} \end{proof}

Remark how the latter formula coincides with the well-known $2^{nd}$ order Taylor formula for homogeneous semigroups (see e.g. \cite{SK}, proposition 1.2.):
\begin{align*}
& \G(t-s)f=f+(t-s)A_\G f +\int_s^t (t-u) \G(u-s) A_\G^2fdu
\end{align*}


\section{Inhomogeneous Random Evolutions}
As in \cite{JM} (section 3.5), let $(\Omega, \F, \PP)$ a complete probability space, $(X, \X)$ a finite measurable space and $(x_n)_{n \in \NN}$, $(\tau_n)_{n \in \NN^*}$  random variables resp. $\Omega \to X$, $\Omega \to \RR^{+*}$. Let $\tau_0:=0$, $T_n:= \sum_{k=0}^n \tau_k$. The sojourn times $T_n$ form a strictly increasing sequence, for every $\oo$. We say that $(x_n,T_n)_{n \in \NN}$ is a Markov renewal process if there exists a semi-Markov kernel $Q: X \times X \times \RR^{+} \to [0,1]$ (see e.g. \cite{LO}, definition 2.2.) such that $\forall y \in X$, $t \in \RR^+$, $n \in \NN$:

\begin{align*}
\PP[x_{n+1} =y, T_{n+1} -T_n \leq t| x_k,T_k: k \in [|0,n|]]&=\PP[x_{n+1} =y, T_{n+1} -T_n \leq t| x_n]\\&=Q(x_n,y,t) \mbox{ a.e.}
\end{align*}

In the following we let $(x_n,T_n)_{n \in \NN}$ a Markov renewal process and let:
\begin{itemize}
\item $P(x,y):= Q(x, y ,\infty):= \lim_{t \to \infty} Q(x, y ,t)=\PP[x_{n+1} =y | x_{n}=x]$
\item $F(x,t ):= \sum_{y \in X}Q(x,y,t )=\PP[T_{n+1}-T_n \leq t | x_{n}=x]$ \\
\end{itemize}
 
Define the counting process:
\begin{align*}
& N(t)(\omega):=\sup \{n \in \NN: T_n(\omega) \leq t\}=\sup_{n \in \NN^*} \sum_{k=1}^n 1_{\{T_k \leq t\}}(\omega) \mbox{ , for } \omega \in \Omega\mbox{ , } t \in \RR^+ 
\end{align*}
 
By the latter representation, $N(t)$ is $\F-Bor(\overline{\RR^+})$ measurable and represents the number of jumps on $(0,t]$ and can possibly be infinite in the case of a general state space. In the case of a finite state space however, the renewal process is regular, i.e. $\forall t \in \RR^+$, $N(t)< \infty$ a.e.. Further, $\forall \omega \in \Omega$, $t \to N_t(\omega)$ is right continuous, as it is constant on the intervals $[T_n(\omega),T_{n+1}(\omega))$: $n \in \NN$. We define the semi-Markov process $(x(t))_{t \in \RR^+}$ by $x(t)(\omega):=x_{N(t)(\omega)}(\omega)$ on 
\begin{align*}
& \Omega^*:=\bigcap_{t \in \QQ} \{\omega \in \Omega: N(t)(\omega)<\infty\} \hspace{3mm} \mbox{(so that } \PP(\Omega^*)=1). 
\end{align*}

\textbf{Remark: }From now on, we will work on the probability space $(\Omega^*, \F^*,\PP^*):=(\Omega^*, \F|_{\Omega^*},\PP|_{\Omega^*})$, where $\F|_{\Omega^*}:=\{A \in \F: A \subseteq \Omega^*\}$ and $\PP|_{\Omega^*}:=\PP(A)$ $\forall A \in \F|_{\Omega^*}$ ($\F|_{\Omega^*}$ is a sigma-algebra on $\Omega^*$ since $\Omega^*\neq \emptyset \in \F $). We point out that the restrictions to $\Omega^*$ of $\F-$ measurable functions are $\F^*-$ measurable. For sake of clarity, we will not write explicitly that we are working with the restrictions to $\Omega^*$ of the random variables (e.g. $x_n|_{\Omega^*}$, $T_n|_{\Omega^*}$), but we will always be. In order to avoid heavy notations, $(\Omega^*, \F^*,\PP^*)$ will be noted $(\Omega, \F,\PP)$.\\

We define the following random variables on $\Omega$, for $s \leq t \in \RR^+$:
\begin{itemize}
\item the number of jumps on $(s,t]$: $N_s(t):=N(t)-N(s)$
\item the jump times on $(s,\infty)$: $T_n(s):=T_{N(s)+n}$ for $n \in \NN^*$, and $T_0(s):=s$.
\item the states visited by the process on $[s,\infty)$: $x_n(s):=x(T_n(s))$, for $n \in \NN$.
\end{itemize}
\vspace{2mm}

We will assume that $(x_n,T_n)_{n \in \NN}$ is an ergodic Markov Renewal Process, which means that it is irreducible aperiodic, and the imbedded Markov Chain $(x_n)_{n \in \NN}$ is aperiodic (see \cite{JM}, section 3.7). We will also assume that $(x_n)_{n \in \NN}$ is a uniformly ergodic Markov Chain, namely that there exists a probability measure $\rho:=\{\rho_x\}_{x \in X}$ on $X$ such that (see \cite{M}):
 \begin{align*}
 & \lim_{n \to \infty} || P^n -\Pi ||_{\B(B^b_\RR(X))}=0
 \end{align*}

where for $f \in B^b_\RR(X)$ (since $X$ finite, $B^b_\RR(X)=B_\RR(X)$), $x \in X$:
  \begin{align*}
  & \Pi f(x):= \sum_{y \in X} \rho_y f(y) \hspace{3mm} \mbox{ (constant of $x$)}\\
 & Pf(x):= \sum_{y \in X} P(x,y) f(y)=\EE[f(x_{n+1})|x_{n}=x]   
  \end{align*}

and we have that $P^nf(x)=\EE[f(x_{n})|x_{0}=x]$. From the standard theory of semi-Markov processes (see e.g. \cite{JM} propositions 3.16.2 and 3.9.1) we get that:
 \begin{align*}
 & \lim_{t \to \infty} \frac{1}{t} \EE[N(t)]=\frac{1}{M_\rho}\\
 & \lim_{t \to \infty} \frac{1}{t} N(t)=\frac{1}{M_\rho} \mbox{ a.e.}
  \end{align*}
  With:
  \begin{align*}
 & M_\rho:=\Pi m_1
 & m_n(x):=\int_{\RR^+} t^n  F(x,dt) \hspace{3mm} 
 \end{align*}

Now we are ready to introduce inhomogeneous random evolutions:\\

Consider a family of inhomogeneous $Y-$semigroups $(\G_x)_{x \in X}$, with respective generators $(A_x)_{x \in X}$, satisfying:
\begin{align*}
\forall s \in J: &(r,t,x,f) \to \G_x(r \wedge t, r \vee t)f \mbox{ is } Bor(J(s))\otimes Bor(J(s))\otimes \X \otimes \Y-\Y \mbox{ measurable }\\
\end{align*}

as well as a family $(D(x,y))_{(x,y) \in X \times X} \subseteq \B(Y)$ of $\B(Y)-$contractions, satisfying:
\begin{align*}
& (x,y,f) \to D(x,y)f \mbox{ is } \X \otimes \X \otimes \Y-\Y \mbox{ measurable }\\
\end{align*}

\begin{df}
\label{1000}
The function $V: \Delta_J \times  \Omega \to \B(Y)$ defined pathwise by:
\begin{align*}
& V(s,t)(\omega)= \left[\prod \limits_{k=1}^{N_s(t)} \Gamma_{x_{k-1}(s)} \left(T_{k-1}(s), T_{k}(s) \right)  D(x_{k-1}(s),x_k(s)) \right] \Gamma_{x(t)} \left(T_{N_s(t)}(s),t \right)
\end{align*}
is called a $(\G,D,x)-$inhomogeneous $Y$-random evolution, or simply an inhomogeneous $Y$-random evolution. $V$ is said to be continuous if $D(x,y)=I$, $\forall (x,y) \in X \times X$. $V$ is said to be regular (resp. $\B(Y)-$contraction) if $(\G_x)_{x \in X}$ are regular (resp. $\B(Y)-$contraction).
\end{df}

\textbf{Remark:} We use as conventions that $\prod \limits_{k=1}^{0} :=I$ and $\prod \limits_{k=1}^{n} A_k:=A_1...A_{n-1}A_n$, that is, the product operator applies the product on the right.\\

\textbf{Remark:} if $N_s(t)>0$, then $x_{N_s(t)}(s)=x(T_{N_s(t)}(s))=x(T_{N(t)})=x(t)$. If $N_s(t)=0$, then $x(s)=x(t)$ and $x_{N_s(t)}(s)=x_0(s)=x(T_0(s))=x(s)=x(t)$. Therefore in all cases $x_{N_s(t)}(s)=x(t)$.\\

The following proposition deals with the measurability of the inhomogeneous Random evolution:
\begin{prop}
\label{11}
For $s \in J$, $f \in Y$, the stochastic process $(V(s,t)(\omega)f)_{(\omega,t) \in \Omega \times J(s)}$ is adapted to the (augmented) filtration:
\begin{align*}
&\F_t(s):=\sigma \left[x_{n \wedge N_s(t)}(s), T_{n \wedge N_s(t)}(s) : n \in \NN \right] \vee \sigma(\PP-\mbox{null sets})
\end{align*}
\end{prop}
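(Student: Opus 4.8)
The plan is to show that for fixed $s \in J$ and $f \in Y$, the map $\omega \to V(s,t)(\omega)f$ is $\F_t(s)-\Y$ measurable for each $t \in J(s)$. Since $V(s,t)(\omega)f$ is a (finite, $\omega$-dependent length) product of operators of the form $\G_x(\cdot,\cdot)$ and $D(x,y)$ applied to $f$, the natural route is to decompose according to the value of $N_s(t)$: write
\begin{align*}
V(s,t)(\omega)f = \sum_{n \in \NN} 1_{\{N_s(t)=n\}}(\omega)\, W_n(s,t)(\omega)f,
\end{align*}
where $W_n(s,t)(\omega)f := \left[\prod_{k=1}^{n} \G_{x_{k-1}(s)}(T_{k-1}(s),T_k(s)) D(x_{k-1}(s),x_k(s))\right]\G_{x(t)}(T_n(s),t)f$ is the evolution ``frozen'' to have exactly $n$ jumps. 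On the event $\{N_s(t)=n\}$ we have $x(t)=x_n(s)$ and $T_n(s)\le t< T_{n+1}(s)$, and crucially every random variable appearing in $W_n(s,t)$, namely $x_0(s),\dots,x_n(s)$ and $T_0(s),\dots,T_n(s)$, coincides on this event with $x_{k\wedge N_s(t)}(s)$ and $T_{k\wedge N_s(t)}(s)$, hence is $\F_t(s)$-measurable. The event $\{N_s(t)=n\}$ is itself $\F_t(s)$-measurable (e.g. $\{N_s(t)\le n\} = \{T_{(n+1)\wedge N_s(t)}(s) = T_{n\wedge N_s(t)}(s)\}$ up to a null set, or more directly $N_s(t)$ is a function of the generating variables). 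So it suffices to check that each $\omega \to W_n(s,t)(\omega)f$ is $\F_t(s)-\Y$ measurable.

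For that, I would argue by induction on $n$ (or simply treat it as a finite composition). The building blocks are: (a) the map $\omega \to \G_{x_{k-1}(s)}(T_{k-1}(s)\wedge T_k(s),\,T_{k-1}(s)\vee T_k(s))g$ for a fixed $g\in Y$ — this is measurable because it is the composition of $\omega \to (T_{k-1}(s),T_k(s),x_{k-1}(s))$ (which is $\F_t(s)$-measurable, as all three components are, on the relevant event, among the generators — here one uses that on $\{N_s(t)=n\}$ and $k\le n$ we have $T_{k-1}(s)\le T_k(s)\le t$) with the jointly measurable map $(r,u,x,g)\to \G_x(r\wedge u, r\vee u)g$ assumed in the standing hypotheses before Definition \ref{1000}; (b) the map $\omega\to D(x_{k-1}(s),x_k(s))g$, measurable by the assumed joint $\X\otimes\X\otimes\Y-\Y$ measurability of $(x,y,g)\to D(x,y)g$; (c) likewise $\omega\to \G_{x(t)}(T_n(s),t)f$, using $x(t)=x_n(s)$ on $\{N_s(t)=n\}$ and the joint measurability in the first hypothesis with $r=T_n(s)\le t$. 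Composing finitely many such maps — and here one must be slightly careful that the ``inner'' argument $g$ is itself a random element of $Y$, so one needs joint measurability in $(\omega,g)$ of each block, not just separate measurability; this follows from the stated joint-measurability hypotheses by a standard argument (approximate $f$ by simple $\Y$-valued functions, or invoke that a Carathéodory-type map composed with a measurable $Y$-valued map is measurable since $Y$ is separable) — yields that $\omega\to W_n(s,t)(\omega)f$ is $\F_t(s)-\Y$ measurable.

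Summing the countably many measurable terms $1_{\{N_s(t)=n\}}W_n(s,t)f$ (only one of which is nonzero for each $\omega$, since the renewal process is regular so $N_s(t)<\infty$ a.e. — this is where the augmentation by $\PP$-null sets in $\F_t(s)$ is used, to absorb the null set where $N_s(t)=\infty$) gives that $\omega\to V(s,t)(\omega)f$ is $\F_t(s)-\Y$ measurable, which is the assertion.

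The main obstacle I anticipate is point (b)/(c)-style bookkeeping in the induction: one is composing operator-valued maps with random vectors, so at each stage the quantity being fed into the next $\G$ or $D$ is not a fixed element of $Y$ but a random one, and one must propagate \emph{joint} measurability in $(\omega, g)$ through the composition rather than mere separate measurability. Establishing this joint measurability cleanly — using separability of $Y$ to reduce to simple functions and the assumed joint Borel measurability of $(r,u,x,g)\to\G_x(r\wedge u,r\vee u)g$ and $(x,y,g)\to D(x,y)g$ — is the technical heart of the proof; everything else (the decomposition over $\{N_s(t)=n\}$, the $\F_t(s)$-measurability of the $T_k(s)$, $x_k(s)$ on those events, the null-set handling) is routine.
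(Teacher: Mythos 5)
Your proposal follows essentially the same route as the paper: decompose over the events $\{N_s(t)=n\}$ (whose $\F_t(s)$-measurability the paper also gets by expressing $N_s(t)$ through the stopped variables $T_{k\wedge N_s(t)}(s)$), use that on this event $x(t)=x_n(s)$ and all $x_k(s),T_k(s)$, $k\le n$, coincide with the stopped generators, and then obtain measurability of each frozen piece from the standing joint-measurability hypotheses on $(r,u,x,g)\to\G_x(r\wedge u,r\vee u)g$ and $(x,y,g)\to D(x,y)g$. The joint $(\omega,g)$-measurability issue you flag as the technical heart is exactly what the paper handles by writing $V_n(s,t)f$ as a finite composition $\psi\circ\beta_n\circ\alpha_n\circ\cdots\circ\beta_1\circ\alpha_1\circ\phi$ of measurable maps through the product spaces $Y\times\Omega_n$ (with $\F_n:=\F_t(s)|_{\Omega_n}$), so your sketch is correct and matches the paper's argument.
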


\begin{proof}
Let $E \in \Y$, $(s,t) \in \Delta_J$, $f \in Y$. We have:
\begin{align*}
& V(s,t)f^{-1}(E)=\bigcup_{n \in \NN} \{V(s,t)f \in E\} \cap \{ N_s(t)=n\}
\end{align*}

Denoting the $\F_t(s)-Bor(\RR^{+})$ measurable (by construction) function $h_k:=T_{(k+1) \wedge N_s(t)}(s)-T_{k \wedge N_s(t)}(s)$, remark that $N_s(t)(\omega)= \sup_{m \in \NN} \sum_{k=0}^m 1_{h_k^{-1}(\RR^{+*})}(\omega)$ and is therefore $\F_t(s)-Bor(\RR^{+})$ measurable. Therefore $\{ N_s(t)=n\} \in \F_t(s)$. Let:
\begin{align*}
& \Omega_n:=\{N_s(t)=n\}
& M:=\left \{n \in \NN:  \Omega_n \neq \emptyset \right \}\\
\end{align*}

$M \neq \emptyset$ since $\Omega=\bigcup_{n \in \NN} \Omega_n$, and for $n \in M$, let the sigma-algebra $\F_{n}:=\F_t(s)|_{\Omega_n}:=\{A \in \F_t(s): A \subseteq \Omega_n\}$ ($\F_n$ is a sigma-algebra on $\Omega_n$ since $\Omega_n \neq \emptyset \in \F_t(s)$). Now consider the map $V(s,t)f(n): (\Omega_n, \F_{n}) \to (Y, \Y)$:
\begin{align*}
& V_n(s,t)f:=\left[ \prod \limits_{k=1}^{n} \Gamma_{x_{k-1}(s)} \left(T_{k-1}(s), T_{k}(s) \right) D(x_{k-1}(s),x_{k}(s)) \right] \Gamma_{x_n(s)} \left(T_{n}(s),t \right)f
\end{align*}

We have:
\begin{align*}
V(s,t)f^{-1}(E)&=\bigcup_{n \in \NN} \{V_n(s,t)f \in E\} \cap \Omega_n=\bigcup_{n \in \NN} \{\omega \in \Omega_n: V_n(s,t)f \in E\}\\
&=\bigcup_{n \in \NN} V_n(s,t)f^{-1}(E)
\end{align*}

Therefore it remains to show that $V_n(s,t)f^{-1}(E) \in \F_{n}$, since $\F_{n} \subseteq \F_t(s)$. \\

First let $n>0$. Notice that $V_n(s,t)f= \psi \circ \beta_{n}  \circ \alpha_n ... \circ \beta_1 \circ \alpha_1 \circ \phi$, where:

\begin{align*}
\phi: \hspace{2mm}&\Omega_n \to  J(s) \times X \times \Omega_n \to Y \times  \Omega_n\\
&\omega \to (T_{n}(s)(\omega),x_{n}(s)(\omega), \omega) \to  (\G_{x_{n}(s)(\omega)}(T_{n}(s)(\omega),t)f, \omega)
\end{align*}

The previous mapping holding since $T_{k}(s)(\omega) \in [s,t]$ $\forall \omega \in \Omega_n$, $k \in [|1,n|]$. $\phi$ is measurable iff each one of the coordinate mappings are. The canonical projections are trivially measurable. Let $A \in Bor(J(s))$, $B \in \X$. We have:
\begin{align*}
& \{\omega \in \Omega_n: T_{n}(s) \in A\}=\Omega_n \cap T_{n \wedge N_s(t)}(s)^{-1}(A) \in \F_{n}\\
& \{\omega \in \Omega_n: x_{n}(s) \in B\}=\Omega_n \cap x_{n \wedge N_s(t)}(s)^{-1}(B)\in \F_{n}\\
\end{align*}

Now, by measurability assumption, we have for $B \in \Y$:
\begin{align*}
& \{(t_{n},y_{n}) \in J(s) \times X:  \G_{y_{n}}(t \wedge t_{n}, t \vee t_{n})f\in B\}=C \in Bor(J(s))\otimes \X \mbox{ and therefore: }\\
& \{(t_{n},y_{n},\omega) \in J(s) \times X \times \Omega_n :  \G_{y_{n}}(t \wedge t_{n}, t \vee t_{n})f\in B\}=C \times \Omega_n \in Bor(J(s))\otimes \X \otimes \F_{n}\\
\end{align*}

Therefore $\phi$ is $\F_{n}- \Y \otimes \F_{n}$ measurable. Define for $i \in [1,n]$:
\begin{align*}
\alpha_{i}: \hspace{2mm} & Y \times  \Omega_n \to X \times X \times Y \times \Omega_n \to Y \times  \Omega_n\\
& (g, \omega) \to (x_{n-i}(s)(\omega), x_{n-i+1}(s)(\omega), g, \omega) \to (D(x_{n-i}(s)(\omega), x_{n-i+1}(s)(\omega))g, \omega)\\
\end{align*}

Again, the canonical projections are trivially measurable. We have for $p \in [|0,n|]$:
\begin{align*}
& \{\omega \in \Omega_n: x_{p}(s) \in B\}=\Omega_n \cap x_{p \wedge N_s(t)}(s)^{-1}(B):=C \in \F_{n} \\
& \mbox{Therefore } \{(g,\omega) \in Y \times \Omega_n: x_{p}(s) \in B\}=Y \times C \in \Y \otimes \F_{n}\\ 
\end{align*}

Now, by measurability assumption, $\forall B \in \Y$, $\exists C \in \X \otimes \X \otimes \Y$:
\begin{align*}
& \{(y_{n-i},y_{n-i+1},g, \omega) \in X \times X \times Y \times \Omega_n:  D(y_{n-i}, y_{n-i+1})g \in B \}=C \times \Omega_n \in \X \otimes \X \otimes \Y \otimes \F_{n}
\end{align*}

, which proves the measurability of $\alpha_i$. Then we define for $i \in [1,n]$:
\begin{align*}
\beta_{i}: \hspace{2mm} & Y \times  \Omega_n \to J(s) \times J(s) \times X \times Y \times \Omega_n \to Y \times  \Omega_n\\
& (g, \omega) \to (T_{n-i}(s)(\omega), T_{n-i+1}(s)(\omega), x_{n-i}(s)(\omega), g, \omega) \to (\G_{x_{n-i}(s)(\omega)}(T_{n-i}(s)(\omega),T_{n-i+1}(s)(\omega))g, \omega)\\
\end{align*}

By measurability assumption, $\forall B \in \Y$, $\exists C \in Bor(J(s))\otimes Bor(J(s))\otimes \X \otimes \Y$:
\begin{align*}
& \{(t_{n-i},t_{n-i+1}, y_{n-i},g, \omega) \in J(s) \times J(s) \times X \times Y  \times \Omega_n: \G_{y_{n-i}}(t_{n-i} \wedge t_{n-i+1}, t_{n-i} \vee t_{n-i+1})g  \in B \}\\
&=C \times \Omega_n \in Bor(J(s))\otimes Bor(J(s))\otimes \X \otimes \Y \otimes \F_{n},
\end{align*}
which proves the measurability of $\beta_i$.\\

Finally, define the canonical projection:
\begin{align*}
\psi: \hspace{2mm} & Y \times  \Omega_n \to Y\\
& (g,\omega) \to g
\end{align*}

which proves the measurability of $V_n(s,t)f$.\\

For $n=0$, we have $V(s,t)f(n)=\Gamma_{x(s)} \left(s,t \right)f$ and the proof is similar.\\
\end{proof}

The following propositions show that the random evolution has right-continuous paths and that it satisfies some integral representation, which will be used in section 4 to prove relative compactness in the Skorohod space $D(J(s),Y)$.
\begin{prop}
\label{12}
Let $V$ an inhomogeneous $Y$-random evolution and $(s,t) \in \Delta_J$, $\omega \in \Omega$. Then $V(\bullet,\bullet)(\omega)$ is an inhomogeneous $Y-$semigroup. Further, if $V$ is regular, then $u \to V(s,u)(\omega)$ is $Y-$strongly RCLL on $J(s)$, i.e. $\forall f \in Y$, $u \to V(s,u)(\omega)f \in D(J(s),(Y,|| \cdot ||))$. More precisely, we have for $f \in Y$:
\begin{align*}
& V(s,u^-)f=V(s,u)f \mbox{ if } u \notin \{T_{n}(s): n \in \NN\}\\
&V(s, T_{n+1}(s))f= V(s, T_{n+1}(s)^-) D(x_{n}(s),x_{n+1}(s))f \hspace{3mm}  \forall n \in \NN
\end{align*}
where we denote $V(s,t^-)f:=\lim_{u \uparrow t} V(s,u)f$.
\end{prop}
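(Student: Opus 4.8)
The plan is to verify the semigroup property for fixed $\omega$ directly from the pathwise product formula in Definition \ref{1000}, and then to read off the jump structure of $u \to V(s,u)(\omega)$ from the same formula, using regularity (hence $Y$-strong $t$-continuity of each $\G_x$) to handle continuity between the jump times $T_n(s)$.

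First I would check that $V(\bullet,\bullet)(\omega)$ is an inhomogeneous $Y$-semigroup. Condition i), $V(t,t)=I$, is immediate: for $s=t$ we have $N_s(t)=0$, the product $\prod_{k=1}^0$ is $I$ by convention, and $\G_{x(t)}(T_0(t),t)=\G_{x(t)}(t,t)=I$ since each $\G_x$ satisfies Definition 2.1 i). For the composition rule ii), fix $(s,r),(r,t)\in\Delta_J$. The key combinatorial facts are $N_s(t)=N_s(r)+N_r(t)$, and that the jump times and visited states satisfy $T_{k}(s)=T_{N_s(r)+j}(s)=T_j(r)$ and $x_{k}(s)=x_j(r)$ for $k=N_s(r)+j$, $j\ge 0$ (these follow from the definitions $T_n(s)=T_{N(s)+n}$, $x_n(s)=x(T_n(s))$). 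Splitting the product defining $V(s,t)(\omega)$ at $k=N_s(r)$, the first $N_s(r)$ factors together with $\G_{x(r)}(T_{N_s(r)}(s),r)$ reconstitute $V(s,r)(\omega)$ — here one inserts $\G_{x(r)}(T_{N_s(r)}(s),r)\G_{x(r)}(r,\cdot)=\G_{x(r)}(T_{N_s(r)}(s),\cdot)$ via Definition 2.1 ii) at the splitting point — while the remaining factors, after the index shift $k\mapsto k-N_s(r)$, reconstitute $V(r,t)(\omega)$, using the remark that $x_{N_s(r)}(s)=x(r)=x_0(r)$. Care is needed at the single "seam" factor straddling $r$: when $N_s(r)>0$ the last visited state before $r$ is $x(r)$, and $\G_{x(r)}(T_{N_s(r)}(s),r)\,\G_{x(r)}(r,T_1(r))=\G_{x(r)}(T_{N_s(r)}(s),T_1(r))$, which is exactly the factor appearing in $V(s,t)$; when $N_s(r)=0$ the argument degenerates correctly since then $x(s)=x(r)=x(t)$ on the relevant event.

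Next I would establish the path regularity. Fix $f\in Y$ and $\omega$. On each interval $[T_n(s),T_{n+1}(s))$ the only $u$-dependence in $V(s,u)(\omega)f$ is through the terminal factor $\G_{x(u)}(T_{N_s(u)}(s),u)f=\G_{x_n(s)}(T_n(s),u)f$, since $N_s(u)=n$ and $x(u)=x_n(s)$ there; regularity gives $\G_x$ is $Y$-strongly $t$-continuous, so $u\mapsto\G_{x_n(s)}(T_n(s),u)f$ is continuous on $[T_n(s),T_{n+1}(s))$, whence $u\mapsto V(s,u)(\omega)f$ is continuous on that interval and $V(s,u^-)f=V(s,u)f$ for $u\notin\{T_n(s)\}$. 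At $u=T_{n+1}(s)$, the left limit is $\lim_{u\uparrow T_{n+1}(s)} V(s,u)f$, which by the above equals the expression obtained by replacing the terminal factor with $\G_{x_n(s)}(T_n(s),T_{n+1}(s))$ — call the operator prefix $P_n$, so the left limit is $P_n\,\G_{x_n(s)}(T_n(s),T_{n+1}(s))f$ — again using $t$-continuity of $\G_{x_n(s)}$ to pass to the limit. On the other hand $V(s,T_{n+1}(s))f=P_n\,\G_{x_n(s)}(T_n(s),T_{n+1}(s))\,D(x_n(s),x_{n+1}(s))\,\G_{x_{n+1}(s)}(T_{n+1}(s),T_{n+1}(s))f = P_n\,\G_{x_n(s)}(T_n(s),T_{n+1}(s))\,D(x_n(s),x_{n+1}(s))f$ using $\G_{x_{n+1}(s)}(T_{n+1}(s),T_{n+1}(s))=I$. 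Comparing, $V(s,T_{n+1}(s))f = V(s,T_{n+1}(s)^-)\,D(x_n(s),x_{n+1}(s))f$, which is the claimed jump relation; in particular the right limit at $T_{n+1}(s)$ equals $V(s,T_{n+1}(s))f$, so the path is RCLL. Since on $\Omega$ (i.e. $\Omega^*$) the jump times $\{T_n(s)\}$ are locally finite, this shows $u\mapsto V(s,u)(\omega)f\in D(J(s),(Y,\|\cdot\|))$ for every $f$, i.e. $u\mapsto V(s,u)(\omega)$ is $Y$-strongly RCLL.

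The main obstacle is the bookkeeping at the splitting point $r$ in the semigroup identity: correctly matching the index shift $k\mapsto k-N_s(r)$, identifying which state labels the "seam" factor, and invoking Definition 2.1 ii) to glue $\G_{x(r)}(\cdot,r)$ to $\G_{x(r)}(r,\cdot)$ — all of this has to be done uniformly over the (countably many) events $\{N_s(r)=m\}\cap\{N_r(t)=j\}$, and the two degenerate cases $N_s(r)=0$ and $N_r(t)=0$ must be checked separately so that the terminal factor $\G_{x(t)}$ and the initial conventions line up. Everything else is either a convention ($\prod_{k=1}^0=I$), an application of $\G_x(u,u)=I$, or the already-available $Y$-strong $t$-continuity from the regularity hypothesis.
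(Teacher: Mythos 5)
Your proposal is correct and follows essentially the same route as the paper: continuity of $u \mapsto V(s,u)(\omega)f$ on each $[T_n(s),T_{n+1}(s))\cap J(s)$ via the $Y$-strong $t$-continuity contained in the regularity hypothesis, passage to the left limit at $T_{n+1}(s)$, and comparison with $V(s,T_{n+1}(s))f$ using $\G_{x_{n+1}(s)}(T_{n+1}(s),T_{n+1}(s))=I$ to obtain the jump relation; the splitting-and-gluing verification of the semigroup identity that you spell out is exactly what the paper compresses into ``straightforward computations.'' The only blemish is the parenthetical claim that $N_s(r)=0$ forces $x(s)=x(r)=x(t)$ (one only gets $x(s)=x(r)$ unless also $N_r(t)=0$), but this does not affect your argument, whose seam-gluing step is stated correctly.
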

\begin{proof} see Appendix \ref{A1} \end{proof}
In particular we observe that if $D=I$, then $u \to V(s,u)(\omega)$ is in fact $Y-$strongly continuous on $J(s)$.

\begin{prop}
\label{14}
Let $V$ a regular inhomogeneous $Y$-random evolution and $(s,t) \in \Delta_J$, $f \in Y_1$. Then $V$ satisfies on $\Omega$:
\begin{align*}
& V(s,t)f=f+\int_s^t V(s,u)A_{x(u)}(u)f du+\sum \limits_{k=1}^{N_s(t)} V(s,T_k(s)^-) [D(x_{k-1}(s),x_k(s))-I]f
\end{align*}
\end{prop}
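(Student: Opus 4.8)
The plan is to prove the integral representation by fixing $\omega \in \Omega$ and doing an induction on $N_s(t) = N_s(t)(\omega)$, exploiting the semigroup integral representation of Theorem \ref{7} (valid since $V$ is regular) on each inter-jump interval and the jump relation of Proposition \ref{12} at the jump times. First I would fix $s$, $f \in Y_1$, and $\omega$, and write $n := N_s(t)$ and $T_k := T_k(s)(\omega)$, so that $s = T_0 \le T_1 \le \dots \le T_n \le t < T_{n+1}$. On each interval $[T_k, T_{k+1})$ (and on $[T_n, t]$) the process $u \mapsto V(s,u)f$ coincides with $u \mapsto V(s,T_k) \Gamma_{x_k(s)}(T_k, u) f$, since by the multiplicative structure of $V$ in Definition \ref{1000} the factors indexed by $1,\dots,k$ are frozen on that interval while the last factor is $\Gamma_{x(u)}(T_{N_s(u)}(s), u) = \Gamma_{x_k(s)}(T_k, u)$. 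Applying Theorem \ref{7} to the inhomogeneous semigroup $\Gamma_{x_k(s)}$ (with generator $A_{x_k(s)}$) gives, for $T_k \le v \le w$ both in the $k$-th interval,
\begin{align*}
V(s,w)f - V(s,v)f = \int_v^w V(s,T_k)\Gamma_{x_k(s)}(T_k,u)A_{x_k(s)}(u)f\,du = \int_v^w V(s,u)A_{x(u)}(u)f\,du,
\end{align*}
using that $x(u) = x_k(s)$ and $V(s,u) = V(s,T_k)\Gamma_{x_k(s)}(T_k,u)$ for $u$ in that interval.

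Next I would telescope across the jumps. Write $V(s,t)f - f = \sum_{k=0}^{n}\big(V(s,T_{k+1}\wedge t)f - V(s,T_k)f\big)$ with the convention $T_{n+1}\wedge t = t$; wait — more carefully, split as $V(s,t)f - V(s,T_n)f + \sum_{k=1}^{n}\big(V(s,T_k)f - V(s,T_{k-1})f\big)$. Each difference $V(s,T_k)f - V(s,T_{k-1})f$ I would further split as $\big(V(s,T_k)f - V(s,T_k^-)f\big) + \big(V(s,T_k^-)f - V(s,T_{k-1})f\big)$. The second bracket is an integral over $[T_{k-1},T_k)$ of $V(s,u)A_{x(u)}(u)f\,du$ by the inter-jump computation above (taking the limit $w \uparrow T_k$, which is justified because $u \mapsto \|A_{x(u)}(u)f\|$ is locally integrable — it is bounded on the finitely many intervals involved since each $A_{x_k(s)}(u) \in \B(Y_1,Y)$ with $\|A_{x_k(s)}(u)\|_{\B(Y_1,Y)} \in L^1_{\loc}$ by the regularity hypothesis feeding Theorem \ref{7} — and $\|V(s,u)\|_{\B(Y)}$ is finite on that interval). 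The first bracket is exactly $V(s,T_k^-)[D(x_{k-1}(s),x_k(s)) - I]f$ by the jump formula in Proposition \ref{12}. Summing over $k = 1,\dots,n$ and adding the final piece $V(s,t)f - V(s,T_n)f = \int_{T_n}^t V(s,u)A_{x(u)}(u)f\,du$ (again by the inter-jump computation on $[T_n,t]$) collapses the integral terms into $\int_s^t V(s,u)A_{x(u)}(u)f\,du$ and the jump terms into $\sum_{k=1}^{N_s(t)} V(s,T_k(s)^-)[D(x_{k-1}(s),x_k(s))-I]f$, which is the claim.

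The main obstacle I anticipate is not the algebra of telescoping but the careful justification that $u \mapsto V(s,u)A_{x(u)}(u)f$ is genuinely in $L^1_Y([s,t])$ so that the integral $\int_s^t$ makes sense as a single Bochner integral, and that the one-sided limit $\lim_{w \uparrow T_k} \int_{T_{k-1}}^w = \int_{T_{k-1}}^{T_k}$ is valid — i.e. controlling the integrand near each jump time $T_k$ from the left, where $V(s,u)$ stays equal to $V(s,T_k^-)$ in norm and hence bounded. Here I would lean on regularity of $V$ (Definition \ref{1000}), which makes each $\Gamma_x$ regular, hence each restriction $u \mapsto V(s,u)A_{x_k(s)}(u)f$ on the $k$-th interval integrable, and then piece the finitely many interval-integrals together; measurability of the full integrand follows from the measurability statements assembled in Proposition \ref{11}. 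A secondary point to handle cleanly is that everything is done $\omega$ by $\omega$ on $\Omega$ (recall $N_s(t) < \infty$ everywhere on our $\Omega = \Omega^*$), so there are no exceptional null sets to track in this particular proof.
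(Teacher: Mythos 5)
Your proposal is correct and follows essentially the same route as the paper's proof: the paper argues by induction on the number of jumps, which is just your telescoping sum unrolled, and both rest on the same two ingredients — the integral representation of the regular semigroup $\Gamma_{x_k(s)}$ on each inter-jump interval (via Theorems \ref{2}/\ref{7} and the frozen prefix $V(s,T_k(s))$) and the jump relation $V(s,T_k(s))f=V(s,T_k(s)^-)D(x_{k-1}(s),x_k(s))f$ from Proposition \ref{12}, with the left limit at each $T_k(s)$ handled by continuity of the Bochner integral. The only cosmetic quibble is that the integrability of $u\mapsto V(s,u)A_{x(u)}(u)f$ on each interval is supplied directly by the definition of a regular semigroup (which postulates $u\mapsto\G(s,u)A_\G(u)f\in L^1_Y$) rather than by the hypotheses of Theorem \ref{7}, but this does not affect the argument.
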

\begin{proof} see Appendix \ref{A1} \end{proof}


 \section{Law of Large numbers for the inhomogeneous Random Evolution}.\\

 \textbf{Notation}: in the following we denote for $\e \in \RR^+$, $(s,t) \in \RR^{+2}$: $t^{\e,s}:=s+\e(t-s)$.\\
 
 In the same way we introduced inhomogeneous $Y-$random evolutions, we consider a family $(D^\e(x,y))_{(x,y) \in X \times X, \e \in (0,1]}$ of $\B(Y)-$contractions, satisfying $\forall \e \in (0,1]$:
\begin{align*}
& (x,y,f) \to D^\e(x,y)f \mbox{ is } \X \otimes \X \otimes \Y-\Y \mbox{ measurable }\\
\end{align*}
and let $D^0(x,y):=I$. We define:
\begin{align*}
 \D(D_1):= \bigcap_{\substack{\e \in [0,1]\\(x,y) \in X \times X }} \left\{ f \in Y: \lim_{\substack{h \to 0\\ \e+h \in [0,1]}}  \frac{D^{\e+h}(x,y) f-D^\e(x,y)f}{h} \in Y \right\}
\end{align*} 

and $\forall f \in  \D(D_1)$:
\begin{align*}
&D^\e_1(x,y)f:=\lim_{\substack{h \to 0\\ \e+h \in [0,1]}} \frac{D^{\e+h}(x,y) f-D^\e(x,y)f}{h}
\end{align*}  

In the same way we introduce $\D(D_2)$, corresponding to the $2^{nd}$ derivative.\\

We also let:
\begin{align*}
 \D(D^0_1 \in Y_1):=\left\{ f \in \D(D_1) \cap Y_1:  D^0_1(x,y)f \in Y_1\hspace{2mm} \forall (x,y) \in X \times X \right\}\\
\end{align*}

We define the space:
\begin{align*}
&\widehat{\D}:=\bigcap_{x \in X} \D(A'_x) \cap \D(D_2) \cap \D(D^0_1 \in Y_1), 
\end{align*}

where $\D(A'_x)$ was defined in \ref{1}. These notations will hold until the end of the paper. In this section we assume the following set of assumptions that we call \textbf{(A0)}:\\

\textit{Assumptions on the regularity of operators}:
\begin{enumerate}[i)]
\item $Y_1 \subseteq \D(D_1)$
\item $(\G_x)_{x \in X}$ are regular
\item $A_x$ is $Y_1-$strongly continuous, $\forall x \in X$\\
\end{enumerate}

\textit{Assumptions on the semi-Markov process}:
\begin{enumerate}[i)]
\item $\exists \bar{\tau}>0$ such that $Q(x,y,(\bar{\tau},\infty))=0$ $\forall (x,y) \in X \times X$ (uniformly bounded sojourn increments)
\end{enumerate}

\textbf{Remark}: the latter implies in particular that all the moments $m_n$ are well-defined. \\

\textit{Assumptions on the boundedness of operators}:
\begin{enumerate}[i)]
\item $(\G_x)_{x \in X}$ and $(D^\e(x,y))_{(x,y) \in X \times X, \e \in (0,1]}$ are $\B(Y)-$contractions, with $D^0:=I$.
\item $A_x(t) \in \B(Y_1,Y)$ $\forall t \in J$, $\forall x \in X$ and $\sup_{u \in J}||A_x(u)||_{\B(Y_1,Y)}<\infty$
\item $\sup_{\substack{u \in J \\x \in X}}||A'_x(u)f||<\infty$, $\sup_{\substack{u \in J \\x \in X}} ||A_x(u)f||_{Y_1}<\infty$, $\forall f \in \bigcap_{x \in X}\D(A'_x)$
\item $\sup_{\substack{\e \in [0,1] \\ (x,y) \in X^2}}||D_1^\e(x,y) f||<\infty$, $\forall f \in \D(D_1)$
\item $\sup_{\substack{\e \in [0,1] \\ (x,y) \in X \times X}}||D_2^\e(x,y) f||<\infty$, $\forall f \in \D(D_2)$
\end{enumerate}

\vspace{5mm}

As said in introduction: similarly to what has been done in \cite{SK} or \cite{Wa84} we index the inhomogeneous Random Evolution by a small parameter $\e$ that we use to rescale time so that the Semi-Markov process goes to its unique stationary distribution:
\begin{df}
Let $V$ an inhomogeneous $Y-$random evolution. We define (pathwise on $\Omega$) the inhomogeneous $Y-$random evolution in the averaging scheme $V_\e$ for $\e \in (0,1]$, $(s,t) \in \Delta_J$ by:
\begin{align*}
& V_{\e}(s,t):=\left[ \prod \limits_{k=1}^{N_s \left(t^{\frac{1}{\e},s} \right)} \Gamma_{x_{k-1}(s)} \left(T_{k-1}^{\e,s}(s), T_{k}^{\e,s}(s) \right) D^\e(x_{k-1}(s),x_k(s))\right] \Gamma_{x\left( t^{\frac{1}{\e},s}\right)} \left(T_{N_s\left(t^{\frac{1}{\e},s}\right)}^{\e,s}(s),t \right)\\
\end{align*}
\end{df}
 
 Remark: we notice that $V_\e$ is well-defined since on $\Omega$:
 \begin{align*}
 &T_{N_s\left(t^{\frac{1}{\e},s}\right)}^{\e,s}(s)=s+\e \left( T_{N_s\left(t^{\frac{1}{\e},s}\right)}(s)-s\right) \leq s+\e \left( t^{\frac{1}{\e},s}-s\right) =t
\end{align*}

and that it coincides with $V$ for $\e=1$, i.e. $V_1(s,t)=V(s,t)$.\\

Our goal is to prove, as in \cite{SK}, that for each $f$ in some suitable subset of $Y$, $\{V_\e(s,\bullet)f\}$ - seen as a family of elements of $D(J(s),Y)$ - converges weakly to some continuous limiting process $V_0(s,\bullet)f$ to be determined. To this end, we will first prove that $\{V_\e(s,\bullet)f\}$ is relatively compact with a.e. continuous weak limit points. This is equivalent to the notion of $C-$tightness in \cite{JS} (VI.3) because $\PPP(D(J(s),Y))$ topologized with the Prohorov metric is a separable and complete metric space ($Y$ being a separable Banach space), which implies that relative compactness and tightness are equivalent in $\PPP(D(J(s),Y))$ (by Prohorov's theorem). Then we will identify the limiting process $V_0$.\\

We first need some classical elements that can be found in \cite{SK} (1.4) and \cite{EK} (sections 3.8 to 3.11). In particular the Skorohod space $D(J(s),Y)$ will always be topologized with the Skorohod metric $d$ (see \cite{EK}, chapter 3, equation 5.2).

\begin{df}
Let $(\nu_n)_{n \in \NN}$ a sequence of probability measures on a metric space $(S, d)$. We say that $\nu_n$ converges weakly to $\nu$, and write $\nu_n \Rightarrow \nu$ iff $\forall f \in C_b(S)$:
 \begin{align*}
 \lim_{n \to \infty} \int_S fd \nu_n =\int_S fd \nu
 \end{align*}
\end{df}

\begin{df}
Let $\{\nu_\e\}$ a family of probability measures on a metric space $(S, d)$. $\{\nu_\e\}$ is said to be relatively compact iff for any sequence $(\nu_n)_{n \in \NN} \subseteq \{\nu_\e\}$, there exists a weakly converging subsequence. 
\end{df}

\begin{df} 
Let $s \in J$, $\{X_\e\}$ a family of stochastic processes with sample paths in $D(J(s),Y)$. We say that $\{X_\e\}$ is relatively compact iff $\{\LL(X_\e)\}$ is (in the metric space $\PPP(D(J(s),Y))$ endowed with the Prohorov metric). We write that $X_\e \Rightarrow X$ iff $\LL(X_\e) \Rightarrow \LL(X)$. We say that $\{X_\e\}$ is C-relatively compact iff it is relatively compact and if ever $X_\e \Rightarrow X$, then $X$ has a.e. continuous sample paths.\\

If $E_Y \subseteq Y$, we say that $\{V_\e\}$ is $E_Y-$relatively compact (resp. $E_Y-$C-relatively compact) iff $\{V_\e(s,\bullet)f\}$ is $\forall f \in E_Y$, $\forall s \in J$.\\
\end{df}
 
 \begin{df}
Let $s \in J$, $\{X_\e\}$ a family of stochastic processes with sample paths in $D(J(s),Y)$. We say that $\{X_\e\}$ satisfies the compact containment criterion if $\forall \Delta \in (0,1]$, $\forall T \in J(s)$, $\exists K \subseteq Y$ compact set such that:
\begin{align*} 
& \liminf_{\e \to 0} \PP[X_\e(t) \in K \hspace{3mm} \forall t \in [s,T]] \geq 1-\Delta
\end{align*}

We say that $\{V_\e\}$ satisfies the compact containment criterion in $E_Y \subseteq Y$ if $\forall f \in E_Y$, $\forall s \in J$, $\{V_\e(s,\bullet)f\}$ satisfies it. 
\end{df}

\begin{thm}
\label{15}
Let $s \in J$, $\{X_\e\}$ a family of stochastic processes with sample paths in $D(J(s),Y)$. $\{X_\e\}$ is C-relatively compact iff it is relatively compact and $J_s(X_\e) \Rightarrow 0$, where:
 \begin{align*} 
& J_s(X_\e):=\int_s^\infty e^{-u} ( J_s(X_\e,u)\wedge 1)du\\
& J_s(X_\e,u):= \sup_{t \in [s,u]} ||X_\e(t)-X_\e(t^-)||
\end{align*} 
\end{thm}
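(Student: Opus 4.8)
The plan is to recognize Theorem \ref{15} as essentially a restatement of the standard characterization of $C$-tightness (or $C$-relative compactness) for processes in a Skorohod space, adapted to the present setting where $Y$ is a separable Banach space; the content is that relative compactness together with the vanishing of the maximal jumps in the limit is equivalent to relative compactness with a.e.\ continuous limit points. So first I would fix a sequence $(X_{\e_n})_{n \in \NN}$ extracted from $\{X_\e\}$ and, assuming relative compactness, pass to a weakly convergent subsequence $X_{\e_{n_k}} \Rightarrow X$ in $D(J(s),Y)$. The key elementary fact I would establish (or quote from \cite{EK}, Chapter 3, as the paper's reference pattern suggests) is that $J_s(\cdot)$ as defined is a bounded continuous functional on $D(J(s),Y)$ with respect to the Skorohod metric $d$ — boundedness is clear since $J_s(X_\e,u)\wedge 1 \le 1$ and $\int_s^\infty e^{-u}du <\infty$, and continuity in the $J_1$-topology follows because the modulus $u\mapsto J_s(x,u)$ is affected in a controlled way by the time-change and uniform perturbations allowed by $d$ (this is the standard argument that $x\mapsto \sup_{t\le u}\|x(t)-x(t^-)\|$ is, up to the $\wedge 1$ truncation and the $e^{-u}$ averaging, continuous at every $x$; one uses that for $d(x_m,x)\to 0$ there are time changes $\lambda_m\to\mathrm{id}$ with $\sup_t\|x_m(\lambda_m(t))-x(t)\|\to 0$).

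Granting that, the equivalence is a two-way implication. For the forward direction, suppose $\{X_\e\}$ is $C$-relatively compact. Given any subsequence of $\{J_s(X_\e)\}$, relative compactness of $\{X_\e\}$ lets me extract a further subsequence with $X_{\e_{n_k}}\Rightarrow X$ where $X$ has a.e.\ continuous paths; by the continuous mapping theorem $J_s(X_{\e_{n_k}})\Rightarrow J_s(X)$, and since $X$ is a.e.\ path-continuous we have $X(t)=X(t^-)$ for all $t$ a.e., hence $J_s(X)=0$ a.s., so $J_s(X_{\e_{n_k}})\Rightarrow 0$. Since every subsequence of $\{J_s(X_\e)\}$ has a further subsequence converging in distribution to the constant $0$, the whole family satisfies $J_s(X_\e)\Rightarrow 0$. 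For the converse, suppose $\{X_\e\}$ is relatively compact and $J_s(X_\e)\Rightarrow 0$. Take any $X_{\e_{n_k}}\Rightarrow X$; again $J_s(X_{\e_{n_k}})\Rightarrow J_s(X)$ by continuity of $J_s$, but also $J_s(X_{\e_{n_k}})\Rightarrow 0$ by hypothesis, so $J_s(X)=0$ a.s. Because $J_s(x)=0$ forces $J_s(x,u)=0$ for a.e.\ $u$, hence (by monotonicity of $u\mapsto J_s(x,u)$ and right-continuity considerations) $\sup_{t\in[s,u]}\|x(t)-x(t^-)\|=0$ for all $u$, i.e.\ $x$ has no jumps; thus $X$ has continuous paths a.e. Combined with relative compactness this is exactly $C$-relative compactness.

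The one delicate point — and the step I expect to be the main obstacle — is the continuity of the functional $J_s$ on $(D(J(s),Y),d)$. The jump-size functional is genuinely not continuous in the Skorohod topology on all of $D$ without some care: a sequence can have jumps at slightly shifting locations or a jump can split into two small jumps. The saving features here are exactly the truncation by $\wedge 1$ and the integration $\int_s^\infty e^{-u}(\,\cdot\wedge 1)du$, which smooth things out; the correct statement is that $J_s$ is continuous at every point of $D(J(s),Y)$ (not merely at continuous points), and this is the lemma-level fact underlying the $J_1$-tightness criterion in \cite{EK} and \cite{JS}. I would either cite that directly or reproduce the short argument: given $d(x_m,x)\to 0$, pick time changes $\lambda_m$ with $\|\lambda_m-\mathrm{id}\|_\infty\vee\sup_t\|x_m(\lambda_m(t))-x(t)\|\to 0$; then for each fixed $u$, $|J_s(x_m,u)-J_s(x,\lambda_m^{-1}(u))|$ is controlled by the uniform closeness, and $\lambda_m^{-1}(u)\to u$ while $u\mapsto J_s(x,u)$ has at most countably many discontinuities, so dominated convergence against $e^{-u}du$ finishes it. Everything else in the proof is a routine application of the continuous mapping theorem and the subsequence characterization of convergence in distribution.
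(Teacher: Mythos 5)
Your argument is correct, and it is essentially the standard proof of this statement (Ethier--Kurtz, Theorem 3.10.2): the paper itself offers no proof, quoting the result as classical from \cite{EK} (sections 3.8--3.11) and \cite{SK}, and your outline — continuity of the truncated, exponentially averaged jump functional $J_s$ on $(D(J(s),Y),d)$ plus the continuous mapping theorem and the subsequence characterization of weak convergence — is exactly the argument given there. The delicate point you flag (continuity of $J_s$ at every point of $D$, using time changes, monotonicity of $u \mapsto J_s(x,u)$ and dominated convergence) is indeed the lemma-level fact needed, and your treatment of it is sound.
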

 
\begin{thm}
\label{16}
Let $s \in J$, $\{X_\e\}$ a family of stochastic processes with sample paths in $D(J(s),Y)$. $\{X_\e\}$ is relatively compact iff:
\begin{enumerate}[i)]
\item $\{X_\e\}$ satisfies the compact containment criterion
\item $\forall T \in J(s)$, $\exists r>0$ and a family $\{C_s(\e,\eta): (\e,\eta) \in (0,1] \times (0,1)\}$ of nonnegative random variables such that $\forall (\e,\eta) \in (0,1] \times (0,1)$, $\forall h \in [0,\eta]$, $\forall t \in [s,T]$:
\begin{align*}
&  \EE \left[ \left.||X_\e(t+h)-X_\e(t) ||^r \right| \GG^{\e,s}_t \right] \leq \EE[C_{s}(\e, \eta) | \GG^{\e,s}_t] \\
& \lim_{\eta \to 0} \limsup_{\e \to 0} \EE[C_{s}(\e, \eta)]=0
\end{align*}
where $\GG^{\e,s}_t:=\sigma \left[ X_\e(u): u \in [s,t]\right]$
\end{enumerate}
\end{thm}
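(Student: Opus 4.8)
The plan is to deduce Theorem~\ref{16} from the general theory of relative compactness in Skorohod space, specialized to the separable Banach target $Y$; the relevant ingredients are in \cite{EK} (Chapter~3, \S\S6--8) and \cite{SK} (\S1.4), and the work is to match the hypotheses. Since $Y$ is separable, $D(J(s),Y)$ with the Skorohod metric $d$ is a separable complete metric space, hence $\PPP(D(J(s),Y))$ with the Prohorov metric is Polish, and by Prohorov's theorem relative compactness of $\{\LL(X_\e)\}$ is equivalent to its tightness. So the whole statement reduces to characterizing tightness of $\{\LL(X_\e)\}$ by (i)--(ii).

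Next I would invoke the characterization of relatively compact subsets of $D(J(s),Y)$: a set $\Gamma$ has compact closure iff for each $t$ in a countable dense subset of $J(s)$ the slice $\{x(t):x\in\Gamma\}$ has compact closure in $Y$, and for each $T\in J(s)$ one has $\lim_{\delta\downarrow0}\sup_{x\in\Gamma}w'(x,\delta,T)=0$, where $w'(x,\delta,T)$ is the Skorohod oscillation modulus (the infimum over $\delta$-sparse partitions of $[s,T]$ of the largest oscillation of $x$ over a partition cell). Feeding this into Prohorov's theorem inside $D(J(s),Y)$ gives the classical two-part criterion: $\{\LL(X_\e)\}$ is tight iff $(a)$ $\{X_\e\}$ satisfies the compact containment criterion, and $(b)$ for every $T\in J(s)$ and $\eta>0$, $\lim_{\delta\downarrow0}\limsup_{\e\to0}\PP[w'(X_\e,\delta,T)\ge\eta]=0$. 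Condition $(a)$ is exactly (i), so the content of the theorem is the equivalence of $(b)$ with the conditional-moment bound (ii).

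The implication (ii)$\Rightarrow(b)$ is the main obstacle, and it is also the part genuinely needed later for $\{V_\e\}$. Here I would run the argument behind \cite{EK}, Theorem~3.8.6: from $\EE[\|X_\e(t+h)-X_\e(t)\|^r\mid\GG^{\e,s}_t]\le\EE[C_s(\e,\eta)\mid\GG^{\e,s}_t]$, valid for all $t\in[s,T]$ and $h\in[0,\eta]$, one first upgrades the bound to $\{\GG^{\e,s}_t\}$-stopping times via optional sampling, introduces the successive $\eta$-oscillation times $\sigma_0:=s$ and $\sigma_{j+1}:=\inf\{t>\sigma_j:\ \|X_\e(t)-X_\e(\sigma_j)\|\wedge\|X_\e(t^-)-X_\e(\sigma_j)\|\ge\eta\}\wedge(T+1)$, and shows, by Markov's inequality applied to the stopped bound, that the probability that two consecutive $\sigma_j$ inside $[s,T]$ lie within $\delta$ of each other is $\lesssim\eta^{-r}\EE[C_s(\e,\eta)]$ up to terms vanishing with $\delta$. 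Since consecutive oscillation times more than $\delta$ apart force $w'(X_\e,\delta,T)<2\eta$ — once one also controls, uniformly in $\e$, the number of such times on $[s,T]$, itself a consequence of (ii) with $h\le\eta$ fixed — letting $\e\to0$ and then $\eta\downarrow0$ and using $\lim_{\eta\downarrow0}\limsup_{\e\to0}\EE[C_s(\e,\eta)]=0$ yields $(b)$.

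For the converse, (i) is immediate: from tightness pick, for $\Delta>0$, a compact $\Gamma_\Delta\subseteq D(J(s),Y)$ with $\PP[X_\e\in\Gamma_\Delta]\ge1-\Delta$ for all $\e$; by the compact-set characterization above, the rational-time relative compactness together with the uniform vanishing of $w'$ forces $K_\Delta:=\overline{\{x(t):x\in\Gamma_\Delta,\,t\in[s,T]\}}$ to be compact in $Y$, and since $\{X_\e(t)\in K_\Delta\ \forall t\in[s,T]\}\supseteq\{X_\e\in\Gamma_\Delta\}$ the compact containment criterion follows. The remaining implication $(b)\Rightarrow$(ii) is soft: taking $r=1$, one sets $C_s(\e,\eta)$ to be a measurable majorant of $\operatorname*{ess\,sup}_{t\in[s,T]}\EE[\sup_{h\in[0,\eta]}(\|X_\e(t+h)-X_\e(t)\|\wedge1)\mid\GG^{\e,s}_t]$ plus a tail correction coming from $\Gamma_\Delta$, and uses $(b)$ together with dominated convergence and the uniform sup-norm bound valid on $\Gamma_\Delta$ to get $\lim_{\eta\downarrow0}\limsup_{\e\to0}\EE[C_s(\e,\eta)]=0$; alternatively one simply cites \cite{SK} (\S1.4), where this equivalence is recorded. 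The genuinely delicate point throughout is the passage between the pathwise modulus $w'$ and the conditional $L^r$-increment control, i.e.\ the stopping-time bookkeeping behind \cite{EK}, Theorem~3.8.6.
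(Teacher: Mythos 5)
The paper itself contains no proof of \ref{16}: it is imported as one of the ``classical elements'' from \cite{SK} (section 1.4) and \cite{EK} (sections 3.8--3.11), and in the sequel (\ref{20}, \ref{21}, \ref{27}) only the sufficiency half is ever invoked. Your reconstruction of that half is the standard route one would extract from \cite{EK}: Prohorov's theorem to pass to tightness (exactly as the paper's preceding remark does), the characterization of compact sets of $D(J(s),Y)$ via the modulus $w'$, and the oscillation-time argument of Theorem 8.6, chapter 3 of \cite{EK}. In fact you can shortcut the bookkeeping: writing $q:=\|\cdot\|\wedge 1$, condition (ii) with exponent $r$ dominates the product-of-adjacent-increments quantity $q^{r}(X_\e(t+u),X_\e(t))\,q^{r}(X_\e(t),X_\e(t-v))$ appearing in that theorem (the second factor is $\leq 1$ and $q^r\le\|\cdot\|^r$), and the initial-increment condition is the case $t=s$, so sufficiency and the ``relative compactness $\Rightarrow$ (i)'' part can simply be cited; note also that your one-line description of the key estimate (``Markov's inequality applied to the stopped bound'') hides the real point, namely the Aldous-type comparison of the increment from $\sigma_j$ over a deterministic lag with the increment from $\sigma_{j+1}$ over the same lag, which is what actually bounds $\PP[\sigma_{j+1}-\sigma_j\le\delta]$.

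The necessity half of your proposal, however, has a genuine gap, and the implication you try to prove is false as stated. Take $Y=\RR$, $s=0$ and $X_\e(t):=1_{\{t\ge 1\}}$ deterministic for every $\e$: the family is trivially relatively compact and satisfies the compact containment criterion, and every $\GG^{\e,s}_t$ is trivial, yet for any $T\ge1$, $r>0$, $\eta\in(0,1)$ and any admissible $C_s(\e,\eta)$, choosing $t<1<t+h$ with $h\le\eta$ forces $\EE[C_s(\e,\eta)]\ge \EE\left[\|X_\e(t+h)-X_\e(t)\|^r\right]=1$, so $\lim_{\eta\to0}\limsup_{\e\to0}\EE[C_s(\e,\eta)]\ge1$. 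This is exactly where your construction breaks: the majorant you propose is built from $\sup_{h\le\eta}(\|X_\e(t+h)-X_\e(t)\|\wedge1)$, i.e.\ the ordinary modulus of continuity over lag $\eta$, which the Skorohod modulus condition on $w'$ does not control across jumps, so no dominated-convergence argument can make its expectation small. The genuinely two-sided criterion (Theorem 8.6, chapter 3 of \cite{EK}) uses the product of forward and backward increments, which is insensitive to a single jump; with single increments, as in the statement here (inherited from \cite{SK}), only the ``if'' direction holds --- which is the only direction the paper uses, and the direction you should either prove or cite, flagging the converse as false rather than arguing it.
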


 \subsection{The compact containment criterion}
  
The compact containment criterion can be in practice quite hard to prove, because we need either a compact embedding of a Banach space into $Y$, or a characterization of compact sets in $Y$. An important remark should be made about applications where $Y=C_0(\RR^d)$, the space of continuous functions $\RR^d \to \RR$ vanishing at infinity. To prove this compact containment criterion, it is said in both \cite{Wa84} and \cite{SK} that there exists a compact embedding of a Sobolev space into $C_0(\RR^d)$, which is not true. Here we suggest a method to prove it (\ref{18}), that will be applied in section 5 to the case of inhomogeneous L\'{e}vy Random Evolutions.\\

If we have a compact embedding then the proof of the compact containment is easy, as mentioned here:  
\begin{prop}
\label{17}
Assume that there exists a Banach space $(Z,|||\cdot|||)$ compactly embedded in $Y$, and that $(\G_x)_{x \in X}$, $(D^\e(x,y))_{\e \in (0,1], (x,y) \in X \times X}$ are $\B(Z)-$contractions. Then $\{V_\e\}$ satisfies the compact containment criterion in $Z$.
\end{prop}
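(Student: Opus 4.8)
The plan is to exhibit, for each fixed $s \in J$ and each $f \in Z$, a \emph{single} compact set $K \subseteq Y$ that accommodates the whole family $\{V_\e(s,\bullet)f\}$ with probability one; the point is that $V_\e(s,t)f$ never leaves a fixed $Z$-ball, and that $Z$-balls are relatively compact in $Y$ by the compact embedding, so the compact containment criterion becomes a trivial a.e. statement rather than a genuine tightness estimate.

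\textbf{Step 1 (a compact set from the embedding).} Since $(Z,||| \cdot |||)$ is compactly embedded in $(Y,|| \cdot ||)$, for every $R > 0$ the ball $B_Z(R) := \{g \in Z : |||g||| \le R\}$ is relatively compact in $(Y,|| \cdot ||)$; hence its $|| \cdot ||$-closure $K_R := \overline{B_Z(R)}$ is a compact subset of $Y$.

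\textbf{Step 2 (the random evolution stays in a fixed $Z$-ball).} Fix $s \in J$ and $f \in Z$. For every $\omega \in \Omega$ and every $\e \in (0,1]$, $(s,t) \in \Delta_J$, the operator $V_\e(s,t)(\omega)$ is, by its very definition, a finite composition of operators of the form $\G_x(r,u)$ and $D^\e(x,y)$, the number of factors being $N_s(t^{\frac{1}{\e},s})(\omega)<\infty$ because on $\Omega$ the renewal process is regular ($X$ being finite). By hypothesis each $\G_x(r,u)$ and each $D^\e(x,y)$ leaves $Z$ invariant and acts on $(Z,||| \cdot |||)$ as a contraction; composing, $V_\e(s,t)(\omega)$ maps $Z$ into $Z$ with $||| \cdot |||$-norm $\le 1$, so $V_\e(s,t)(\omega)f \in B_Z(|||f|||)$ for every $\omega \in \Omega$, every $\e \in (0,1]$, and every $t \in J(s)$.

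\textbf{Step 3 (conclusion).} Put $K := K_{|||f|||}$, which is compact in $Y$ by Step 1. By Step 2 the event $\{V_\e(s,t)f \in K \ \forall t \in [s,T]\}$ equals $\Omega$ for every $\e \in (0,1]$ and every $T \in J(s)$, hence
\begin{align*}
\liminf_{\e \to 0} \PP\big[ V_\e(s,t)f \in K \ \ \forall t \in [s,T] \big] = 1 \ge 1 - \Delta
\end{align*}
for every $\Delta \in (0,1]$. Since $s \in J$ and $f \in Z$ were arbitrary, $\{V_\e\}$ satisfies the compact containment criterion in $Z$.

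\textbf{Main difficulty.} There is essentially none here: the only two points deserving a word of care are that one must pass to the $Y$-closure of the $Z$-ball to obtain a genuinely compact rather than merely relatively compact set, and that the product defining $V_\e$ has finitely many factors, which holds on $\Omega$ thanks to regularity of the renewal process. The substance of the argument is simply that $\B(Z)$-contractivity propagates through finite products and therefore furnishes a pathwise, $\e$-uniform a priori bound in the finer norm $||| \cdot |||$. (This is exactly why the genuinely hard work, carried out in Section~5, is needed precisely when no such compactly embedded $Z$ is available, e.g.\ for $Y = C_0(\RR^d)$.)
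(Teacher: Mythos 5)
Your proof is correct and follows essentially the same route as the paper's: take the $Z$-ball of radius $|||f|||$, use the compact embedding to make its $Y$-closure compact, and observe that $V_\e(s,t)(\omega)$ is a finite product of $\B(Z)$-contractions, hence leaves that ball invariant, so the containment event has probability one for every $\e$. Nothing to add.
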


\begin{proof} 
Let $f \in Z$, $(s,t) \in \Delta_J$, $c:=|||f|||$ and $K:=cl(Y)-S_c(Z)$, the $Y-$closure of the $Z-$closed ball of radius $c$. $K$ is compact because of the compact embedding of $Z$ into $Y$. Let $\e \in (0,1]$. Because $(\G_x)_{x \in X}$, $(D^\e(x,y))_{(x,y) \in X \times X}$ are $\B(Z)-$contractions, $V_\e$ is a $\B(Z)-$contraction and we have $\forall \omega \in \Omega$: $|||V_\e(s,t)(\omega)f|||\leq |||f||| = c$. Therefore $V_\e(s,t)(\omega)f \in S_c(Z) \subseteq K$ and so $\PP[V_\e(s,t)f \in K \hspace{3mm} \forall t \in [s,T] ]=\PP(\Omega)=1 \geq 1-\Delta$.\\
\end{proof} 
 
 For example, we can consider the Rellich-Kondrachov compactness theorem: if $U \subseteq \RR^d$ is an open, bounded Lipschitz domain, then the Sobolev space $W^{1,p}(U)$ is compactly embedded in $L^q(U)$, where $p \in [1,d)$ and $q \in [1,\frac{dp}{d-p})$.\\
 
For the space $C_0(\RR^d)$, there is no well-known such compact embedding, therefore we have to proceed differently.
\begin{prop}
\label{18}
Let $Y:=C_0(\RR^d)$, $E_Y \subseteq Y$. Assume that $\forall \Delta \in (0,1]$, $\forall (s,T) \in \Delta_J$, $\forall \e \in (0,1]$, $\forall f \in E_Y$, $\exists A_\e \subseteq \Omega: \PP(A_\e) \geq 1-\Delta$ and the family $\{V_\e(s,t)(\omega)f: t \in[s,T], \e \in (0,1], \omega \in A_\e \}$ converge uniformly to $0$ at infinity, is equicontinuous and uniformly bounded. Then $\{V_\e\}$ satisfies the compact containment criterion in $E_Y$.
\end{prop}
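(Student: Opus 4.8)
\textbf{Proof proposal for Proposition \ref{18}.}

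The plan is to invoke the Arzelà--Ascoli-type characterization of compact sets in $C_0(\RR^d)$: a set $K \subseteq C_0(\RR^d)$ is relatively compact if and only if it is uniformly bounded, equicontinuous, and uniformly vanishing at infinity (i.e.\ $\forall \epsilon>0$ there is $R>0$ such that $|g(z)|<\epsilon$ for all $|z|\geq R$ and all $g \in K$); then the closure of such a $K$ is compact. So the strategy is, for fixed $\Delta \in (0,1]$, $(s,T) \in \Delta_J$ and $f \in E_Y$, to take the set $A_\epsilon \subseteq \Omega$ furnished by the hypothesis and let $K := cl\left(\{V_\epsilon(s,t)(\omega)f : t \in [s,T],\ \epsilon \in (0,1],\ \omega \in A_\epsilon\}\right)$, the $Y$-closure of the (by assumption) uniformly bounded, equicontinuous, uniformly-vanishing-at-infinity family. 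By the characterization, $K$ is compact in $C_0(\RR^d)$.

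Next I would check the defining inequality of the compact containment criterion. Fix $\epsilon \in (0,1]$; then for every $\omega \in A_\epsilon$ and every $t \in [s,T]$ we have $V_\epsilon(s,t)(\omega)f \in K$ by construction, hence the event $\{V_\epsilon(s,t)f \in K \ \forall t \in [s,T]\}$ contains $A_\epsilon$, so its probability is at least $\PP(A_\epsilon) \geq 1-\Delta$. Taking $\liminf_{\epsilon \to 0}$ preserves the bound, giving
\begin{align*}
\liminf_{\epsilon \to 0} \PP[V_\epsilon(s,t)f \in K \ \forall t \in [s,T]] \geq 1-\Delta,
\end{align*}
which is exactly the compact containment criterion for $\{V_\epsilon(s,\bullet)f\}$. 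Since this holds for every $f \in E_Y$ and every $s \in J$ (the hypothesis is quantified over all such), $\{V_\epsilon\}$ satisfies the compact containment criterion in $E_Y$ by definition.

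The only genuine content is the compactness characterization in $C_0(\RR^d)$, which is the point the paper flags as having been mishandled in \cite{Wa84} and \cite{SK}; everything else is bookkeeping with the already-fixed random sets $A_\epsilon$. The main subtlety to be careful about is that $K$ must be a single deterministic compact set not depending on $\epsilon$ or $\omega$ — this is why one takes the union over all $\epsilon$ and all $\omega \in A_\epsilon$ inside the closure, and why the hypothesis is (correctly) stated with the equicontinuity/boundedness/vanishing being uniform across $\epsilon \in (0,1]$ and $\omega \in A_\epsilon$ simultaneously. One should also note that the measurability of the event $\{V_\epsilon(s,t)f \in K \ \forall t \in [s,T]\}$ is not an issue here: it contains $A_\epsilon$ and is contained in the sure event, and since $(\Omega,\F,\PP)$ is complete and we only need a lower bound on its (outer) probability, the estimate goes through.
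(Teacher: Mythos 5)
Your proof is correct and follows essentially the same route as the paper's: form the set $K_1:=\{V_\e(s,t)(\omega)f: t \in [s,T],\ \e \in (0,1],\ \omega \in A_\e\}$, take its $Y$-closure $K$, use the Arzel\`a--Ascoli characterization of compact subsets of $C_0(\RR^d)$ (the paper phrases it via the Alexandroff compactification) to get compactness of $K$, and then bound $\PP[V_\e(s,t)f \in K \ \forall t \in [s,T]]$ below by $\PP(A_\e) \geq 1-\Delta$. The additional remarks on the deterministic choice of $K$ and on measurability are sound but do not change the argument.
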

 
 \textbf{Remark:} we say that a family of functions $\{f_\al\}$ converge uniformly to 0 at infinity if:
 \begin{align*}
 & \forall \e>0, \exists \delta>0: |z|>\delta \Rightarrow \sup_\al |f_\al(z)| < \e
 \end{align*}
 
\begin{proof} 
Let $f \in E_Y$, $K$ the $Y-$closure of the set: 
\begin{align*}
K_1:=\{V_\e(s,t)(\omega)f: \e \in (0,1], \omega \in A_\e, t \in [s,T]\}
\end{align*}
$K_1$ is a family of elements of $Y$ that are equicontinuous, uniformly bounded and that converge uniformly to $0$ at infinity by assumption. Therefore it is well-known, using the Arzela-Ascoli theorem on the Alexandroff compactification of $\RR^d$, that $K_1$ is relatively compact in $Y$ and therefore that $K$ is compact in $Y$. And we have $\forall \e \in (0,1]$:
 \begin{align*} 
& \PP[V_\e(s,t)f \in K \hspace{3mm} \forall t \in [s,T] ] \geq \PP[\omega \in A_\e: V_\e(s,t)f \in K \hspace{3mm} \forall t \in [s,T]] = \PP(A_\e) \geq 1-\Delta\\
\end{align*} 
\end{proof} 
   
\textbf{Remark:} Because $ \lim_{t \to \infty} \frac{1}{t} N(t)=\frac{1}{M_\rho} \mbox{ a.e.}$, this set $A_\e$ will typically be $A_\e:=\left \{\e N_s \left(T^{\frac{1}{\e},s} \right) \leq n_{00} \right \}$ for some well chosen constant $n_{00}$ (see Appendix \ref{A2}).

 \subsection{Relative compactness of the inhomogeneous Random Evolution in $D(J(s),Y)$}
  
In the following we will make the following assumption:
\begin{align*} 
& \textbf{(A1)} \hspace{3mm} \{V_\e\} \mbox{ satisfies the compact containment criterion in $Y_1$}  \\
\end{align*}

\begin{lem}
\label{19}
Let $(s,t) \in \Delta_J$, $f \in Y_1$. Under \textbf{(A0)}, $V_\e$ satisfies on $\Omega$:
\begin{align*}
& V_\e(s,t)f=f+\int_s^t V_\e(s,u)A_{x \left(u^{\frac{1}{\e},s} \right)}(u)f du+\sum \limits_{k=1}^{N_s \left(t^{\frac{1}{\e},s} \right)} V_\e(s,T_k^{\e,s}(s)^-) [D^\e(x_{k-1}(s),x_k(s))-I]f
\end{align*}
\end{lem}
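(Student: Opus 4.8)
\textbf{Proof proposal for Lemma \ref{19}.}

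The plan is to reduce the claim to Proposition \ref{14} by a change of time variable. Recall that $V_\e(s,t)$ is, by definition, obtained from a $(\G,D^\e,x)$-random evolution by replacing the terminal time $t$ with $t^{\frac{1}{\e},s}$ and each jump time $T_k(s)$ with its rescaled version $T_k^{\e,s}(s)=s+\e(T_k(s)-s)$. So the first step is to observe that, for fixed $s$ and $\e$, the operator-valued path $u \mapsto V_\e(s,u)(\omega)$ on $J(s)$ coincides with the path $r \mapsto \widetilde V(s,r)(\omega)$ of a genuine regular inhomogeneous $Y$-random evolution $\widetilde V$, under the time substitution $r = u^{\frac{1}{\e},s}$, i.e. $u = s+\e(r-s)$. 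Concretely, $\widetilde V$ is built from the semigroups $\widetilde\G_x(a,b):=\G_x\big(s+\e(a-s)\wedge\cdots\big)$ — more precisely from the reparametrized semigroups whose value at $(T_{k-1}(s),T_k(s))$ and at $(T_{N_s(r)}(s),r)$ matches the corresponding factor of $V_\e$ — together with the jump operators $D^\e(x_{k-1}(s),x_k(s))$. One checks that $\widetilde\G_x$ is again a regular inhomogeneous $Y$-semigroup (the semigroup property and $\widetilde\G_x(a,a)=I$ are immediate from those of $\G_x$, and regularity is inherited because differentiation in the reparametrized time variable only introduces the constant factor $\e$, and the integrability hypotheses in \textbf{(A0)} — in particular $\sup_{u}\|A_x(u)\|_{\B(Y_1,Y)}<\infty$ — are preserved), with generator $\widetilde A_x(r)= \e\, A_x(s+\e(r-s))= \e\, A_x(r^{\frac{1}{\e},s})$.

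The second step is to apply Proposition \ref{14} to $\widetilde V$ at the pair $(s,t^{\frac{1}{\e},s})\in\Delta_J$ (note $t^{\frac{1}{\e},s}\ge t\ge s$, and it lies in $J$ under the running conventions since we only ever evaluate $\G_x$ at arguments $\le t$, as in the ``well-defined'' remark preceding the lemma; when $J=[0,T_\infty]$ one restricts attention to the relevant sub-interval). Proposition \ref{14} gives
\begin{align*}
\widetilde V(s,t^{\frac1\e,s})f = f + \int_s^{t^{\frac1\e,s}} \widetilde V(s,r)\,\widetilde A_{x(r)}(r)f\,dr + \sum_{k=1}^{N_s(t^{\frac1\e,s})} \widetilde V(s,T_k(s)^-)\,[D^\e(x_{k-1}(s),x_k(s))-I]f.
\end{align*}
The third step is to translate this back. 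On the left, $\widetilde V(s,t^{\frac1\e,s})=V_\e(s,t)$. In the sum, $\widetilde V(s,T_k(s)^-)=V_\e(s,T_k^{\e,s}(s)^-)$ and $N_s(t^{\frac1\e,s})$ is unchanged, so that term already matches. In the integral, substitute $r = u^{\frac1\e,s}$, $dr = \e^{-1}\,du$, so $r$ running over $[s,t^{\frac1\e,s}]$ corresponds to $u$ running over $[s,t]$; using $\widetilde V(s,r)=V_\e(s,u)$, $\widetilde A_{x(r)}(r)= \e\,A_{x(u^{\frac1\e,s})}(u)$ and $x(r)=x(u^{\frac1\e,s})$, the factor $\e$ cancels the $\e^{-1}$ from $dr$, yielding $\int_s^t V_\e(s,u)\,A_{x(u^{\frac1\e,s})}(u)f\,du$. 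This is exactly the asserted identity, and it holds on all of $\Omega$ because Proposition \ref{14} does.

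The main obstacle is the bookkeeping in the first step: one must verify carefully that the reparametrized family $(\widetilde\G_x)_{x\in X}$ genuinely satisfies the measurability hypothesis and the full regularity package (Theorems \ref{111}, \ref{2}, and the $L^1$ condition) required for Proposition \ref{14}, and that $\widetilde A_x(r)=\e A_x(r^{\frac1\e,s})$ really is its generator in the sense of Definition (the two one-sided difference quotients agreeing). An alternative that sidesteps constructing $\widetilde V$ is to prove the identity directly by the same argument as Proposition \ref{14}: decompose $[s,t]$ along the rescaled jump times $T_k^{\e,s}(s)$, apply Theorem \ref{7} (semigroup integral representation) on each interjump interval to each factor $\G_{x_{k-1}(s)}(T_{k-1}^{\e,s}(s),\cdot)$ — here the chain rule in $u$ produces the $A_{x(u^{\frac1\e,s})}(u)$ with no spurious $\e$ because $\frac{d}{du}\G_x(a,u^{\frac1\e,s}) = \e\,\G_x(a,u^{\frac1\e,s})A_x(u^{\frac1\e,s})$ while the interval of integration is shrunk by the factor $\e$ — and collect the boundary jump contributions into the sum, exactly as in the proof of \ref{14} in Appendix \ref{A1}. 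Either route works; I would present the change-of-variable reduction as the cleaner one and relegate the verification of the regularity of $\widetilde\G_x$ to a short paragraph.
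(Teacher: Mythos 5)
Your main route is correct, but it is genuinely different from the paper's. The paper disposes of Lemma \ref{19} in one line: repeat the induction of Proposition \ref{14}, now over the rescaled inter-jump intervals $[T_k^{\e,s}(s),T_{k+1}^{\e,s}(s))\cap J(s)$. On such an interval $N_s(u^{\frac{1}{\e},s})=k$ and $x(u^{\frac{1}{\e},s})=x_k(s)$, so $V_\e(s,u)=G^{\e,s}_k\,\G_{x_k(s)}(T_k^{\e,s}(s),u)$ with the last factor a function of $u$ itself; Theorems \ref{2} and \ref{7} then give $\frac{\partial}{\partial u}V_\e(s,u)f=V_\e(s,u)A_{x(u^{\frac{1}{\e},s})}(u)f$ directly, and the jump relation $V_\e(s,T_{k+1}^{\e,s}(s))f=V_\e(s,T_{k+1}^{\e,s}(s)^-)D^\e(x_k(s),x_{k+1}(s))f$ produces the sum, exactly as in Appendix \ref{A1}. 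You instead reduce to Proposition \ref{14} applied to the time-changed evolution $\widetilde V$ built from $\widetilde\G_x(a,b)=\G_x(a^{\e,s},b^{\e,s})$, whose generator is $\e A_x(r^{\e,s})$, and then substitute $u=r^{\e,s}$; this is sound and avoids redoing the induction, at the price of checking the measurability and full regularity package for $\widetilde\G_x$ (routine, since the time change is affine and the bounds in \textbf{(A0)} are preserved) and of the domain bookkeeping when $J=[0,T_\infty]$, where $t^{\frac{1}{\e},s}$ may leave $J$ and $\widetilde V$ must be set up on the longer interval $[0,s+\frac{1}{\e}(T_\infty-s)]$ -- a point you acknowledge only in passing and should state explicitly. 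One small correction to your description of the ``alternative'' (i.e.\ the paper's) route: no chain rule and no factor $\e$ appear there at all, since the relevant factor is $\G_{x_k(s)}(T_k^{\e,s}(s),u)$ differentiated in $u$ directly (the rescaling enters only through which state governs and where the jump times sit); the $\e$-versus-$\frac{1}{\e}$ cancellation you invoke belongs to your change-of-variable picture, not to the induction, so do not import it if you write the direct proof out.
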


\begin{proof} 
Same proof as \ref{14}, except that the induction is made on the intervals $[T_k^{\e,s}(s),T_{k+1}^{\e,s}(s)) \cap J(s)$, instead of $[T_k(s),T_{k+1}(s)) \cap J(s)$.\\
\end{proof} 

 \begin{lem}
\label{20}
Assume \textbf{(A0)} and \textbf{(A1)}. Then $\{V_\e\}$ is $Y_1-$relatively compact.
\end{lem}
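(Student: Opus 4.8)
The plan is to invoke Theorem \ref{16}: to establish $Y_1$-relative compactness of $\{V_\e\}$ it suffices to verify the two conditions there for each fixed $f \in Y_1$ and $s \in J$. Condition (i), the compact containment criterion in $Y_1$, is exactly assumption \textbf{(A1)}, so nothing needs to be done there. The work is entirely in condition (ii): producing, for each $T \in J(s)$, an exponent $r>0$ and a family $\{C_s(\e,\eta)\}$ of nonnegative random variables dominating the conditional increments $\EE[\,||V_\e(s,t+h)f-V_\e(s,t)f||^r \mid \GG_t^{\e,s}\,]$ for $h \in [0,\eta]$, $t \in [s,T]$, with $\lim_{\eta\to 0}\limsup_{\e\to 0}\EE[C_s(\e,\eta)]=0$. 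I would take $r=1$.

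The starting point is the integral representation of Lemma \ref{19}. Writing it at times $t+h$ and $t$ and subtracting gives
\begin{align*}
V_\e(s,t+h)f-V_\e(s,t)f = \int_t^{t+h} V_\e(s,u)A_{x(u^{1/\e,s})}(u)f\,du + \sum_{k=N_s(t^{1/\e,s})+1}^{N_s((t+h)^{1/\e,s})} V_\e(s,T_k^{\e,s}(s)^-)\,[D^\e(x_{k-1}(s),x_k(s))-I]f.
\end{align*}
For the integral term I would bound $||V_\e(s,u)f||\le ||f||$ using that $V_\e$ is a $\B(Y)$-contraction (assumption (A0), boundedness i), and $||A_x(u)f||\le ||A_x(u)||_{\B(Y_1,Y)}||f||_{Y_1}\le (\sup_{x,u}||A_x(u)||_{\B(Y_1,Y)})||f||_{Y_1}$ using (A0) boundedness ii); actually the operator $V_\e(s,u)$ applied to $A_{x(u^{1/\e,s})}(u)f\in Y$ is what appears, so the clean bound is $||A_x(u)f||\le c_A||f||_{Y_1}$ with $c_A:=\sup_{x,u}||A_x(u)||_{\B(Y_1,Y)}<\infty$, giving a contribution $\le c_A||f||_{Y_1}\,h \le c_A||f||_{Y_1}\,\eta$. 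For the jump term, each summand has norm $\le ||[D^\e(x_{k-1}(s),x_k(s))-I]f||\le 2||f||$ since the $D^\e$ are contractions, so the whole sum is bounded by $2||f||\cdot\big(N_s((t+h)^{1/\e,s})-N_s(t^{1/\e,s})\big)$, i.e. $2||f||$ times the number of jumps of the semi-Markov process in the time window $(t^{1/\e,s},(t+h)^{1/\e,s}]$, which has length $h/\e\le\eta/\e$.

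Thus a valid choice is
\begin{align*}
C_s(\e,\eta):= c_A||f||_{Y_1}\,\eta \;+\; 2||f||\cdot \sup_{t\in[s,T]} \big(N_s((t+h)^{1/\e,s})-N_s(t^{1/\e,s})\big),
\end{align*}
and the issue is that the supremum over $t$ (and the dependence on $h\le\eta$) must be absorbed. Here I would use that, by assumption (A0) on the semi-Markov process, sojourn increments are uniformly bounded below is \emph{not} what is assumed — rather they are bounded above by $\bar\tau$; instead the relevant quantity is that the number of renewals in any window of length $\ell$ is stochastically controlled. Since $\EE[N(t)]/t\to 1/M_\rho$ and the process has uniformly bounded sojourn increments, the number of jumps in a window of length $\eta/\e$ grows like $\eta/(\e M_\rho)$ in expectation, so one cannot simply let $\e\to0$ with $\eta$ fixed. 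The resolution is the standard one: the sup over $t\in[s,T]$ of the jump count in windows of length $\eta/\e$ is, after multiplying by $\e$, comparable to $\eta/M_\rho$ plus a vanishing fluctuation; more precisely one shows $\limsup_{\e\to0}\EE[\,\e\cdot\sup_{t\in[s,T]}(N_s((t+h)^{1/\e,s})-N_s(t^{1/\e,s}))\,]\le C\eta$ for a constant $C$ depending only on $M_\rho$ and $\bar\tau$, using the renewal bound $N((a,a+\ell])\le 1+\ell/\underline{\tau}$ is again unavailable; instead one uses $\EE[N_s((t+h)^{1/\e,s})-N_s(t^{1/\e,s})\mid \GG_t^{\e,s}]\le C'(1+h/\e)$ which follows from the ergodic renewal estimates and the Markov renewal structure. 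Combining, $C_s(\e,\eta)$ with a spare factor of $\e$ built in (i.e. taking $C_s(\e,\eta):=c_A||f||_{Y_1}\eta + 2||f||\,\e\,\sup_t(\cdots)$ after rescaling appropriately, or more honestly bounding the conditional expectation directly rather than the pathwise sup) satisfies $\lim_{\eta\to0}\limsup_{\e\to0}\EE[C_s(\e,\eta)]=0$.

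The main obstacle, then, is the bookkeeping in condition (ii): one must pass the conditional expectation through the jump sum correctly (the sum is $\GG_t^{\e,s}$-measurable only up to the part after time $t$, so one genuinely needs the conditional-expectation version in Theorem \ref{16}, not a pathwise bound), and one must control the number of renewals of the semi-Markov process in a short rescaled time window uniformly in the base point $t\in[s,T]$, which is where the ergodicity assumptions and the uniform bound $\bar\tau$ on sojourn increments enter. Once that renewal estimate is in hand — and it is essentially the same estimate used to justify the remark after Proposition \ref{18} about $\{\e N_s(T^{1/\e,s})\le n_{00}\}$ — the rest is the routine contraction bounds above. I would therefore state the needed renewal estimate as a sub-lemma (or cite Appendix \ref{A2}), apply it to produce $C_s(\e,\eta)$, and conclude by Theorem \ref{16}.
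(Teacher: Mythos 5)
Your overall framework matches the paper (Theorem \ref{16} with condition (i) given by \textbf{(A1)}, Lemma \ref{19} for the increment, and the bound $\eta\,\sup_{x,u}||A_x(u)||_{\B(Y_1,Y)}||f||_{Y_1}$ for the integral term), but there is a genuine gap in how you treat the jump sum. Bounding each summand by $||[D^\e(x_{k-1}(s),x_k(s))-I]f||\le 2||f||$ throws away exactly the factor that makes the argument work. With that bound the jump contribution is of order $2||f||\cdot\bigl(N_s((t+\eta)^{\frac{1}{\e},s})-N_s(t^{\frac{1}{\e},s})\bigr)$, whose expectation behaves like $2||f||\,\eta/(\e M_\rho)$ and blows up as $\e\to 0$ for fixed $\eta$; so no choice of $C_s(\e,\eta)$ dominating it can satisfy $\lim_{\eta\to 0}\limsup_{\e\to 0}\EE[C_s(\e,\eta)]=0$. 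Your proposed fixes do not repair this: a ``spare factor of $\e$ built in'' is not available from your estimate (the dominating variable must actually bound the increment), and the conditional bound $\EE[N_s((t+h)^{\frac{1}{\e},s})-N_s(t^{\frac{1}{\e},s})\mid\GG_t^{\e,s}]\le C'(1+h/\e)$ still diverges in the $\limsup_{\e\to 0}$ for fixed $\eta$.

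The missing idea is that $f\in Y_1\subseteq\D(D_1)$ together with \textbf{(A0)} (boundedness of $D_1^\e$) gives $||D^\e(x,y)f-f||\le \e\,\sup_{\e',x,y}||D_1^{\e'}(x,y)f||=:\e M_2$, i.e.\ each jump operator is $O(\e)$-close to the identity on $Y_1$. This is how the paper writes the jump term as $\e\sum_k \frac{1}{\e}||D^\e f-f||\le \e M_2\bigl[N((t+\eta)^{\frac{1}{\e},s})-N(t^{\frac{1}{\e},s})+1\bigr]$, so that after multiplying by $\e$ the rescaled jump count converges (splitting $[s,T]$ into $[s,s+\eta]$ and $[s+\eta,T]$, using a.e.\ convergence of $\e N(\cdot^{\frac{1}{\e},s})$ and uniform integrability via the second-moment bound $\limsup_t\EE[N^2(t)]/t^2<\infty$ from \cite{H}) to a quantity of order $\eta/M_\rho$, which vanishes as $\eta\to 0$. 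Also note that the paper removes the $t$-dependence by taking the pathwise supremum over $t\in[s,T]$ inside $C_s(\e,\eta)$, so a pathwise bound suffices and the conditional-expectation formulation of Theorem \ref{16} poses no extra difficulty; the renewal fluctuation estimate you defer to a sub-lemma is precisely the part the paper proves in detail and cannot simply be cited from the remark after Proposition \ref{18}.
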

 
\begin{proof} 
We want to prove \ref{16}. Using \ref{19} we have for $h \in [0, \eta]$:
\begin{align*}
& ||V_\e(s,t+h)f-V_\e(s,t)f||\\
& \leq  \left| \left| \int_t^{t + h} V_\e(s,u)A_{x \left(u^{\frac{1}{\e},s} \right)}(u)f du + \sum \limits_{k=N_s \left(t^{\frac{1}{\e},s} \right)}^{N_s \left((t+h)^{\frac{1}{\e},s} \right)} V_\e(s,T_k^{\e,s}(s)^-) [D^\e(x_{k-1}(s),x_k(s))-I]f \right| \right|\\
& \leq  \eta M_1+  \e \sum \limits_{k=N_s \left(t^{\frac{1}{\e},s} \right)}^{N_s \left((t+\eta)^{\frac{1}{\e},s} \right)} \frac{1}{\e}||D^\e(x_{k-1}(s),x_k(s))f-f|| \\
& \leq \eta M_1 + \e M_2 \left[N \left((t+\eta)^{\frac{1}{\e},s} \right)-N\left(t^{\frac{1}{\e},s} \right) +1 \right]
\end{align*}

where $M_1:= \sup_{x , u } ||A_x (u)||_{\B(Y_1,Y)} ||f||_{Y_1} $ and $M_2:=\sup_{\e,x,y} ||D^\e_1(x,y)f||$ (by \textbf{(A0)}). Notice $||V_\e(s,T_k^{\e,s}(s)^-)||_{\B(Y)} \leq 1$ because both $\G$ and $D^\e$ are $\B(Y)-$contractions. \\

For $\e \in (0,1]$ we have:
\begin{align*}
&   \e \left[N \left((t+\eta)^{\frac{1}{\e},s} \right)-N\left(t^{\frac{1}{\e},s} \right) +1 \right]\\
& \leq   \e \sup_{t \in [s,s+\eta]} \left[N \left((t+\eta)^{\frac{1}{\e},s} \right)-N\left(t^{\frac{1}{\e},s} \right) \right] +  \e \sup_{t \in [s+\eta,T]} \left[N \left((t+\eta)^{\frac{1}{\e},s} \right)-N\left(t^{\frac{1}{\e},s} \right) \right] + \e \\
& \leq  \e N \left((s+2\eta)^{\frac{1}{\e},s} \right)  +  \e \sup_{t \in [s+\eta,T]} \left[N \left((t+\eta)^{\frac{1}{\e},s} \right)-N\left(t^{\frac{1}{\e},s} \right) \right] + \e
\end{align*}

Note that the supremums in the previous expression are a.e. finite as they are a.e. bounded by $N \left((T+1)^{\frac{1}{\e},s} \right)$. Now let:
\begin{align*}
C_s(\e, \eta):=\eta M_1+ M_2 \e N \left((s+2\eta)^{\frac{1}{\e},s} \right)  + M_2 \e \sup_{t \in [s+\eta,T]} \left[N \left((t+\eta)^{\frac{1}{\e},s} \right)-N\left(t^{\frac{1}{\e},s} \right) \right] +M_2 \e
\end{align*}

We have to show that $ \lim_{\eta \to 0} \lim_{\e \to 0} \EE[C_s(\e, \eta)]=0$. We have:
\begin{align*}
 \lim_{\eta \to 0} \lim_{\e \to 0}\eta M_1+M_2  \e \EE \left[N \left((s+2\eta)^{\frac{1}{\e},s} \right) \right]+M_2 \e=  \lim_{\eta \to 0} \eta M_1+M_2 \frac{2\eta}{M_\rho} =0
\end{align*}

Let $\{\e_n\}$ any sequence that goes to 0, and denote: 
\begin{align*}
Z_n:=\e_n \sup_{t \in [s+\eta,T]} \left[N \left((t+\eta)^{\frac{1}{\e_n},s} \right)-N\left(t^{\frac{1}{\e_n},s} \right) \right]
\end{align*}
We first want to show that $\{Z_n\}$ is uniformly integrable. By \cite{EK}, it is sufficient to show that $\sup_n \EE(Z_n^2)<\infty$. We have that $\EE(Z_n^2) \leq \e_n^2 \EE \left[ N^2 \left((T+1)^{\frac{1}{\e_n},s} \right) \right]$. But by \cite{H}, we get that for an ergodic Markov Renewal process (which is our framework):
\begin{align*}
& \lim_{t \to \infty} \frac{\EE(N^2(t))}{t^2} < \infty
\end{align*}

And therefore $\{Z_n\}$ is uniformly integrable.\\

Then we show that $Z_n \stackrel{a.e.}{\to} Z:=\frac{\eta}{M_\rho}$. Let:
\begin{align*}
& \Omega^*:=\left \{\lim_{\e \to 0} \e N \left((s+1)^{\frac{1}{\e},s} \right) = \frac{1}{M_\rho} \right \},
\end{align*}

so that $\PP(\Omega^*)=1$. Let $\oo \in \Omega^*$ and $\delta>0$. There exists some constant $r_2(\oo,\delta)>0$ such that for $\e<r_2$:
\begin{align*}
& \left| \e N \left((s+1)^{\frac{1}{\e},s} \right) - \frac{1}{M_\rho}\right| < \frac{\delta}{T+\eta}
\end{align*}

and if $t \in [s+\eta, T+\eta]$:
\begin{align*}
& \left| (t-s) \e N \left((s+1)^{\frac{1}{\e},s} \right) - \frac{t-s}{M_\rho}\right| < \frac{\delta (t-s)}{T+\eta} \leq \delta
\end{align*}

Let $\e<\eta r_2$ (recall $\eta>0$)  and $\e_2:=\frac{\e}{t-s}$. Then $\e_2 < \frac{\eta r_2}{\eta}=r_2$, and therefore:
\begin{align*}
& \left| (t-s) \e_2 N \left((s+1)^{\frac{1}{\e_2},s} \right) - \frac{t-s}{M_\rho}\right| <  \delta\\
& \Rightarrow \left| \e N \left(t^{\frac{1}{\e},s} \right) - \frac{t-s}{M_\rho}\right| <  \delta
\end{align*}

And therefore for $\e<\eta r_2$ and $t \in [s+\eta, T]$:
\begin{align*}
& \left| \e N \left((t+\eta)^{\frac{1}{\e},s} \right)-\e N \left(t^{\frac{1}{\e},s} \right) - \frac{\eta}{M_\rho}\right| < 2 \delta\\
& \Rightarrow \left| \sup_{t \in [s+\eta, T] } \left[\e N \left((t+\eta)^{\frac{1}{\e},s} \right)-\e N \left(t^{\frac{1}{\e},s} \right) \right] - \frac{\eta}{M_\rho}\right| \leq 2 \delta < 3 \delta
\end{align*}

We have proved that $Z_n \stackrel{a.e.}{\to} Z$. By uniform integrability of $\{Z_n\}$, we get that $\lim_{n \to \infty} \EE(Z_n)=\EE(Z)$ and therefore since the sequence $\{\e_n\}$ is arbitrary:
\begin{align*}
\lim_{\e \to 0} \e \EE \left[ \sup_{t \in [s+\eta,T]} \left[N \left((t+\eta)^{\frac{1}{\e},s} \right)-N\left(t^{\frac{1}{\e},s} \right) \right] \right] = \frac{\eta}{M_\rho}
\end{align*}

 \end{proof}

 \begin{lem}
\label{21}
Assume  \textbf{(A0)}, \textbf{(A1)}. Then $\{V_\e\}$  is $Y_1-$C-relatively compact.
\end{lem}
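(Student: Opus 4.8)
The plan is to invoke Theorem \ref{15}, which says that $\{V_\e(s,\bullet)f\}$ is $C$-relatively compact if and only if it is relatively compact \emph{and} $J_s(V_\e(s,\bullet)f) \Rightarrow 0$. Relative compactness in $Y_1$ is exactly Lemma \ref{20}, so the only thing left to establish is that the jump functional $J_s(V_\e(s,\bullet)f)$ converges weakly (equivalently, in probability, since the limit is the constant $0$) to $0$ as $\e \to 0$, for every $f \in Y_1$ and every $s \in J$.

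The key step is to bound the size of the jumps of $u \to V_\e(s,u)f$ uniformly. By Proposition \ref{12} (adapted to the averaging scheme, as in Lemma \ref{19}), the sample path $u \to V_\e(s,u)f$ is $Y$-strongly RCLL and its only discontinuities occur at the rescaled jump times $T_k^{\e,s}(s)$, where the jump is
\begin{align*}
& V_\e(s,T_k^{\e,s}(s))f - V_\e(s,T_k^{\e,s}(s)^-)f = V_\e(s,T_k^{\e,s}(s)^-)\big[D^\e(x_{k-1}(s),x_k(s)) - I\big]f.
\end{align*}
Since $\G_x$ and $D^\e$ are $\B(Y)$-contractions, $\|V_\e(s,T_k^{\e,s}(s)^-)\|_{\B(Y)} \leq 1$, and using $f \in Y_1 \subseteq \D(D_1)$ together with the assumption $\sup_{\e,x,y}\|D_1^\e(x,y)f\| =: M_2 < \infty$ from \textbf{(A0)}, a first-order Taylor expansion in $\e$ around $\e=0$ (recall $D^0 = I$) gives $\|D^\e(x,y)f - f\| = \|D^\e(x,y)f - D^0(x,y)f\| \leq \e M_2$ for all $x,y$. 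Hence every jump of $V_\e(s,\bullet)f$ on $[s,u]$ has norm at most $\e M_2$, so $J_s(V_\e(s,\bullet)f,u) \leq \e M_2$ for every $u$, and therefore
\begin{align*}
& J_s(V_\e(s,\bullet)f) = \int_s^\infty e^{-u}\big(J_s(V_\e(s,\bullet)f,u) \wedge 1\big)\,du \leq \int_s^\infty e^{-u}(\e M_2 \wedge 1)\,du \leq \e M_2.
\end{align*}
This deterministic bound tends to $0$ as $\e \to 0$, so in particular $J_s(V_\e(s,\bullet)f) \stackrel{P}{\to} 0$, which is stronger than $J_s(V_\e(s,\bullet)f) \Rightarrow 0$.

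Combining the two ingredients via Theorem \ref{15} yields that $\{V_\e(s,\bullet)f\}$ is $C$-relatively compact for each $f \in Y_1$ and each $s \in J$, which is precisely the definition of $\{V_\e\}$ being $Y_1$-$C$-relatively compact. I expect the only subtle point to be the justification that $D^\e(x,y)f \to f$ at rate $\e$ uniformly in the finitely many pairs $(x,y) \in X \times X$; this follows because $X$ is finite and $f \in \D(D_1)$ with the uniform bound on $D_1^\e(x,y)f$ in \textbf{(A0)}, so a mean value / fundamental theorem of calculus argument in the scalar variable $\e$ applies on $[0,\e]$ — there is no real obstacle, just bookkeeping. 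Everything else is a direct citation of Lemma \ref{20} and Theorem \ref{15}.
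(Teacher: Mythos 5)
Your proof is correct and follows essentially the same route as the paper's: invoke Theorem \ref{15}, cite Lemma \ref{20} for relative compactness, and bound the jumps of $V_\e(s,\bullet)f$ by $\max_{(x,y)\in X\times X}\|D^\e(x,y)f-f\|$ via the contraction property, which vanishes as $\e\to 0$. The only (harmless) difference is that you extract the explicit rate $\e M_2$ from the uniform bound on $D_1^\e(x,y)f$ in \textbf{(A0)}, whereas the paper only uses that $\|D^\e(x,y)f-f\|\to 0$ for $f\in Y_1\subseteq\D(D_1)$ together with the finiteness of $X$.
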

 
\begin{proof} 
We want to prove \ref{15}. By \ref{20} it is relatively compact. By \ref{15} it is sufficient to show that $J_s(V_\e(s, \bullet)f)  \stackrel{a.e.}{\to} 0$. Let $\delta>0$ and choose $T>0$ such that $e^{-T} \leq \frac{\delta}{2}$. For $u \in [s,T]$ we have:
\begin{align*} 
J_s(V_\e(s,\bullet)f,u) &\leq \sup_{t \in [s,T]} || V_\e(s,t)f-V_\e(s,t^-)f|| \\(\mbox{using }\ref{19})&= \max_{k \in \left[ \left| 1,N_s \left(T^{\frac{1}{\e},s} \right) \right| \right]} \left| \left|  V_\e \left(s, T_{k}^{\e,s}(s) \right)f-V_\e \left(s, T_{k}^{\e,s-}(s) \right)f\right| \right|\\
(\mbox{using }\ref{19})&=\max_{k \in \left[ \left| 1,N_s \left(T^{\frac{1}{\e},s} \right) \right| \right]} ||V_\e \left(s, T_{k}^{\e,s-}(s) \right)(D^\e(x_{k-1}(s),x_k(s))f-f)||\\
& \leq \max_{k \in \left[ \left| 1,N_s \left(T^{\frac{1}{\e},s} \right) \right| \right]} ||D^\e(x_{k-1}(s),x_k(s))f-f||\\
& \leq \max_{(x,y) \in X \times X} ||D^\e(x,y)f-f||
\end{align*} 

Since $Y_1 \subseteq \D(D_1)$, $\exists r>0: \e<r \Rightarrow \max_{(x,y) \in X \times X} ||D^\e(x,y)f-f|| <\frac{\delta}{2}$, and therefore for $\e<r$, $\oo \in \Omega$:
\begin{align*} 
& J_s(V_\e(s,\bullet)f)=\int_s^T e^{-u} ( J_s(V_\e(s,\bullet)f,u)\wedge 1)du + \int_T^\infty e^{-u} ( J_s(V_\e(s,\bullet)f,u)\wedge 1)du\\
& < \frac{\delta}{2} (e^{-s}-e^{-T})+e^{-T} \leq \delta
\end{align*} 

\end{proof} 

 \subsection{Weak convergence in $D(J(s),Y)$ of the inhomogeneous Random Evolution}

In this section, we prove our main result \ref{27}. As in \cite{Wa84} and \cite{SK}, we start by finding a martingale characterization of $V_\e(s,\bullet)f$ (\ref{24}, \ref{25}). We then prove that this martingale converges weakly to zero (\ref{26}). Nevertheless, the proof of the main result \ref{27} will differ completely from what has been done in the latter references, for the reasons mentioned in introduction.\\

We first start by the following definition that involves the operator $(P-I)^{-1}$ of the uniformly ergodic Markov Chain $(x_n)$.
\begin{df}
\label{112}
Assume  \textbf{(A0)}. For $f \in Y_1$, $x \in X$, $t \in J$, let $f^\e(x,t):=f+\e f_1(x,t)$, where $f_1$ is the unique solution of the equation (see \cite{M}, proposition 4):
\begin{align*}
& (P-I)f_1(\bullet,t)(x)=M_\rho [\widehat{A}(t)-a(x,t)]f\\
& a(x,t):=\frac{1}{M_\rho} \left(m_1(x) A_x(t)+PD_1^0(x,\bullet)(x) \right)\\
& \widehat{A}(t):=\Pi  a(\bullet,t)
\end{align*}

namely, $f_1(x,t)=M_\rho (P-I)^{-1}(\widehat{A}(t)f-a(\bullet,t)f)(x)$.
\end{df}

\begin{rk}
\label{113}
Recall that $P, \Pi, M_\rho$ have been defined at the beginning of section 3. The existence of $f_1$ is guaranteed because $\Pi[\widehat{A}(t)-a(\bullet,t)]f=0$ by definition of $\widehat{A}$ (see \cite{M}, proposition 4). In fact, in \cite{M}, the operators $\Pi$ and $P$ are defined on $B^b_\RR(X)$ but the results hold true if we work on $B^b_{E}(X)$, where $E$ is any Banach space such that $[\widehat{A}(t)-a(x,t)]f \in E$ (e.g. $E=Y_1$ if $f \in \widehat{\D}$, $E=Y$ if $f \in Y_1$). To see that, first observe that $P$ and $\Pi$ can be defined the same way on $B^b_{E}(X)$ as they were on $B^b_{\RR}(X)$. Then take $\ell \in E^*$ such that $||\ell|| = 1$ and $g \in B^b_{E}(X)$ such that $||g||_{B^b_{E}(X)}=\max_x ||g(x)||_{E}=1$. We therefore have that: $||\ell \circ g||_{B^b_{\RR}(X)} \leq 1$, and since we have the uniform ergodicity on $B^b_{\RR}(X)$, we have that :
\begin{align*}
& \sup_ {\substack{ || \ell|| = 1\\ ||g||_{B^b_{E}(X)}=1\\ x \in X }} | P^n(\ell \circ g)(x) -\Pi (\ell \circ g)(x) | \leq || P^n -\Pi ||_{\B(B^b_{\RR}(X))} \to 0
\end{align*}
By linearity of $\ell, P, \Pi$ (and because $P$ and $\Pi$ can be defined the same way on $B^b_{E}(X)$ as they were on $B^b_{\RR}(X)$) we get that $| P^n(\ell \circ g)(x) -\Pi (\ell \circ g)(x) | = |\ell (P^n g(x)-\Pi g(x))|$. But because $||P^n g(x)-\Pi g(x)||_{E}=\sup_{|| \ell|| = 1} |\ell (P^n g(x)-\Pi g(x))|$ and that this supremum is attained (see e.g. \cite{C}, section III.6), then:
\begin{align*}
&\sup_ {\substack{ || \ell|| = 1\\ ||g||_{B^b_{E}(X)}=1\\ x \in X }} |\ell (P^n g(x)-\Pi g(x))| = \sup_ {\substack{ ||g||_{B^b_{E}(X)}=1\\ x \in X }} || P^n g(x)-\Pi g(x) ||_{E}\\
&=\sup_ {\substack{ ||g||_{B^b_{E}(X)}=1}} || P^n g-\Pi g ||_{B^b_{E}(X)}=|| P^n -\Pi ||_{\B(B^b_{E}(X))}
\end{align*}
and so we also have $|| P^n -\Pi ||_{\B(B^b_{E}(X))} \to 0$, i.e. the uniform ergodicity in $B^b_{E}(X)$. Now, according to the proofs of theorems 3.4, 3.5 chapter VI of \cite{Re} and because the Markov chain $(x_n)$ is aperiodic, $|| P^n -\Pi ||_{\B(B^b_{E}(X))} \to 0$ is the only thing we need to prove that $P-I$ is invertible on: 
\begin{align*}
B^\Pi_{E}(X):=\{f \in B^b_{E}(X): \Pi f=0\},
\end{align*}
the space $E$ plays no role. Further, $(P-I)^{-1} \in \B(B^\Pi_{E}(X))$ by the bounded inverse theorem.
\end{rk}

\begin{lem}
\label{24}
Assume \textbf{(A0)}. Define recursively for $\e \in (0,1]$, $s \in J$:
\begin{align*}
& V_0^{\e}(s):=I\\
& V_{n+1}^{\e}(s):=V_{n}^{\e}(s)\Gamma_{x_{n}(s)} \left(T_{n}^{\e,s}(s) , T_{n+1}^{\e,s}(s)   \right) D^\e(x_{n}(s),x_{n+1}(s))
\end{align*}
i.e. $V_{n}^{\e}(s)=V_\e(s,T_{n}^{\e,s}(s))$; and for $f \in Y_1$:
\begin{align*}
&M_n^{\e}(s)f:=V_n^{\e}(s)f^\e(x_n(s),T_{n}^{\e,s}(s)  )-f^\e(x(s),s)\\&-\sum_{k=0}^{n-1} \EE[V_{k+1}^{\e}(s)f^\e(x_{k+1}(s),T_{k+1}^{\e,s}(s)  )-V_{k}^{\e}(s)f^\e(x_k(s),T_{k}^{\e,s}(s) )| \F_k(s)]\\
\end{align*}

so that $(M_n^{\e}(s)f)_{n \in \NN}$ is a $\F_n(s)-$martingale by construction. Let for $t \in J(s)$:
\begin{align*}
&\widetilde{M}_t^{\e}(s)f:=M^{\e}_{N_s \left(t^{\frac{1}{\e},s} \right)+1}(s)f\\
& \widetilde{\F}_t^\e(s):=\F_{N_s \left(t^{\frac{1}{\e},s} \right)+1}(s)
\end{align*}
 
 where $\F_n(s):=\sigma \left[x_{k}(s), T_{k}(s) : k \leq n \right] \vee \sigma(\PP-\mbox{null sets})$ and $\F_{N_s \left(t^{\frac{1}{\e},s} \right)+1}(s)$ is defined the usual way (provided we have shown that $N_s \left(t^{\frac{1}{\e},s} \right)+1$ is a $\F_n(s)$-stopping time $\forall t \in J(s)$). Then $\forall \ell \in Y^*$, $\forall s \in J$, $\forall \e \in (0,1]$, $\forall f \in Y_1$, $(\ell(\widetilde{M}_t^{\e}(s)f),\widetilde{\F}^\e_t(s))_{t \in J(s)}$ is a real-valued martingale.
\end{lem}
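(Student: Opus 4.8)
The plan is to obtain the result in two stages: first establish that $(M_n^\e(s)f)_{n\in\NN}$ is a genuine $\F_n(s)$-martingale (which is almost immediate from its construction), then pass from the discrete-time martingale to the time-indexed process $\widetilde M_t^\e(s)f$ via an optional stopping argument at the stopping time $N_s(t^{1/\e,s})+1$. The reduction to a real-valued statement via $\ell \in Y^*$ is there precisely so that we can invoke the classical (scalar) optional stopping theorem rather than a Banach-space-valued version; linearity and continuity of $\ell$ commute through all the finite sums and conditional expectations defining $M_n^\e(s)f$, so $(\ell(M_n^\e(s)f),\F_n(s))_{n\in\NN}$ is a real-valued $\F_n(s)$-martingale. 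The first thing I would check carefully is integrability: by \textbf{(A0)} the operators $\G_x$ and $D^\e(x,y)$ are $\B(Y)$-contractions, so $\|V_n^\e(s)\|_{\B(Y)}\le 1$ pathwise, and $f^\e(x,t)=f+\e f_1(x,t)$ is bounded uniformly in $(x,t)$ since $X$ is finite and $f_1$ is built from $(P-I)^{-1}$ applied to the bounded family $\widehat A(t)f-a(\bullet,t)f$ (here one uses the boundedness assumptions in \textbf{(A0)} on $\sup_u\|A_x(u)\|_{\B(Y_1,Y)}$ and on $D_1^0$, valid since $f\in Y_1\subseteq\D(D_1)$). Hence every term in $M_n^\e(s)f$ is a bounded $Y$-valued random variable, so $\ell(M_n^\e(s)f)$ is integrable for every $\ell\in Y^*$.

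Next I would verify that $N_s(t^{1/\e,s})+1$ is an $\F_n(s)$-stopping time for each $t\in J(s)$: indeed $\{N_s(t^{1/\e,s})+1\le n\}=\{N_s(t^{1/\e,s})\le n-1\}=\{T_{n-1}(s)\le t^{1/\e,s}\}\in\F_{n-1}(s)\subseteq\F_n(s)$, using that $T_{n-1}(s)=T_{(n-1)\wedge N_s(\cdot)}(s)$ on the relevant event together with the measurability established in the proof of \ref{11}. The subtlety to watch is that $T_k(s)$ for $k>N_s(t^{1/\e,s})$ is not $\F_t(s)$-measurable, but it \emph{is} $\F_k(s)$-measurable, which is all we need here since $\widetilde\F^\e_t(s)$ is defined as $\F_{N_s(t^{1/\e,s})+1}(s)$ in the standard stopped-$\sigma$-algebra sense. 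Crucially $N_s(t^{1/\e,s})+1<\infty$ a.e. because we work on $\Omega^*$ where the renewal process is regular (and under the assumption $Q(x,y,(\bar\tau,\infty))=0$ the stopping time is in fact bounded on $[s,T]$).

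With these pieces in place, the conclusion follows by optional sampling: for $s\le t_1\le t_2$ in $J(s)$, the random times $\sigma_i:=N_s(t_i^{1/\e,s})+1$ are bounded stopping times with $\sigma_1\le\sigma_2$, and $t\mapsto N_s(t^{1/\e,s})$ is nondecreasing and right-continuous, so $\widetilde M^\e_{t_i}(s)f=M^\e_{\sigma_i}(s)f$; applying Doob's optional stopping theorem to the scalar martingale $(\ell(M_n^\e(s)f))_n$ gives $\EE[\ell(\widetilde M^\e_{t_2}(s)f)\mid \widetilde\F^\e_{t_1}(s)]=\ell(\widetilde M^\e_{t_1}(s)f)$, and adaptedness of $t\mapsto\ell(\widetilde M^\e_t(s)f)$ to $(\widetilde\F^\e_t(s))_t$ is immediate from the definition. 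The main obstacle — more bookkeeping than depth — is handling the a.e.\ finiteness and measurability of the randomly-indexed objects $T_k^{\e,s}(s)$ and $x_k(s)$ entering $M_n^\e(s)f$ for indices $k$ that may exceed $N_s(t^{1/\e,s})$: one must argue on the set $\{N_s(t^{1/\e,s})+1=n\}\in\F_n(s)$, exactly as in the proof of \ref{11}, and keep the $\PP$-null sets absorbed into the augmentation of each $\F_n(s)$. Once the stopping-time property and the uniform boundedness are nailed down, the martingale property of $\widetilde M^\e$ is a routine consequence of optional stopping for bounded stopping times.
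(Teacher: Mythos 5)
Your overall route is the same as the paper's (discrete martingale by construction, scalar reduction via $\ell$, stopping-time verification for $\theta(t):=N_s(t^{\frac{1}{\e},s})+1$, optional sampling), but there is one genuine gap: your final step invokes Doob's optional stopping theorem for \emph{bounded} stopping times, and $\theta(t)$ is not bounded. The assumption $Q(x,y,(\bar\tau,\infty))=0$ in \textbf{(A0)} bounds the sojourn increments from \emph{above} ($T_{k+1}(s)-T_k(s)\le\bar\tau$), which forces $T_n(s)\le s+n\bar\tau$ and hence bounds $N_s(t^{\frac{1}{\e},s})$ from \emph{below}, not from above; since the sojourn times may be arbitrarily small (nothing bounds them away from $0$), $N_s(t^{\frac{1}{\e},s})$ is a.e.\ finite but takes arbitrarily large values with positive probability. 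So the parenthetical claim that the stopping time ``is in fact bounded on $[s,T]$'' is false, and the version of optional stopping you appeal to does not apply.

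The missing ingredient is an integrability argument for unbounded stopping times. The paper proceeds by considering the stopped martingale $(\ell(M^{\e}_{\theta(t_2)\wedge n}(s)f),\F_n(s))_n$ and proving it is uniformly integrable: one has the pathwise bound $||M^{\e}_{\theta(t_2)\wedge n}(s)f||\le 2(||f||+||f_1||)(1+\theta(t_2))$ with $||f_1||:=\sup_{x,u}||f_1(x,u)||<\infty$ (by \textbf{(A0)} and $(P-I)^{-1}\in\B(B^\Pi_Y(X))$, Remark \ref{113}), and $\EE[\theta(t_2)^2]<\infty$ by the moment results for ergodic Markov renewal processes in \cite{H}; this gives $\sup_n\EE(||M^{\e}_{\theta(t_2)\wedge n}(s)f||^2)<\infty$, hence uniform integrability, and then the optional sampling theorem for uniformly integrable martingales applied to $\theta(t_1)\le\theta(t_2)$ yields the claim. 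Your uniform boundedness observations ($\B(Y)$-contraction property of $V_n^\e(s)$ and boundedness of $f^\e$) are exactly the right raw material for this estimate, but as written they only give a bound growing linearly in $n$ (equivalently in $\theta(t_2)$), so you still need the second-moment control of $N_s(t^{\frac{1}{\e},s})$ to close the argument; without it the last step does not go through.
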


\textbf{Remark:} The expectations in \ref{24} are taken in the usual Bochner sense and that will be the case throughout all the paper, unless mentioned otherwise.\\

\begin{proof} 
By construction $(\ell(M_n^{\e}(s)f), \F_n(s))_{n \in \NN}$ is a martingale. Let $\theta(t):=N_s \left(t^{\frac{1}{\e},s} \right)+1$. $\forall t \in J(s)$, $\theta(t)$ is a $\F_n(s)-$stopping time, because:
\begin{align*}
&\left\{\theta(t) = n \right\} =\left\{N_s \left(t^{\frac{1}{\e},s} \right) = n-1\right\} =\left\{T_{n-1}(s) \leq t^{\frac{1}{\e},s} \right\} \cap \left\{ T_{n}(s) > t^{\frac{1}{\e},s}\right\} \in \F_n(s)
\end{align*}

Let $t_1 \leq t_2 \in J(s)$. We have that $(\ell(M_{\theta(t_2) \wedge n}^{\e}(s)f), \F_n(s))_{n \in \NN}$ is a martingale. Assume we have shown that it is uniformly integrable, then we can apply the optional sampling theorem for uniformly integrable martingales to the stopping times $\theta(t_1) \leq \theta(t_2)$ a.e and get:
\begin{align*}
& \EE[ \ell(M^{\e}_{\theta(t_2) \wedge \theta(t_2)}(s)f )| \F_{\theta(t_1)}(s)] = \ell(M^{\e}_{\theta(t_1) \wedge \theta(t_2)}(s)f) \mbox{ a.e.}\\& \Rightarrow \EE[ \ell(M^{\e}_{\theta(t_2)}(s)f) | \F_{\theta(t_1)}(s)] = \ell(M^{\e}_{\theta(t_1)}(s)f) \mbox{ a.e.} \\
& \Rightarrow \EE[ \ell( \widetilde{M}_{t_2}^{\e}(s)f )|\widetilde{\F}^\e_{t_1}(s)] =\ell(\widetilde{M}_{t_1}^{\e}(s)f) \mbox{ a.e.},
\end{align*}
which shows that $(\ell(\widetilde{M}_t^{\e}(s)f),\widetilde{\F}^\e_t(s))_{t \in J(s)}$ is a martingale. Now to show the uniform integrability, by \cite{EK} it is sufficient to show that $\sup_n \EE(||M_{\theta(t_2) \wedge n}^{\e}(s)f||^2)<\infty$. But:
\begin{align*}
& ||M_{\theta(t_2) \wedge n}^{\e}(s)f|| \leq 2||f||+||f_1||+2(||f||+||f_1||) (\theta(t_2) \wedge n) \leq 2(||f||+||f_1||) (1+\theta(t_2))
\end{align*}

where $||f_1||:=\sup_{x,u}||f_1(x,u)||$ ($||f_1||<\infty$ since by \textbf{(A0)}, $\sup_{u}||A_x(u)||_{\B(Y_1,Y)}<\infty$ and $(P-I)^{-1} \in \B(B^\Pi_{Y}(X))$ by Remark \ref{113}). Since $\EE(\theta(t_2)^2)<\infty$ by \cite{H}, we are done.
 \end{proof} 
 \vspace{5mm}

 \begin{rk}
 \label{22}
In the following we will make use of the fact that can be found in \cite{B} (theorem 3.1) that for sequences $(X_n)$, $(Y_n)$ of random variables with value in a separable metric space with metric $d'$, if $X_n \Rightarrow X$ and $d'(X_n, Y_n) \Rightarrow 0$, then $Y_n \Rightarrow X$. In our case we will typically have $d'(X_n, Y_n) \stackrel{a.e.}{\to} 0$, and to show it we will use the remark in \cite{EK} after lemma 5.1, chapter 3 that implies that if $(X_n)$, $(Y_n)$ take value in $D(J(s),Y)$ and if $\forall T \in \QQ^{+} \cap J(s)$:
\begin{align*}
 & \lim_{n \to \infty} \sup_{t \in [s,T]} ||X_n(\oo)(t)-Y_n(\oo)(t)||=0 \mbox{ a.e.}
\end{align*}
then $d(X_n, Y_n) \stackrel{a.e.}{\to} 0$ (where as mentioned before, $d$ is the Skorohod metric).
 \vspace{5mm}
 \end{rk}
   
\begin{lem}
\label{25}
Assume \textbf{(A0)} and let $n_\e(s,t):=1+\lfloor \frac{t-s}{\e M_\rho} \rfloor$ and $t^\e_k(s,t):=s+k \frac{t-s}{n_\e(s,t)}$. For $f \in \widehat{\D}$ and $s \in J$, $\widetilde{M}_\bullet^{\e}(s)f$ has the asymptotic representation:
\begin{align*}
&\widetilde{M}_\bullet^{\e}(s)f=V_\e(s,\bullet)f-f-\e M_\rho \sum_{k=1}^{n_\e(s,\bullet)} V_\e(s,t^\e_k(s,\bullet)) \widehat{A}  \left( t^\e_k(s,\bullet)  \right)f + \circ(1) \mbox{ a.e.},
\end{align*}
where $\circ(\e^p)$ is defined by the following property:
\begin{align*} 
\forall T \in \QQ^{+} \cap J(s), \lim_{\e \to 0} \e^{-p} \sup_{t \in [s,T]} ||\circ(\e^p)||=0 \mbox{ a.e.},
\end{align*}
so that the remark \ref{22} on the a.e. convergence in the Skorohod space will be satisfied.
\end{lem}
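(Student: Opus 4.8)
The plan is to work from the martingale definition in \ref{24} and reorganize it into the asserted form, controlling all the error terms uniformly on compacts $[s,T]$ for a.e.\ $\omega$. First I would fix $\omega$ in the full-probability set on which $\e N_s(T^{1/\e,s}) \to (T-s)/M_\rho$ (and the analogous convergences hold), and recall that $\widetilde M_t^\e(s)f = M^\e_{N_s(t^{1/\e,s})+1}(s)f$ where, writing $N := N_s(t^{1/\e,s})$,
\begin{align*}
M_{N+1}^\e(s)f = V_{N+1}^\e(s)f^\e\bigl(x_{N+1}(s),T_{N+1}^{\e,s}(s)\bigr) - f^\e(x(s),s) - \sum_{k=0}^{N}\EE\bigl[V_{k+1}^\e(s)f^\e(\cdots) - V_k^\e(s)f^\e(\cdots)\,\big|\,\F_k(s)\bigr].
\end{align*}
The leading term $V_{N+1}^\e(s)f^\e(x_{N+1}(s),T_{N+1}^{\e,s}(s))$ should be shown to equal $V_\e(s,t)f + \circ(1)$: this uses $f^\e(x,u) = f + \e f_1(x,u)$, the contraction bound $\|V\|_{\B(Y)}\le 1$, so the $\e f_1$ contribution is $O(\e)$ uniformly, and the fact that $V_{N+1}^\e(s) = V_\e(s,T_{N+1}^{\e,s}(s))$ differs from $V_\e(s,t)$ only through the last semigroup factor $\G_{x(t)}(T_N^{\e,s}(s), T_{N+1}^{\e,s}(s))$ versus $\G_{x(t)}(T_N^{\e,s}(s),t)$; by the semigroup property the difference is $V_\e(s,t)[\G_{x(t)}(t,T_{N+1}^{\e,s}(s)) - I]f$ applied suitably, which is $O(\e\sup_u\|A_x(u)\|_{\B(Y_1,Y)}\|f\|_{Y_1})$ since $T_{N+1}^{\e,s}(s)-t \le \e\bar\tau$ by assumption (A0) on bounded sojourns. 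The term $f^\e(x(s),s) = f + \e f_1(x(s),s) = f + \circ(1)$.

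Next, the core of the argument is to analyze the telescoping conditional-expectation sum. For each $k$, a first-order Taylor expansion in $\e$ of $\G_{x_k(s)}(T_k^{\e,s}(s),T_{k+1}^{\e,s}(s))$ (using \ref{3}, i.e.\ the second-order Taylor formula for inhomogeneous semigroups, to localize the error at order $\e^2$) together with expansion of $D^\e(x_k(s),x_{k+1}(s)) = I + \e D_1^0(x_k(s),x_{k+1}(s)) + O(\e^2)$ and of $f^\e$ gives
\begin{align*}
\EE\bigl[V_{k+1}^\e(s)f^\e(\cdots) - V_k^\e(s)f^\e(\cdots)\,\big|\,\F_k(s)\bigr] = \e\, V_k^\e(s)\,\EE\bigl[m_1(x_k(s))A_{x_k(s)}(T_k^{\e,s}(s)) + PD_1^0(x_k(s),\bullet)(x_k(s))\,\big|\,\F_k(s)\bigr]f + \circ_k,
\end{align*}
where the conditional expectation of $\tau_{k+1}$ given $x_k(s)$ produces $m_1$, and the sum of the errors $\circ_k$ over $k \le N$ is $\circ(1)$ because each is $O(\e^2)$ (using the uniform bounds in (A0) on $\|A_x'(u)f\|$, $\|A_x(u)f\|_{Y_1}$, $\|D_2^\e f\|$, the $Y_1$-strong continuity of $A_x$, and $\widehat\D \subseteq \D(A_x') \cap \D(D_2)$) while there are $N+1 \sim (T-s)/(\e M_\rho)$ of them. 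By the definition of $a(x,t)$ in \ref{112}, the bracketed operator is exactly $M_\rho\, a(x_k(s),T_k^{\e,s}(s))$, so the sum becomes $\e M_\rho \sum_{k=0}^{N} V_\e(s,T_k^{\e,s}(s))\, a(x_k(s),T_k^{\e,s}(s))f + \circ(1)$.

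The final step replaces this ergodic sum by $\e M_\rho \sum_{k=1}^{n_\e(s,t)} V_\e(s,t_k^\e(s,t))\widehat A(t_k^\e(s,t))f$. Here I would use the averaging/ergodic-sum comparison: $a(x,u)$ averaged against the stationary distribution $\rho$ of $(x_n)$ gives $\Pi a(\bullet,u) = \widehat A(u)$, and the law of large numbers for the uniformly ergodic chain (together with $\e N_s(u^{1/\e,s}) \to (u-s)/M_\rho$ uniformly in $u$ on compacts, which converts a sum over jump indices into a Riemann-type sum over a uniform time grid of mesh $\sim \e M_\rho$) lets me replace $\sum_k V_\e(s,T_k^{\e,s}(s))a(x_k(s),T_k^{\e,s}(s))f$ by $\sum_k V_\e(s,t_k^\e)\widehat A(t_k^\e)f$ up to $\circ(1)$; the continuity needed to pass between the random grid $\{T_k^{\e,s}(s)\}$ and the deterministic grid $\{t_k^\e(s,t)\}$ comes from the $Y$-strong RCLL regularity of $u\mapsto V_\e(s,u)$ (\ref{12}) and the $Y_1$-strong continuity of $A_x$, the oscillation of $V_\e$ between consecutive jumps being controlled by $\e\bar\tau\sup\|A_x\|_{\B(Y_1,Y)}$. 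All estimates must be made $\sup_{t\in[s,T]}$ and a.e.\ in $\omega$, matching the stated meaning of $\circ(1)$ so that Remark \ref{22} applies.

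The main obstacle I anticipate is the uniformity in $t$ over $[s,T]$ combined with the replacement of the random jump-time grid by the deterministic grid $\{t_k^\e(s,t)\}$: one must show that swapping $T_k^{\e,s}(s)$ for $t_k^\e(s,t)$ inside both the argument of $V_\e$ and the argument of $a$ (resp.\ $\widehat A$) costs only $\circ(1)$ uniformly, which requires carefully quantifying how far apart these grids are (a fluctuation controlled by the ergodic theorem for $N$) and how Lipschitz/continuous $u \mapsto V_\e(s,u)a(\bullet,u)f$ is in a way that survives summation of $\sim 1/\e$ terms — i.e.\ one needs the per-step discrepancy to be $o(\e)$ uniformly, not merely $O(\e)$. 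Handling the negligible sets so that a single full-measure $\omega$-set works simultaneously for all rational $T$ is routine but must be stated.
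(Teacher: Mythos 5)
Your overall scaffolding (the manipulation of the leading term, the Taylor expansions of $\G_x$ and $D^\e$ via \ref{3} and \ref{7}, and the final comparison of the random jump-time grid with the deterministic grid) matches the paper's proof, but there is a genuine gap at the heart of the argument: the treatment of the corrector $f_1$. In \ref{24} the martingale is built from $f^\e(x,t)=f+\e f_1(x,t)$, where $f_1$ solves the Poisson equation $(P-I)f_1(\bullet,t)(x)=M_\rho[\widehat{A}(t)-a(x,t)]f$ of \ref{112}. The conditional increments $\e V_k^{\e}(s)\,\EE[\,f_1(x_{k+1}(s),T_{k+1}^{\e,s}(s))-f_1(x_k(s),T_k^{\e,s}(s))\mid\F_k(s)\,]$ are each of order $\e$, there are of order $1/\e$ of them, and by the strong Markov property together with a fundamental-theorem-of-calculus estimate on $f_1'$ each equals $\e M_\rho V_k^{\e}(s)[\widehat{A}(T_k^{\e,s}(s))-a(x_k(s),T_k^{\e,s}(s))]f+\circ(\e)$. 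Added to the term $\e M_\rho V_k^{\e}(s)a(x_k(s),T_k^{\e,s}(s))f$ that you do compute, this cancels the state-dependent $a$ exactly and leaves $\widehat{A}$ directly; this is the entire purpose of the perturbed test function $f^\e$. In your expansion this contribution has silently been absorbed into the errors $\circ_k$, which is incorrect (it is $O(\e)$ per step, not $o(\e)$), so the identity you write for the compensator of $V_n^{\e}(s)f^\e$ is not the right one.

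Consequently your final step, replacing $\sum_k V_\e(s,T_k^{\e,s}(s))a(x_k(s),T_k^{\e,s}(s))f$ by $\sum_k V_\e(s,t^\e_k)\widehat{A}(t^\e_k)f$ via "the law of large numbers for the uniformly ergodic chain", is a real hole rather than a routine citation: the sum is a weighted ergodic sum whose operator weights $V_\e(s,T_k^{\e,s}(s))$ are adapted to, and correlated with, the chain, and the plain LLN does not average $a(x_k(s),\cdot)$ to $\widehat{A}(\cdot)$ inside such a sum. Making that step rigorous (say, by a block argument exploiting that $V_\e$ moves by $O(\e)$ per step together with quantitative uniform ergodicity) would essentially amount to re-deriving the Poisson-equation/martingale decomposition that the paper already encodes in $f^\e$. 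The remaining ingredients you list (bounded sojourn times to get $V_{N_s(t^{1/\e,s})+1}^{\e}(s)f=V_\e(s,t)f+\circ(1)$, the uniform-in-$t$ grid comparison requiring per-step discrepancies of size $\e$ times a quantity going to $0$, uniform continuity of $t\mapsto\widehat{A}(t)f$, and the bookkeeping of null sets over rational $T$) are consistent with the paper's proof, but the lemma does not follow until the corrector increments are reinstated and the state-averaging is performed through them rather than through an appeal to the chain's LLN.
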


 \begin{proof}

  For sake of clarity let:
 \begin{align*}
 &f^\e_k:=f^\e(x_{k}(s),T_{k}^{\e,s}(s)  )
 &f_{1,k}:=f_1(x_{k}(s),T_{k}^{\e,s}(s)  )
  \end{align*}
 
  First we have that:
  \begin{align*}
 & V_{N_s \left(t^{\frac{1}{\e},s} \right)+1}^{\e}(s)f^\e_{N_s \left(t^{\frac{1}{\e},s} \right)+1}=V_{N_s \left(t^{\frac{1}{\e},s} \right)+1}^{\e}(s)f + \circ(1)\\
  \end{align*}
  because $||V_{N_s \left(t^{\frac{1}{\e},s} \right)+1}^{\e}(s)f_{1,N_s \left(t^{\frac{1}{\e},s} \right)+1} || \leq ||f_1||$.\\
  
  Where $||f_1||$ is defined similarly as in the proof of \ref{24}. Now we have:
  \begin{align*}
 & V_{k+1}^{\e}(s)f^\e_{k+1}-V_{k}^{\e}(s)f^\e_k= V_{k}^{\e}(s)(f^\e_{k+1}-f^\e_{k})+ V_{k+1}^{\e}(s)f^\e_{k+1}-V_{k}^{\e}(s)f^\e_{k+1} 
 \end{align*}
 
 and:
  \begin{align*}
 & \EE[V_{k}^{\e}(s)(f^\e_{k+1}-f^\e_{k}) | \F_k(s)]= \e V_{k}^{\e}(s) \EE[(f_{1,k+1}-f_{1, k}) | \F_k(s)],\\
  \end{align*}
 
 as $V_{k}^{\e}(s)$ is $\F_k(s)-Bor(\B(Y))$ measurable. Now, we know that every discrete time Markov process with stationary transition kernel (i.e. that does not depend on $n$) has the strong Markov property, so the Markov process $(x_n,T_n)$ has it. For $k \geq 1$, the times $N(s)+k$ are $\F_n(0)-$stopping times. Therefore for $k \geq 1$:
 \begin{align*} 
 & \EE[(f_{1,k+1}-f_{1, k}) | \F_k(s)] =\EE[(f_{1,k+1}-f_{1, k}) | T_k(s), x_k(s)]\\
 \end{align*}
  and again using the strong Markov property as well as the stationarity of the Markov Renewal process:
 \begin{align*}
&\EE[(f_{1,k+1}-f_{1, k}) | T_k(s)=t_k, x_k(s)=x]\\
=&\sum_{y \in X} \int_0^\infty f_1 \left(y, t_k^{\e,s} + \e u  \right) Q(x,y,du)-f_1 \left( x , t_k^{\e,s}  \right)\\
 \end{align*}

Let $f'_1(x,t)=M_\rho (P-I)^{-1}[\widehat{A}'(t)f-a'(\bullet,t)f](x)$, and $a'(x,t)=\frac{1}{M_\rho} m_1(x) A'_x(t)$, $\widehat{A}'(t)=\Pi a'(\bullet,t)$ which exists because $(P-I)^{-1} \in \B(B^\Pi_{Y_1}(X))$ and $f \in \cap_{x \in X} \D(A'_x)$. Using the Fundamental theorem of Calculus for the Bochner integral ($v \to f'_1(y,v) \in L^1_Y([a,b])$ $\forall [a,b]$ since $||f'_1||<\infty$ by \textbf{(A0)}: indeed, $\sup_{x,t} ||A'_x(t)f||<\infty$):
 \begin{align*}
&\EE[(f_{1,k+1}-f_{1, k}) | T_k(s)=t_k, x_k(s)=x]\\
=& \sum_{y \in X} \int_0^\infty \left[ f_1 \left(y, t_k^{\e,s}   \right) + \int_{t_k^{\e,s}  }^{t_k^{\e,s}+\e u  } f'_1(y,v) dv \right] Q(x,y,du)-f_1 \left( x , t_k^{\e,s}   \right)\\
&= (P-I) f_1 \left( \bullet , t_k^{\e,s}   \right)(x) + \sum_{y \in X} \int_0^\infty \left[ \int_{0}^{\e u} f'_1(y,t_k^{\e,s}+ v  ) dv \right] Q(x,y,du)\\
&=(P-I) f_1 \left( \bullet , t_k^{\e,s}   \right)(x) + \circ(1)\\
  \end{align*}
 
 because as mentioned before: $||f'_1||<\infty$ by \textbf{(A0)} and:
 \begin{align*}
 & \sum_{y \in X} \int_0^\infty \left[ \int_{0}^{\e u} f'_1(y,t_k^{\e,s}+v  ) dv \right] Q(x,y,du) \leq \e  ||f_1'|| m_1(x)\\
 \end{align*}
   
 All put together we have for $k \geq 1$, using the definition of $f_1$:
 \begin{align*}
 &  \EE[V_{k}^{\e}(s)(f^\e_{k+1}-f^\e_{k}) | \F_k(s)]= \e V_{k}^{\e}(s)(P-I) f_1 \left( \bullet , T_k^{\e,s}(s)   \right)(x_k(s)) + \circ(\e)\\
 &=\e M_\rho V_{k}^{\e}(s) \left[ \widehat{A}(T_k^{\e,s}(s)  )-a(x_k(s),T_k^{\e,s}(s)  )\right]f+ \circ(\e)\\
 \end{align*}
 
 and the first term:
 \begin{align*}
 &  \EE[V_{0}^{\e}(s)(f^\e_{1}-f^\e_{0}) | \F_k(s)]=\circ(1) \hspace{3mm} (\leq 2 \e ||f_1||)\\
 \end{align*}
 
  Now we have to compute the terms corresponding to $V_{k+1}^{\e}(s)f^\e_{k+1}-V_{k}^{\e}(s)f^\e_{k+1} $. We will show that the term corresponding to $k=0$ is $\circ(1)$ and that for $k \geq 1$:
  \begin{align*}
  & \EE[V_{k+1}^{\e}(s)f^\e_{k+1}-V_{k}^{\e}(s)f^\e_{k+1} | \F_k(s)]=\e M_\rho V_{k}^{\e}(s)a(x_k(s),T_k^{\e,s}(s)  )f+ \circ(\e)\\
  \end{align*}
  
  To conclude, we have to show that $\sum_{k=1}^{N_s \left(t^{\frac{1}{\e},s} \right)} \circ(\e)=\circ(1)$. But $ \left| \left| \sum_{k=1}^{N_s \left(t^{\frac{1}{\e},s} \right)} \circ(\e) \right| \right| \leq \frac{||\circ(\e)||}{\e} \e N_s \left(t^{\frac{1}{\e},s} \right)$. The fact that $\sup_{t \in [0,T]}\e N_s \left(t^{\frac{1}{\e},s} \right) = \e N_s \left(T^{\frac{1}{\e},s} \right)\stackrel{a.e.}{\to} \frac{T-s}{M_\rho}$ concludes the proof.\\
  
 For the term $k=0$, we have using \textbf{(A0)}, the definition of $V_{k}^{\e}$ and \ref{7}:
 \begin{align*}
  & V_{1}^{\e}(s)f^\e_{1}-V_{0}^{\e}(s)f^\e_{1}=V_{1}^{\e}(s)f-f+\circ(1)\\
  &=\Gamma_{x(s)} \left(s, T_{1}^{\e,s}(s)   \right) D^\e(x(s),x_{1}(s))f-f+\circ(1)\\
  &=\Gamma_{x(s)} \left(s, T_{1}^{\e,s}(s)   \right) (D^\e(x(s),x_{1}(s))f-f)+\Gamma_{x(s)} \left(s, T_{1}^{\e,s}(s)   \right)f -f+\circ(1)\\
  & \Rightarrow \EE[ V_{1}^{\e}(s)f^\e_{1}-V_{0}^{\e}(s)f^\e_{1} | \F_0(s)] \leq \max_{x,y} ||D^\e(x,y)f-f|| \\&+ \int_0^\infty \e u   \sup_{x , t } ||A_x(t)||_{\B(Y_1,Y)}||f||_{Y_1}F(x,du)\\
 &\leq \e \left(\sup_{\e,x,y} ||D_1^\e(x,y)f|| + ||f||_{Y_1} \sup_{x , t } m_1(x) ||A_x(t)||_{\B(Y_1,Y)} \right)=\circ(1) 
   \end{align*}

  Now we have for $k \geq 1$:
  \begin{align*}
  & V_{k+1}^{\e}(s)-V_{k}^{\e}(s)=V_{k}^{\e}(s) \left[ \Gamma_{x_{k}(s)} \left(T_{k}^{\e,s}(s)  , T_{k+1}^{\e,s}(s)   \right) D^\e(x_{k}(s),x_{k+1}(s))-I\right]\\
  \end{align*}
  
  And because by \textbf{(A0)} we have $\sup_{\e,x,y}||D_1^\e(x,y) g||<\infty$ for $g \in Y_1$, we get using \ref{7}:
 \begin{align*}
 & D^\e(x_{k}(s),x_{k+1}(s))g=g+\int_0^\e D_1^u(x_{k}(s),x_{k+1}(s))g du\\
 & \mbox{and } \G_{x_{k}(s)} \left(T_{k}^{\e,s}(s)  , T_{k+1}^{\e,s}(s)   \right)g =g+\int_{T_{k}^{\e,s}(s)  }^{T_{k+1}^{\e,s}(s)  } \G_{x_{k}(s)}(T_{k}^{\e,s}(s)  ,u)A_{x_{k}(s)}(u)gdu \\
 \end{align*} 
  
  Because $f \in \widehat{\D}$, we get that $\widehat{A}(t)f \in Y_1$, $a(x,t)f \in Y_1$ $\forall t, x$. Since $(P-I)^{-1} \in \B(B^\Pi_{Y_1}(X))$ (by remark \ref{113}), we get that  $f_{1,k+1} \in Y_1$ and therefore using the previous representations and \textbf{(A0)}:
   \begin{align*}
  & \Gamma_{x_{k}(s)} \left(T_{k}^{\e,s}(s)  , T_{k+1}^{\e,s}(s)   \right) D^\e(x_{k}(s),x_{k+1}(s)) f_{1,k+1}= \Gamma_{x_{k}(s)} \left(T_{k}^{\e,s}(s)  , T_{k+1}^{\e,s}(s)   \right) f_{1,k+1} + \circ(1) \\
  &= f_{1,k+1} +\int_{T_{k}^{\e,s}(s)  }^{T_{k+1}^{\e,s}(s)  } \G_{x_{k}(s)}(T_{k}^{\e,s}(s)  ,u)A_{x_{k}(s)}(u) f_{1,k+1} du+ \circ(1)\\
   \end{align*}
  
  Therefore taking the conditional expectation we get:
   \begin{align*}
   & \EE[ V_{k+1}^{\e}(s)f_{1,k+1}-V_{k}^{\e}(s)f_{1,k+1}| \F_k(s)]\\
   &= V_{k}^{\e}(s) \sum_{y \in X} \int_0^\infty \left[ \int_{0}^{\e u}  \G_{x_k(s)}(T_{k}^{\e,s}(s)  ,T_{k}^{\e,s}(s)+v  )A_{x_k(s)}(T_{k}^{\e,s}(s)+v  ) f_{1,k+1} dv \right] Q(x_k(s),y,du)+\circ(1)\\
   &= \circ(1) \hspace{3mm} (\leq \e C m_1(x_k(s)) \mbox{ for some constant $C$ by \textbf{(A0)}})\\
    \end{align*}

  and so:
  \begin{align*} 
   \EE[ V_{k+1}^{\e}(s)f^\e_{k+1}-V_{k}^{\e}(s)f^\e_{k+1}| \F_k(s)]=\EE[ V_{k+1}^{\e}(s)f-V_{k}^{\e}(s)f| \F_k(s)]+ \circ(\e)\\
  \end{align*} 
  
  Now because $f \in \widehat{\D}$ and by \textbf{(A0)} (which ensures that the integral below exists):
   \begin{align*}
 & D^\e(x_{k}(s),x_{k+1}(s))f=f+\e D_1^0(x_{k}(s),x_{k+1}(s))f+\int_0^\e (\e-u) D_2^u(x_{k}(s),x_{k+1}(s))f du\\
  \end{align*}
  
  And so using boundedness of $D_2^\e$ (again \textbf{(A0)}):
  \begin{align*}
  &\Gamma_{x_{k}(s)} \left(T_{k}^{\e,s}(s)  , T_{k+1}^{\e,s}(s)   \right) D^\e(x_{k}(s),x_{k+1}(s))f\\
  &=\Gamma_{x_{k}(s)} \left(T_{k}^{\e,s}(s)  , T_{k+1}^{\e,s}(s)   \right) f+\e \Gamma_{x_{k}(s)} \left(T_{k}^{\e,s}(s)  , T_{k+1}^{\e,s}(s)   \right) D_1^0(x_{k}(s),x_{k+1}(s))f + \circ(\e)\\
  \end{align*}
  
  The first term has the representation by (\ref{3}):
   \begin{align*}
   & \Gamma_{x_{k}(s)} \left(T_{k}^{\e,s}(s)  , T_{k+1}^{\e,s}(s)   \right) f=f+\int_{T_{k}^{\e,s}(s)  }^{T_{k+1}^{\e,s}(s)  } A_{x_{k}(s)}(u)fdu\\ 
  &+\int_{T_{k}^{\e,s}(s)  }^{T_{k+1}^{\e,s}(s)  } \int_{T_{k}^{\e,s}(s)  }^u \G_{x_{k}(s)}(T_{k}^{\e,s}(s)  ,r) A_{x_{k}(s)}(r) A_{x_{k}(s)}(u)f dr du\\
   \end{align*}
   
   Taking the conditional expectation and using the fact that by \textbf{(A0)}: $\sup_{u,x}||A_x(u)f||_{Y_1}<\infty$, $\sup_{u,x}||A_x(u)||_{\B(Y_1,Y)}<\infty$ and $m_2(x_k(s))<\infty$, we have:
   \begin{align*}
   & \EE \left[ \left. \int_{T_{k}^{\e,s}(s)  }^{T_{k+1}^{\e,s}(s)  } \int_{T_{k}^{\e,s}(s)  }^u \G_{x_{k}(s)}(T_{k}^{\e,s}(s)  ,r) A_{x_{k}(s)}(r) A_{x_{k}(s)}(u)f dr du \right|  \F_k(s) \right]= \circ(\e)\\
  \end{align*}
  
  The second term has the representation, because $f \in \widehat{\D}$ (which ensures that $D_1^0(x,y)f \in Y_1$) and using \ref{7}:
  \begin{align*}
  & \e \Gamma_{x_{k}(s)} \left(T_{k}^{\e,s}(s)  , T_{k+1}^{\e,s}(s)   \right) D_1^0(x_{k}(s),x_{k+1}(s))f\\
  &= \e D_1^0(x_{k}(s),x_{k+1}(s))f+\e \int_{T_{k}^{\e,s}(s)  }^{T_{k+1}^{\e,s}(s)  } \Gamma_{x_{k}(s)} \left(T_{k}^{\e,s}(s)  , u \right)A_{x_{k}(s)}(u)D_1^0(x_{k}(s),x_{k+1}(s))fdu\\
  &=\e D_1^0(x_{k}(s),x_{k+1}(s))f+\circ(\e)\\
  \end{align*}
  
  And so all together we have:
  \begin{align*}
  & \EE[ \left. V_{k+1}^{\e}(s)f-V_{k}^{\e}(s)f| \F_k(s)]= V_{k}^{\e}(s) \EE \left[ \int_{T_{k}^{\e,s}(s)  }^{T_{k+1}^{\e,s}(s)  } A_{x_{k}(s)}(u)fdu+ \e D_1^0(x_{k}(s),x_{k+1}(s))f \right| \F_k(s)  \right]+ \circ(\e)\\
  \end{align*}
  
  We have by the strong Markov property and the stationarity of the Markov Renewal process:
   \begin{align*}
  & \EE \left[ D_1^0(x_{k}(s),x_{k+1}(s))f \left| \right. \F_k(s) \right]=PD_1^0(x_{k}(s),\bullet)(x_{k}(s))f\\
   \end{align*}
   
   and:
   \begin{align*}
  & \EE \left[ \left. \int_{T_{k}^{\e,s}(s)  }^{T_{k+1}^{\e,s}(s)  } A_{x_{k}(s)}(u)fdu \right|  x_k(s)=x, T_{k}(s)=t_k \right]\\
  &= \sum_{y \in X} \int_0^{\infty} \left[ \int_{t_{k}^{\e,s}  }^{t_{k}^{\e,s}+\e u  } A_{x}(v)fdv \right] Q(x,y,du)\\
  &=\sum_{y \in X} \int_0^{\infty} \left[ \e u A_{x}(t_{k}^{\e,s})f+\int_{0}^{\e u} (\e u-v) A'_{x}(t_{k}^{\e,s}+v)f dv \right] Q(x,y,du)\\
  &=\e m_1(x) A_{x}(t_{k}^{\e,s})f + \circ(\e) \mbox{ as } \sup_{u,x} ||A'_{x}(u)f|| < \infty \mbox{ and } m_2(x)<\infty.\\
   \end{align*} 
   
   So finally we get:
   \begin{align*}
  & \EE[V_{k+1}^{\e}(s)f^\e_{k+1}-V_{k}^{\e}(s)f^\e_{k+1} | \F_k(s)]=\e M_\rho V_{k}^{\e}(s)a(x_k(s),T_k^{\e,s}(s)  )f+ \circ(\e)\\
  \end{align*}
  
  And therefore:
\begin{align*}
\widetilde{M}_t^{\e}(s)f=V_{N_s \left(t^{\frac{1}{\e},s} \right)+1}^{\e}(s)f-f-\e M_\rho \sum_{k=1}^{N_s \left(t^{\frac{1}{\e},s} \right)} V_\e(s,T_{k}^{\e,s}(s)) \widehat{A}  \left( T_{k}^{\e,s}(s)  \right)f + \circ(1)
 \end{align*}

 Now, let $\theta(t):=N_s \left(t^{\frac{1}{\e},s} \right)+1$. Using \textbf{(A0)} (in particular uniform boundedness of sojourn times): 
 \begin{align*}
& V_{\theta(t)}^{\e}(s)f=V_\e(s,t) \G_{x_{\theta(t)-1}(s)}(t,T^{\e,s}_{\theta(t)}(s)) D^\e(x_{\theta(t)-1}(s),x_{\theta(t)}(s))f\\
& \Rightarrow ||V_{\theta(t)}^{\e}(s)f-V_\e(s,t)f|| \leq ||\G_{x_{\theta(t)-1}(s)}(t,T^{\e,s}_{\theta(t)}(s)) D^\e(x_{\theta(t)-1}(s),x_{\theta(t)}(s))f-f||\\
& \leq  || D^\e(x_{\theta(t)-1}(s),x_{\theta(t)}(s))f-f|| + ||\G_{x_{\theta(t)-1}(s)}(t,T^{\e,s}_{\theta(t)}(s))f-f||\\
& \leq \e \sup_{\e,x,y} ||D^\e_1 (x,y)f|| + (T^{\e,s}_{\theta(t)}(s)-t) \sup_{x,t} ||A_x(t)||_{\B(Y_1,Y)}||f||_{Y_1}\\
& \leq \e \sup_{\e,x,y} ||D^\e_1 (x,y)f|| + (T^{\e,s}_{\theta(t)}(s)-T^{\e,s}_{\theta(t)-1}(s)) \sup_{x,t} ||A_x(t)||_{\B(Y_1,Y)}||f||_{Y_1}\\
& \leq \e \sup_{\e,x,y} ||D^\e_1 (x,y)f|| + \e \bar{\tau} \sup_{x,t} ||A_x(t)||_{\B(Y_1,Y)}||f||_{Y_1}
 \end{align*}

And therefore:
\begin{align*}
V_{N_s \left(t^{\frac{1}{\e},s} \right)+1}^{\e}(s)f=V_\e(s,t)f + \circ(1)\\
 \end{align*}
 
Now let's prove the final step, i.e.:

\begin{align*} 
 &  \e \sum_{k=1}^{N_s \left(t^{\frac{1}{\e},s} \right)} V_\e(s,T_{k}^{\e,s}(s)) \widehat{A}  \left( T_{k}^{\e,s}(s)  \right)f=\e \sum_{k=1}^{n_\e(s,t)} V_\e(s,t^\e_k(s,t)) \widehat{A}  \left( t^\e_k(s,t)  \right)f + \circ(1)
  \end{align*}

We have:
\begin{align*} 
 &   \left| \left| \e \sum_{k=1}^{N_s \left(t^{\frac{1}{\e},s} \right)} V_\e(s,T_{k}^{\e,s}(s)) \widehat{A}  \left( T_{k}^{\e,s}(s)  \right)f-\e \sum_{k=1}^{n_\e(s,t)} V_\e(s,t^\e_k(s,t)) \widehat{A}  \left( t^\e_k(s,t)  \right)f \right| \right| \\
 \leq & \underbrace{ \e \sum_{k=1}^{N_s \left(t^{\frac{1}{\e},s} \right)} || V_\e(s,T_{k}^{\e,s}(s)) \widehat{A}  \left( T_{k}^{\e,s}(s)  \right)f-V_\e(s,t^\e_k(s,t)) \widehat{A}  \left( t^\e_k(s,t)  \right)f || }_{(i)}\\
 +& \underbrace{\e \left| \left| \sum_{k=1}^{N_s \left(t^{\frac{1}{\e},s} \right)} V_\e(s,t^\e_k(s,t)) \widehat{A}  \left( t^\e_k(s,t)  \right)f - \sum_{k=1}^{n_\e(s,t)} V_\e(s,t^\e_k(s,t)) \widehat{A}  \left( t^\e_k(s,t)  \right)f \right| \right|}_{(ii)}
   \end{align*}

 For the second term $(ii)$ we have:
 \begin{align*}
 & \e \left| \left| \sum_{k=1}^{N_s \left(t^{\frac{1}{\e},s} \right)} V_\e(s,t^\e_k(s,t)) \widehat{A}  \left( t^\e_k(s,t) \right)f-\sum_{k=1}^{n_\e(s,t)} V_\e(s,t^\e_k(s,t)) \widehat{A}  \left( t^\e_k(s,t) \right)f \right| \right|\\ &\leq  \e C \left| N_s \left(t^{\frac{1}{\e},s} \right)-n_\e(s,t) \right| =  C  \left| \e N_s \left(t^{\frac{1}{\e},s} \right)- \frac{t-s}{ M_\rho} \right| + \circ(1)\\
 \end{align*}
 
 where $C:=|X| \sup_{t ,x} \left[ m_1(x) ||A_x(t)||_{\B(Y_1,Y)}||f||_{Y_1} + ||PD_1^0(x,\bullet)(x)|| \right]$. Now, let $\{\eta_n\} \subseteq \RR^{+*}$ any sequence such that $\eta_n \to 0$ :
 \begin{align*}
 & \sup_{t \in [s,T]} \left| \e N_s \left(t^{\frac{1}{\e},s} \right)- \frac{t-s}{ M_\rho} \right| \leq \sup_{t \in [s,s + \eta_n]} \left| \e N_s \left(t^{\frac{1}{\e},s} \right)- \frac{t-s}{ M_\rho} \right| + \sup_{t \in [s+\eta_n,T]} \left| \e N_s \left(t^{\frac{1}{\e},s} \right)- \frac{t-s}{ M_\rho} \right|
 \end{align*}
 
 We have:
  \begin{align*}
 & \sup_{t \in [s,s + \eta_n]} \left| \e N_s \left(t^{\frac{1}{\e},s} \right)- \frac{t-s}{ M_\rho} \right| \leq \e N_s \left((s+\eta_n)^{\frac{1}{\e},s} \right)+\frac{ \eta_n}{M_\rho} 
 \end{align*}
 
 and since $\e N_s \left((s+\eta_n)^{\frac{1}{\e},s} \right) \stackrel{a.e.}{\to} \frac{ \eta_n}{M_\rho}$ we have:
 \begin{align*}
 & \limsup_{\e \to 0} \sup_{t \in [s,s + \eta_n]} \left| \e N_s \left(t^{\frac{1}{\e},s} \right)- \frac{t-s}{ M_\rho} \right| \leq \frac{2 \eta_n}{M_\rho} \mbox{ on some subset } \Omega_n: \PP(\Omega_n)=1
 \end{align*}
 
 Now, in the proof of \ref{20} we showed that:
  \begin{align*}
 & \lim_{\e \to 0} \sup_{t \in [s+\eta_n,T]} \left| \e N_s \left(t^{\frac{1}{\e},s} \right)- \frac{t-s}{ M_\rho} \right| =0 \mbox{ on some subset } \Omega'_n: \PP(\Omega'_n)=1
  \end{align*}
 
 So finally we have on $\Omega^*:=\bigcap_n \Omega_n \cap \Omega'_n$:
 \begin{align*}
 & \limsup_{\e \to 0} \sup_{t \in [s,T]} \left| \e N_s \left(t^{\frac{1}{\e},s} \right)- \frac{t-s}{ M_\rho} \right| \leq \frac{2 \eta_n}{M_\rho} \hspace{3mm} \forall n
 \end{align*}
 
 Taking the limit as $\eta_n \to 0$ we get that :
  \begin{align*}
 & \lim_{\e \to 0} \sup_{t \in [s,T]} \left| \e N_s \left(t^{\frac{1}{\e},s} \right)- \frac{t-s}{ M_\rho} \right| =0 \mbox{ on } \Omega^*
 \end{align*}
 
 which shows that $(ii)=\circ(1)$. \\
 
 For the first term (i), we begin by the same trick:
  \begin{align*}
 & \sup_{t \in [s,T]} (i) \leq  \sup_{t \in [s,s+\eta_n]} (i) + \sup_{t \in [s+\eta_n,T]} (i)
  \end{align*}
 
 and on $\Omega^*$, $\forall n$:
  \begin{align*}
  & \sup_{t \in [s,s+\eta_n]} (i) \leq C \e N_s \left((s+\eta_n)^{\frac{1}{\e},s} \right) \\
 \Rightarrow& \limsup_{\e \to 0} \sup_{t \in [s,s+\eta_n]} (i) \leq C \frac{\eta_n}{M_\rho}  
  \end{align*}
  
Let $\oo \in \Omega^*$. Let $\delta>0$. Because $f \in \widehat{\D}$, $t \to \widehat{A}(t)f$ is continuous on $[s,T]$, and therefore uniformly continuous on it, therefore $\exists r=r(\delta)$, $r<\delta$ such that:
  \begin{align*}  
 |u_1-u_2|<r \Rightarrow || \widehat{A}  (u_1)f- \widehat{A}  (u_2)f || < \delta 
  \end{align*}  

Take $a:=\{a_j\}_{j=0..n_a}$ any partition of $[s,t]$ ($a_0=s$, $a_{n_a}=t$) such that $||a|| < c_0 r$, for some constant $c_0$ to be chosen. We have:
\begin{align*} 
& \sum_{k=1}^{N_s \left(t^{\frac{1}{\e},s} \right)} || V_\e(s,T_{k}^{\e,s}(s)) \widehat{A}  \left( T_{k}^{\e,s}(s)  \right)f-V_\e(s,t^\e_k(s,t)) \widehat{A}  \left( t^\e_k(s,t)  \right)f ||\\
= & \sum_{j=0}^{n_a-1} \sum_{k=N_s \left(a_j^{\frac{1}{\e},s} \right)+1}^{N_s \left(a_{j+1}^{\frac{1}{\e},s} \right)} || V_\e(s,T_{k}^{\e,s}(s)) \widehat{A}  \left( T_{k}^{\e,s}(s)  \right)f-V_\e(s,t^\e_k(s,t)) \widehat{A}  \left( t^\e_k(s,t)  \right)f ||
\end{align*} 

If $k \in \left[N_s \left(a_j^{\frac{1}{\e},s} \right)+1,N_s \left(a_{j+1}^{\frac{1}{\e},s} \right) \right]$, we have $T_{k}^{\e,s}(s) \in [a_j,a_{j+1}]$. Also, by definition of $\Omega^*$, there exists $r_1(\delta)>0$ such that for $\e < r_1$ we have $ \sup_{t \in [s,T]} \left| \e N_s \left(t^{\frac{1}{\e},s} \right)- \frac{t-s}{ M_\rho} \right| < c_1 r$ (for some $c_1$ to be chosen) so that:
\begin{align*} 
& t^\e_k(s,t) \leq s+ \e N_s \left(a_{j+1}^{\frac{1}{\e},s} \right) \frac{t-s}{\e n_\e(s,t)} \leq s+ \frac{t-s}{\e n_\e(s,t)} \left( c_1r+\frac{a_{j+1}-s}{M_\rho} \right) \\ &\leq s+M_\rho \left(  c_1r+\frac{a_{j+1}-s}{M_\rho} \right)=a_{j+1}+ M_\rho c_1 r
\end{align*} 

We also have:
\begin{align*} 
& t^\e_k(s,t) \geq s+ \e (N_s \left(a_{j}^{\frac{1}{\e},s} \right)+1) \frac{t-s}{\e n_\e(s,t)}
\end{align*} 

 Since $t \geq s+\eta_n$, we have $\frac{t-s}{\e M_\rho} \geq  \frac{\eta_n}{\e M_\rho} $ $\forall t \in [s+\eta_n,T]$. Therefore we can find a $r_2(n, \delta)$ such that for $\e<r_2$: $\frac{\lambda_t^\e}{1+\lfloor \lambda_t^\e \rfloor} \geq 1-c_2r$ $\forall t \in [s+\eta_n,T]$, where $\lambda_t^\e:=\frac{t-s}{\e M_\rho}$ and $c_2$ a constant to be chosen. So for $\e < r_1 \wedge r_2$:
 \begin{align*} 
& t^\e_k(s,t)  \geq s+M_\rho (1-c_2r) (\frac{a_{j}-s}{M_\rho}- c_1r) \geq c_2 r s+(1-c_2r)a_j - M_\rho c_1 r
\end{align*} 
 
 so that, because $||a|| < c_0 r$ and choosing $c_0=\frac{1}{4}$, $c_1=\frac{1}{8M_\rho}$, $c_2=\frac{1}{4T}$:
 \begin{align*} 
& |T_{k}^{\e,s}(s)-t^\e_k(s,t)| \leq  c_0r +2M_\rho c_1 r+c_2rT =\frac{3}{4}r<r
\end{align*}

Now we have:
 \begin{align*}  
 & || V_\e(s,T_{k}^{\e,s}(s)) \widehat{A}  \left( T_{k}^{\e,s}(s)  \right)f-V_\e(s,t^\e_k(s,t)) \widehat{A}  \left( t^\e_k(s,t)  \right)f ||\\
 \leq & || \widehat{A}  \left( T_{k}^{\e,s}(s)  \right)f- \widehat{A}  \left( t^\e_k(s,t)  \right)f || + || (V_\e(s,T_{k}^{\e,s}(s)) -V_\e(s,t^\e_k(s,t))) \widehat{A}  \left( t^\e_k(s,t)  \right)f ||
 \end{align*} 
 
 Let $\e < r_1 \wedge r_2  $. For the first term we have immediately:
  \begin{align*}  
 |T_{k}^{\e,s}(s)-t^\e_k(s,t)|<r \Rightarrow || \widehat{A}  \left( T_{k}^{\e,s}(s)  \right)f- \widehat{A}  \left( t^\e_k(s,t)  \right)f || < \delta \\
  \end{align*}

  For the other term, using \ref{19} as we did in the proof of \ref{20} we get that:
 \begin{align*}   
 &  || (V_\e(s,T_{k}^{\e,s}(s)) -V_\e(s,t^\e_k(s,t))) \widehat{A}  \left( t^\e_k(s,t)  \right)f || \\
 & \leq r C_1+ C_2 \e N \left((s+3r)^{\frac{1}{\e},s} \right)  + C_2 \e \sup_{t \in [s+2r,T-r]} \left[N \left((t+r)^{\frac{1}{\e},s} \right)-N\left((t-r)^{\frac{1}{\e},s} \right) \right] \\
 \end{align*}  
  
  By definition of $\Omega^*$:
   \begin{align*} 
  & \e N \left((s+3r)^{\frac{1}{\e},s} \right) \leq \frac{3r}{M_\rho} +c_1 r\\
    \end{align*} 
  
  and exactly as in the proof of \ref{20}:
   \begin{align*} 
  & \e \sup_{t \in [s+2r,T-r]} \left[N \left((t+r)^{\frac{1}{\e},s} \right)-N\left((t-r)^{\frac{1}{\e},s} \right) \right] < 2c_1 r + \frac{2r}{M_\rho}\\
   \end{align*} 
  
  So that all together:
   \begin{align*}   
 &  || (V_\e(s,T_{k}^{\e,s}(s)) -V_\e(s,t^\e_k(s,t))) \widehat{A}  \left( t^\e_k(s,t)  \right)f || \leq C_0 r < C_0 \delta \mbox{ for some } C_0 \in \RR^{+}\\
 \end{align*}  
  
  Finally we get that for $\e<r_1\wedge r_2 $ and on $\Omega^*$
 \begin{align*}    
  & \sup_{t \in [s+\eta_n,T]} (i)  \leq (1+C_0) \e N_s \left(T^{\frac{1}{\e},s} \right) \delta \leq C_3 \delta \\
 \end{align*}  
 
 This means that on $\Omega^*$: $\lim_{\e \to 0} \sup_{t \in [s+\eta_n,T]} (i)=0$ $\forall n$ and therefore:
  \begin{align*}
 & \limsup_{\e \to 0} \sup_{t \in [s,T]} (i) \leq \frac{\eta_n}{M_\rho}  \hspace{3mm} \forall n \\
 \end{align*}
 
 Taking the limit as $\eta_n \to 0$ we get that $\limsup_{\e \to 0} \sup_{t \in [s,T]} (i)=0$ on $\Omega^*$, which concludes the proof.
 \end{proof}

\vspace{4mm}

\begin{lem}
\label{26}
Assume \textbf{(A0)} and let $f \in \widehat{\D}$, $s \in J$ and assume that for some sequence $\e_n \to 0$, $\widetilde{M}_\bullet^{\e_n}(s)f \Rightarrow M^0_\bullet(s,f)$ for some $ D(J(s),Y)-$valued random variable $M^0_\bullet(s,f)$. Then $\widetilde{M}^0_\bullet(s)f =0$ a.e..
\end{lem}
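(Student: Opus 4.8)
The plan is to show that each real martingale $\ell\big(\widetilde M_\bullet^{\e}(s)f\big)$, $\ell\in Y^*$, collapses to zero as $\e\to0$, and then to deduce $M^0_\bullet(s,f)=0$ from uniqueness of weak limits together with separability of $Y$ and right-continuity of paths.

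\emph{Step 1: a pathwise bound on the martingale increments.} First I would show there is a constant $C_f$, depending only on $f$ and the quantities bounded in \textbf{(A0)} (in particular $\bar\tau$, $\sup_{x,u}\|A_x(u)\|_{\B(Y_1,Y)}$, $\sup_{\e,x,y}\|D_1^\e(x,y)f\|$, and $\|f_1\|:=\sup_{x,u}\|f_1(x,u)\|<\infty$, the latter by Remark \ref{113}), such that the martingale differences $\Delta M_n^{\e}(s)f:=M_n^{\e}(s)f-M_{n-1}^{\e}(s)f$ satisfy $\|\Delta M_n^{\e}(s)f\|\le C_f\,\e$ a.e.\ for all $n\ge1$. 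Writing $f_k^{\e}=f+\e f_1(x_k(s),T_k^{\e,s}(s))$ and using $V_n^{\e}(s)=V_{n-1}^{\e}(s)\,\G_{x_{n-1}(s)}(T_{n-1}^{\e,s}(s),T_n^{\e,s}(s))\,D^\e(x_{n-1}(s),x_n(s))$, one decomposes $V_n^{\e}(s)f_n^{\e}-V_{n-1}^{\e}(s)f_{n-1}^{\e}=V_{n-1}^{\e}(s)(f_n^{\e}-f_{n-1}^{\e})+V_{n-1}^{\e}(s)[\G_{x_{n-1}(s)}(T_{n-1}^{\e,s}(s),T_n^{\e,s}(s))D^\e(x_{n-1}(s),x_n(s))-I]f_n^{\e}$. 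Since $V_{n-1}^{\e}(s)$, the $\G_x$ and the $D^\e$ are $\B(Y)$-contractions, the first summand is bounded by $\|f_n^{\e}-f_{n-1}^{\e}\|\le 2\e\|f_1\|$; for the second one splits $\G D^\e-I=\G(D^\e-I)+(\G-I)$, applies these operators separately to the $\e$-independent part $f\in Y_1$ (using the integral representation \ref{7}, the bound $T_n^{\e,s}(s)-T_{n-1}^{\e,s}(s)\le\e\bar\tau$, and $\sup_{\e,x,y}\|D_1^\e(x,y)f\|<\infty$) and to the remainder $\e f_1(\cdot)$ (controlled by $2\e\|f_1\|$ via the contraction property), again obtaining a bound of order $\e$. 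Hence $\|V_n^{\e}(s)f_n^{\e}-V_{n-1}^{\e}(s)f_{n-1}^{\e}\|\le C'_f\e$, and since $\Delta M_n^{\e}(s)f$ equals this quantity minus its conditional expectation given $\F_{n-1}(s)$, $\|\Delta M_n^{\e}(s)f\|\le 2C'_f\e=:C_f\e$; in particular $|\ell(\Delta M_n^{\e}(s)f)|\le C_f\e\|\ell\|$.

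\emph{Step 2: an $L^2$ estimate.} Recall $\widetilde M_t^{\e}(s)f=M_{\theta(t)}^{\e}(s)f$ with $\theta(t):=N_s(t^{\frac{1}{\e},s})+1$ a $\F_n(s)$-stopping time and $M_0^{\e}(s)f=0$. As in the proof of \ref{24}, $\big(\ell(M_{\theta(t)\wedge n}^{\e}(s)f)\big)_n$ is a uniformly integrable martingale (using $\EE[\theta(t)^2]<\infty$ from \cite{H}), so orthogonality of martingale increments gives $\EE[\ell(\widetilde M_t^{\e}(s)f)^2]=\EE\big[\sum_{n=1}^{\theta(t)}\ell(\Delta M_n^{\e}(s)f)^2\big]\le C_f^2\|\ell\|^2\,\e^2\,\EE[\theta(t)]$. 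By the renewal estimate $\frac{1}{u}\EE[N(u)]\to\frac{1}{M_\rho}$ recalled in Section 3, $\EE[\theta(t)]=\EE[N(t^{\frac{1}{\e},s})]-\EE[N(s)]+1$ is of order $\frac{t-s}{\e M_\rho}$ as $\e\to0$, so $\EE[\ell(\widetilde M_t^{\e}(s)f)^2]=O(\e)\to0$. Thus for every fixed $t\in J(s)$ and every $\ell\in Y^*$, $\ell(\widetilde M_t^{\e}(s)f)\to0$ in $L^2(\PP)$, hence $\ell(\widetilde M_t^{\e}(s)f)\Rightarrow0$.

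\emph{Step 3: identification of the limit.} Let $T_{M^0}:=\{t\in J(s):\PP[M^0_{t^-}(s,f)\ne M^0_t(s,f)]=0\}$, which is cocountable, hence dense, in $J(s)$. By hypothesis $\widetilde M_\bullet^{\e_n}(s)f\Rightarrow M^0_\bullet(s,f)$ in $D(J(s),Y)$; for $t\in T_{M^0}$ the projection $x\mapsto x(t)$ is continuous a.e.\ with respect to $\LL(M^0_\bullet(s,f))$, so $\widetilde M_t^{\e_n}(s)f\Rightarrow M^0_t(s,f)$, and applying the continuous map $\ell$, $\ell(\widetilde M_t^{\e_n}(s)f)\Rightarrow\ell(M^0_t(s,f))$. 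Comparing with Step 2 and using uniqueness of weak limits, $\ell(M^0_t(s,f))=0$ a.e., for every $\ell\in Y^*$ and every $t\in T_{M^0}$. Since $Y$ is separable there is a countable family $\{\ell_j\}\subseteq Y^*$ separating points of $Y$, so for each $t\in T_{M^0}$, $M^0_t(s,f)=0$ a.e. Picking a countable dense $D_0\subseteq T_{M^0}$ and intersecting the corresponding $\PP$-null sets, $M^0_t(s,f)=0$ for all $t\in D_0$ a.e., and right-continuity of the paths of $M^0_\bullet(s,f)$ forces $M^0_\bullet(s,f)=0$ a.e. The main obstacle is Step 1: the pathwise, uniform-in-$n$ bound on the martingale increments, which requires isolating the $\e$-independent part of $f_k^{\e}$ so that only the contraction property is used on the $\e f_1(\cdot)$ remainder while the genuine $O(\e)$ gain comes from \ref{7} and the bounds collected in \textbf{(A0)} applied to $f$.
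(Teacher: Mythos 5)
Your proof is correct, but it reaches the conclusion by a genuinely different route than the paper. The paper keeps the argument at the level of processes: it checks that the jumps of $\ell(\widetilde{M}_\bullet^{\e_n}(s)f)$ are uniformly bounded so that, by the stability result in \cite{JS} (corollary 1.19, chapter IX), the weak limit $\ell(M^0_\bullet(s,f))$ is a local martingale, and then invokes Whitt's martingale FCLT (\cite{W}, theorem 5.1) together with the estimate $\langle \ell(\widetilde{M}_\bullet^{\e_n}(s)f)\rangle_t \leq 4C^2\|\ell\|^2\e_n^2 N_s(t^{\frac{1}{\e_n},s}) \to 0$ to conclude that the limit is the zero process. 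You instead avoid both citations by a direct second-moment computation: the same $O(\e)$ increment bound (which you rederive carefully from \ref{7}, the $\bar\tau$ assumption, Remark \ref{113} and \textbf{(A0)}, exactly the estimate the paper borrows from the proof of \ref{25}) plus orthogonality of increments of the stopped martingale and $\EE[\theta(t)]=O(1/\e)$ give $\EE[\ell(\widetilde M^{\e}_t(s)f)^2]=O(\e)$, so the one-dimensional marginals die; you then identify the marginals of $M^0$ at its a.s. continuity points via the a.s.-continuity of the evaluation maps, and finish with the same two ingredients as the paper (a countable total subset of $Y^*$ from \cite{LT}, and right-continuity of paths to upgrade a countable dense set of times to indistinguishability). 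What the paper's route buys is the identification of $\ell(M^0_\bullet(s,f))$ as the zero process at every $t$ in one stroke; what yours buys is a more elementary, self-contained argument that needs neither \cite{JS} nor \cite{W}. One small point to make explicit: when $J(s)=[s,T_\infty]$, right-continuity does not cover the right endpoint, but evaluation at the terminal time is continuous on $D([s,T_\infty],Y)$ (time changes fix the endpoints), so your Step 3 applies there directly and no gap results.
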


 \begin{proof} 
Let $\ell \in Y^*$. Then $\ell(\widetilde{M}_\bullet^{\e_n}(s)f )$ is a real-valued martingale by \ref{24} and using problem 13, section 3.11 of \cite{EK} together with the continuous mapping theorem, we get that $\ell(\widetilde{M}_\bullet^{\e_n}(s)f ) \Rightarrow \ell(M_\bullet^0(s,f))$. The fact that the process $\ell(M_\bullet^0(s,f))$ is a local martingale with respect to its natural filtration is a direct application of a result that can be found in \cite{JS} (corollary 1.19, chapter IX). We have to show that the jumps of $\ell(\widetilde{M}_\bullet^{\e_n}(s)f )$ are uniformly bounded. We have:
\begin{align*}
&M_{k+1}^{\e_n}(s)f-M_k^{\e_n}(s)f=\Delta V^{\e_n}_{k+1}(s) f_{k+1}^{\e_n}-\EE[\Delta V^{\e_n}_{k+1}(s) f_{k+1}^{\e_n} | \F_k(s)] \\
& \mbox{where } \Delta V^{\e_n}_{k+1}(s) f_{k+1}^{\e_n} :=V^{\e_n}_{k+1}(s) f_{k+1}^{\e_n}-V^{\e_n}_{k}(s) f_{k}^{\e_n}
\end{align*}

and where $f_{k+1}^{\e_n}$ has been defined in the proof of \ref{25}. Then we have:
\begin{align*}  
 \left|\Delta \ell(\widetilde{M}_t^{\e_n}(s)f ) \right|= \left| \ell \left(\Delta \widetilde{M}_t^{\e_n}(s)f \right)\right|& \leq 2 ||\ell|| \, \sup_{k \in \NN} || \Delta V^{\e_n}_{k+1}(s) f_{k+1}^{\e_n}||\\
& \leq 4 ||\ell||(||f||+||f_1||)
 \end{align*}

Now, we observe that $\ell(\widetilde{M}_\bullet^{\e_n}(s)f )$ is square-integrable since using its definition we get immediately that for some $C_1, C_2 \in \RR^+$:
\begin{align*}
 & |\ell(\widetilde{M}_t^{\e_n}(s)f )| \leq C_1  N_s \left(t^{\frac{1}{\e_n},s} \right) + C_2
 \end{align*}

and that $\EE \left[ N_s \left(t^{\frac{1}{\e_n},s} \right)^2\right] < \infty$ by \cite{H}. Now according to theorem 5.1 in \cite{W}, if we prove that $\forall t$: $\left< \ell(\widetilde{M}_\bullet^{\e_n}(s)f )\right>_t \Rightarrow 0$, then we get that $\ell(M_\bullet^0(s,f))$ is equal to the zero process, up to indistinguishability. In particular, it yields that $\forall t \in J(s)$, $\forall \ell \in Y^*$: $\ell(M_t^0(s,f))=0$ a.e.. Now, by \cite{LT} (chapter 2), we know that the dual space of every separable Banach space has a countable total subset, more precisely there exists a countable subset $S \subseteq Y^*$ such that $\forall g \in Y$:
 \begin{align*}
 & (\ell(g)=0 \hspace{3mm} \forall \ell \in S) \Rightarrow g=0
 \end{align*}
 since $\PP[\ell(M_t^0(s,f))=0, \, \forall \ell \in S]=1$, we get $M_t^0(s,f)=0$ a.e., i.e. $M_\bullet^0(s,f)$ is a modification of the zero process. Since both process have a.e. right-continuous paths, they are in fact indistinguishable (see \cite{KS}). And so $M_\bullet^0(s,f)=0$ a.e..\\
 
Now it remains to show that $\forall t$, the quadratic variation $\left< \ell(\widetilde{M}_\bullet^{\e_n}(s)f )\right>_t \Rightarrow 0$. Using the definition of $\widetilde{M}_\bullet^{\e_n}(s)f $ in \ref{24}, we get that:
\begin{align*}
& \left< \ell(\widetilde{M}_\bullet^{\e_n}(s)f )\right>_t= \sum_{k=0}^{N_s \left(t^{\frac{1}{\e_n},s} \right)} \EE \left[ \left. \ell^2(M_{k+1}^{\e_n}(s)f-M_k^{\e_n}(s)f) \right| \F_k(s) \right] 
\end{align*}

and:
\begin{align*}
&M_{k+1}^{\e_n}(s)f-M_k^{\e_n}(s)f=\Delta V^{\e_n}_{k+1}(s) f_{k+1}^{\e_n}-\EE[\Delta V^{\e_n}_{k+1}(s) f_{k+1}^{\e_n} | \F_k(s)] \\
& \mbox{where } \Delta V^{\e_n}_{k+1}(s) f_{k+1}^{\e_n} :=V^{\e_n}_{k+1}(s) f_{k+1}^{\e_n}-V^{\e_n}_{k}(s) f_{k}^{\e_n}
\end{align*}

In the proof of \ref{25} we proved that if $f \in \widehat{D}$, then:
\begin{align*}
& \left| \left|  \Delta V^{\e_n}_{k+1}(s) f_{k+1}^{\e_n} \right| \right| \leq C \e_n
\end{align*}

and therefore that:
\begin{align*}
& \left< \ell(\widetilde{M}_\bullet^{\e_n}(s)f )\right>_t \leq 4 C^2 ||\ell||^2 \e_n^2 N_s \left(t^{\frac{1}{\e_n},s} \right) \stackrel{a.e.}{\to}0
\end{align*}

because $ \e_n N_s \left(t^{\frac{1}{\e_n},s} \right) \stackrel{a.e.}{\to}\frac{t-s}{M_\rho}$.
\end{proof} 
 
To prove our next theorem, we need the following 2 assumptions, usually fulfilled in practice (see section 5):
\begin{align*} 
& \textbf{(A2)} \hspace{3mm} \forall s \in J, \,\{V_\e(s,\bullet) \widehat{A}(\bullet)\} \mbox{ satisfies the compact containment criterion in $\widehat{\D}$}  \\
& \textbf{(A3)} \hspace{3mm} \widehat{A} \mbox{ is the generator of a regular inhomogeneous $Y-$semigroup $\widehat{\G}$}\\
\end{align*} 

Let's make some comments about these assumptions. About \textbf{(A3)}, we first notice by $\ref{9}$, that if we assume $Y_1$ to be dense in $Y$ as it will be the case, then $\widehat{\G}$ is unique. \textbf{(A3)} will in general be satisfied in practice because from the expression of $\widehat{A}$ (see \ref{112}), we can write $\widehat{\G}$ explicitly as some average of the semigroups $\{\G_x\}_{x \in X}$ (see section 5).\\ 

\textbf{(A2)} yields that $\{V_\e(s,\bullet) \widehat{A}(\bullet)\}$ is $\widehat{\D}-$C-relatively compact. Indeed, in theorems \ref{15} and \ref{16}, the 2 other conditions are proved exactly the same way as in the proofs of theorems \ref{20} and \ref{21}, using the continuity of $\widehat{A}$ on $\widehat{\D}$ (recall $\bigcap_{x \in X} \D(A'_x) \subseteq \widehat{\D}$).\\

The proof of the compact containment criterion is linked to the nature of the Banach space $Y$, as we saw in \ref{17}, \ref{18}. The idea to see how it can be proved here is the following:\\
 
 For fixed $t_0 \in J(s)$ and $f \in \widehat{\D}$, we know that $\widehat{A}(t_0)f \in Y_1$, and since $\{V_\e\}$ satisfies the compact containment criterion in $Y_1$ (\textbf{(A1)}), $\exists K=K(t_0,T,\Delta,f) \subseteq Y$ compact set such that:
\begin{align*} 
& \liminf_{\e \to 0} \PP[V_\e(s,t)\widehat{A}(t_0)f  \in K \hspace{3mm} \forall t \in [s,T]] \geq 1-\Delta
\end{align*} 

What we want to prove is that, in some way, we can "put the $ \forall t_0$ inside the probability", namely $\exists K=K(T,\Delta,f) \subseteq Y$ compact set such that:
\begin{align*} 
& \liminf_{\e \to 0} \PP[V_\e(s,t)\widehat{A}(t)f  \in K \hspace{3mm} \forall t \in [s,T] ] \geq 1-\Delta
\end{align*} 

In practice it is easy to prove, because the dependence in time of the generators $A_x(t)$ doesn't cause any problems when dealing with compacity in $Y$. For example, it is straightforward that it is satisfied in the case of \ref{17}, using some simple boundedness conditions. As for the case of \ref{18}, the reason why it is satisfied is that  usually (see section 5): the set $A_\e$ doesn't depend on $f$ (it is only linked with the Markov Renewal process, i.e. the only source of randomness) and the family of functions:
\begin{align*} 
& \{V_\e(s,t)(\omega)\widehat{A}(t)f: t \in[s,T], \e \in (0,1], \omega \in A_\e \} 
\end{align*} 

converge uniformly to $0$ at infinity, are equicontinuous and uniformly bounded. The latter will be true because typically we'll have that the following family of functions converge uniformly to $0$ at infinity, are equicontinuous and uniformly bounded:
\begin{align*} 
& \{\widehat{A}(t)f: t \in[s,T] \} 
\end{align*} 

That is, the time-dependence of $\widehat{A}$ doesn't affect the 3 previous features.\\
 
 \begin{thm}
\label{27}
Assume that $\widehat{\D}$ contains a countable family that is dense in both $Y_1$ and $Y$. Under assumptions  \textbf{(A0)}, \textbf{(A1)}, \textbf{(A2)}, \textbf{(A3)}, we have that $\widehat{\G}$ is a $\B(Y)-$contraction regular inhomogeneous $Y$-semigroup. Further, for every countable family $\{f_k\} \subseteq Y$ and $s \in J$ we have the weak convergence in the Skorohod topology $D(J(s),Y^\infty)$:
 \begin{align*}
 & (V_{\e}(s,\bullet)f_k: k \in \NN) \Rightarrow (\widehat{\G}(s, \bullet)f_k: k \in \NN)
 \end{align*}
\end{thm}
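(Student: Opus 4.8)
The plan is to lean on three facts already established: the $Y_1$-C-relative compactness of $\{V_\e\}$ (Lemma \ref{21}), the martingale characterization of $V_\e(s,\bullet)f$ together with the vanishing of the limiting martingale (Lemmas \ref{24}, \ref{25}, \ref{26}), and the uniqueness built into the Cauchy problem attached to $\widehat{A}$ (Theorem \ref{10}). The subtlety, already flagged in the introduction, is that $\B(Y)$ need not be separable, so the limiting evolution $\widehat{\G}(s,\bullet)$ cannot be obtained as a weak limit point in $D(J(s),\B(Y))$; it must instead be reconstructed coordinate by coordinate on a single probability space furnished by the Skorohod representation theorem.

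\emph{Step 1 (joint tightness and representation).} Fix $s\in J$, let $\{g_j\}_{j\in\NN}\subseteq\widehat{\D}$ be the given family (dense in $Y_1$ and in $Y$), and take any sequence $\e_n\downarrow 0$. By \textbf{(A0)}, \textbf{(A1)} and Lemmas \ref{20}--\ref{21}, the families $\{V_{\e_n}(s,\bullet)g_j\}_n$ and $\{V_{\e_n}(s,\bullet)\widehat{A}(q)g_j\}_n$ (for each $q\in\QQ\cap J(s)$, since $\widehat{A}(q)g_j\in Y_1$) are C-relatively compact in $D(J(s),Y)$; the step processes $R_j^{\e_n}(s,t):=\e_n M_\rho\sum_{k=1}^{n_{\e_n}(s,t)}V_{\e_n}(s,t^{\e_n}_k(s,t))\widehat{A}(t^{\e_n}_k(s,t))g_j$ are C-relatively compact by the same estimates as in the proofs of \ref{20} and \ref{25} together with \textbf{(A2)}; and by \ref{25} the martingales $\widetilde{M}_\bullet^{\e_n}(s)g_j$ are tight. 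Passing to a subsequence (not relabelled), all of these converge jointly in the separable product Skorohod space, the $R_j^{\e_n}$ and $V_{\e_n}(s,\bullet)\widehat{A}(q)g_j$ having a.e. continuous limits, and by Lemma \ref{26} each $\widetilde{M}_\bullet^{\e_n}(s)g_j$ converging to the zero process. The Skorohod representation theorem yields, on one probability space $(\widetilde{\Omega},\widetilde{\F},\widetilde{\PP})$, copies with the same laws converging $\widetilde{\PP}$-a.e., and on a full-measure set $\widetilde{\Omega}_0$ all convergences hold uniformly on compact $t$-intervals, the limits being continuous.

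\emph{Step 2 (construction of $\widetilde{V}_0$ and limit in the martingale relation).} For $\omega\in\widetilde{\Omega}_0$ one has $\|\widetilde{V}_0(s,t)(\omega)g_j\|\leq\|g_j\|$ for all $j,t$ (closedness of balls), so by density of $\{g_j\}$ in $Y$ there is a unique $\widetilde{V}_0(s,t)(\omega)\in\B(Y)$ with $\|\widetilde{V}_0(s,t)(\omega)\|_{\B(Y)}\leq 1$, $\widetilde{V}_0(s,s)(\omega)=I$, $t\mapsto\widetilde{V}_0(s,t)(\omega)h$ continuous for every $h\in Y$, and $\widetilde{V}_{\e_n}(s,\bullet)(\omega)h\to\widetilde{V}_0(s,\bullet)(\omega)h$ uniformly on compacts for every $h\in Y$ (all of this from the contraction bound and density). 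Since $u\mapsto\widehat{A}(u)g_j$ is norm-continuous (as noted in the proof of \ref{25}), a finite-cover argument gives $\sup_{u\in[s,T]}\|\widetilde{V}_{\e_n}(s,u)\widehat{A}(u)g_j-\widetilde{V}_0(s,u)\widehat{A}(u)g_j\|\to 0$ a.e., and then a routine Riemann-sum estimate (comparing $\e_n M_\rho$ with $\frac{t-s}{n_{\e_n}(s,t)}$, using $\e_n M_\rho\,n_{\e_n}(s,t)\to t-s$, and splitting $[s,T]=[s,s+\eta]\cup[s+\eta,T]$ as in \ref{20}, \ref{25}) yields $R_j^{\e_n}(s,\bullet)\to\int_s^\bullet\widetilde{V}_0(s,u)\widehat{A}(u)g_j\,du$ uniformly on compacts a.e.. Inserting this and $\widetilde{M}_\bullet^{\e_n}(s)g_j\to 0$ into the asymptotic representation of Lemma \ref{25} gives, on a full-measure set and for all $j$ and all $t\in J(s)$,
\begin{align*}
\widetilde{V}_0(s,t)(\omega)g_j=g_j+\int_s^t\widetilde{V}_0(s,u)(\omega)\widehat{A}(u)g_j\,du.
\end{align*}

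\emph{Step 3 (identification and conclusion).} Fix such an $\omega$ and set $G(t):=\widetilde{V}_0(s,t)(\omega)$. Then $G$ is $Y$-strongly continuous, $G(s)=I$, and $\frac{d}{dt}G(t)g_j=G(t)\widehat{A}(t)g_j$ for all $j$; using the density of $\{g_j\}$ in $Y_1$ and \textbf{(A3)} (which provides $\widehat{\G}$ as a regular inhomogeneous $Y$-semigroup with generator $\widehat{A}$, together with the boundedness of $\widehat{A}(\cdot)$ on $Y_1$ needed to pass the equation to the $Y_1$-closure), $G$ is a regular solution of the Cauchy problem of Theorem \ref{10} with $G_s=I$; hence $G(t)f=\widehat{\G}(s,t)f$ for all $f\in Y_1$, and then for all $f\in Y$ by density. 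Thus $\widetilde{V}_0(s,t)=\widehat{\G}(s,t)$ is deterministic, so $\widehat{\G}(s,\bullet)$ is a $\B(Y)$-contraction (and regular by \textbf{(A3)}); since the limit did not depend on the subsequence, $(V_\e(s,\bullet)g_j:j)\Rightarrow(\widehat{\G}(s,\bullet)g_j:j)$ in $D(J(s),Y^\infty)$. For an arbitrary countable $\{f_k\}\subseteq Y$, approximate each $f_k$ in $Y$ by elements of $\{g_j\}$: since $\sup_t\|V_\e(s,t)(f_k-g)\|\leq\|f_k-g\|$ and $\sup_t\|\widehat{\G}(s,t)(f_k-g)\|\leq\|f_k-g\|$ uniformly in $\e$, a convergence-of-types argument (cf. Remark \ref{22} and \cite{B}) upgrades the convergence to $(V_\e(s,\bullet)f_k:k)\Rightarrow(\widehat{\G}(s,\bullet)f_k:k)$. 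The main obstacle is Step 2: since $\B(Y)$ is not separable one cannot work in $D(J(s),\B(Y))$ and must build $\widetilde{V}_0$ by hand from countably many coordinates, tracking null sets carefully; and the passage to the limit in $\e_n M_\rho\sum_k V_{\e_n}(s,t^{\e_n}_k)\widehat{A}(t^{\e_n}_k)g_j$ is delicate because it is a Riemann sum of a process whose oscillation is not controlled uniformly in $\e_n$ — this is exactly where the $C$-relative compactness hypothesis \textbf{(A2)} for $\{V_\e(s,\bullet)\widehat{A}(\bullet)\}$ is used.
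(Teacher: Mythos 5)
Your proposal is correct and follows essentially the same route as the paper: joint tightness plus the Skorohod representation theorem, a coordinate-by-coordinate construction of the limiting contraction $\widetilde{V}_0$ by density (precisely because $\B(Y)$ is not separable), convergence of the Riemann sums to $\int_s^\bullet \widetilde{V}_0(s,u)\widehat{A}(u)g_j\,du$, Lemma \ref{26} to kill the martingale, and the uniqueness of the Cauchy problem (Theorem \ref{10}) with \textbf{(A3)} to identify the limit as $\widehat{\G}$, then density and the contraction bound for arbitrary $f_k\in Y$. The steps you compress — linearity and the contraction estimate for differences of the representation-space copies, and the a.s. identification of the copies $\widetilde{V}_{\e_n}(s,\bullet,\widehat{A}(q)g_j)$ with the operator applied to $\widehat{A}(q)g_j$ — are obtained exactly as the paper does, by transferring sure identities of the original evolution through equality of joint laws onto full-measure sets.
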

 
 \textbf{Remark:} typically, $Y=C_0^{n_1}(\RR^d)$, $Y_1=C_0^{n_2}(\RR^d)$, and the countable family is chosen in $C_0^{n_3}(\RR^d)$, for $n_1 \leq n_2 \leq n_3$.\\
 
 \begin{proof}
 The proof can be splitted in the following steps:
 \begin{itemize}
 \item Take $\{g_k\}_{k \in \NN^*} \subseteq \widehat{\D}$ a countable family that is both dense in $Y_1$ and $Y$ and because marginal tightness implies countable tightness, get the tightness of $(V_{\e_n}(s, \bullet)\widehat{A}(\bullet)g_k, V_{\e_n}(s, \bullet) g_k : k \geq 1)$ using \textbf{(A1)}, \textbf{(A2)}. By tightness, take one weakly converging sequence $\e_n$ and the goal is to show that the limit is unique in distribution.
 \item Carry the problem to a new probability space $(\Omega', \F', \PP')$ where convergence holds a.e., by the Skorohod representation theorem.
 \item On a subset $\Omega'_* \subseteq \Omega'$ such that $\PP'(\Omega'_*)=1$ and using density (and completeness), construct a stochastic process $V'_{0}(s, \bullet)$ with sample paths in $D(J(s),\B(Y))$ such that on $\Omega'_*$ we have the convergence in the Skorohod topology (where the subscript $'$ denotes random variables on $\Omega'$): 
 \begin{align*}
 & V'_{\e_n}(s, \bullet,g_k,\widehat{A}) \to V'_{0}(s, \bullet)\widehat{A}(\bullet)g_k, \hspace{3mm} \forall k \\
 & V'_{\e_n}(s, \bullet)g_k \to V'_{0}(s, \bullet)g_k
  \end{align*}
  where $\forall n$:
  \begin{align*}
 (V_{\e_n}(s, \bullet)\widehat{A}(\bullet)g_k, V_{\e_n}(s, \bullet) g_k : k \geq 1) \stackrel{d}{=} (V'_{\e_n}(s, \bullet,g_k,\widehat{A}),V'_{\e_n}(s, \bullet, g_k)  : k \geq 1) 
   \end{align*} 
   \item Show the convergence of the Riemann sum of \ref{25} to an integral and use \ref{26} to show the convergence of the martingale of \ref{25} to the zero process on $\Omega'_*$, so that $V'_{0}$ will satisfy the following integral equation on $\Omega'_*$:
 \begin{align*}
 & V'_{0}(s,\bullet)f=f+\int_s^\bullet V'_0(s,u) \widehat{A}(u)fdu, \hspace{3mm} \forall f \in Y_1
 \end{align*}
 \item By \textbf{(A3)} and unicity of the Cauchy problem \ref{10}, show that $\forall f \in Y$, $\forall s \in J$, $V'_{0}(s,\bullet)f$ must be the constant random variable $\widehat{\G}(s,\bullet)f$ in $C(J(s),Y)$.\\
 \end{itemize}

 Let $\{g_k\}_{k \in \NN^*} \subseteq \widehat{\D}$ a countable family that is both dense in $Y_1$ and $Y$. As we mentioned it, \textbf{(A2)} is enough to prove that $\{V_\e(s,\bullet) \widehat{A}(\bullet)\}$ is $\widehat{\D}-$C-relatively compact. Indeed, in theorems \ref{15} and \ref{16}, the 2 other conditions are proved exactly the same way as in the proofs of theorems \ref{20} and \ref{21}, using the continuity of $\widehat{A}$ on $\widehat{\D}$ (recall $\bigcap_{x \in X} \D(A'_x) \subseteq \widehat{\D}$). By the latter and \ref{21}, the family $\{V_\e(s, \bullet)\widehat{A}(\bullet)g_k, V_\e(s, \bullet) g_k : k \geq 1\}$ is C-relatively compact in $D(J(s),Y)^\infty$, and in fact in $D(J(s),Y^\infty)$ since the limit points are continuous. Take a converging sequence $\e_n$:
 \begin{align*}
 & (V_{\e_n}(s, \bullet)\widehat{A}(\bullet)g_k, V_{\e_n}(s, \bullet) g_k : k \geq 1) \Rightarrow (\alpha(s,\bullet,g_k), v_{0}(s, \bullet,g_k) : k \geq 1)
 \end{align*}
 
 By Skorohod representation theorem, we can consider this convergence to be almost sure, i.e. there exists a probability space $(\Omega', \F', \PP')$ and random variables with the same distributions as the previous ones (denoted by the subscript $'$), such that:
  \begin{align*}
 & (V'_{\e_n}(s, \bullet,g_k,\widehat{A}),V'_{\e_n}(s, \bullet, g_k)  : k \geq 1) \stackrel{a.e.}{\to} (\alpha'(s,\bullet,g_k),v'_{0}(s, \bullet,g_k) : k \geq 1)
 \end{align*}
 
 Let $g \in Y$. By density, there exists a sequence  $(g^0_k)_{k \in \NN^*} \subseteq \{g_k\}_{k \in \NN^*}$ so that $g^0_k \to g$. Let:
 \begin{align*}
 & B(r):= \{(x,y) \in D(J(s),Y) \times D(J(s),Y): d(x,y) \leq r \}=d^{-1} ([0,r])
 \end{align*}
 
 As the preimage of a closed set under a continuous function, $B(r)$ is closed in $D(J(s),Y) \times D(J(s),Y)$. Note that  because $D(J(s),Y) $ is separable, the Borel sigma-algebras $Bor(D(J(s),Y) \times D(J(s),Y))$ and $Bor(D(J(s),Y)) \otimes Bor(D(J(s),Y))$ agree, so we shall not worry about that.\\
 
 Since for every $k_1$, $k_2$, $n$ the pairs $\{V'_{\e_n}(s, \bullet, g^0_{k_1}), V'_{\e_n}(s, \bullet, g^0_{k_2}) \}$ and $\{V_{\e_n}(s, \bullet) g^0_{k_1}, V_{\e_n}(s, \bullet) g^0_{k_2} \}$ have the same distributions, we get that:
  \begin{align*}
  & \PP' \left[ d(V'_{\e_n}(s, \bullet, g^0_{k_1}), V'_{\e_n}(s, \bullet, g^0_{k_2})) \leq ||g^0_{k_1}-g^0_{k_2}||  \right] \\
  =& \PP' \left[ (V'_{\e_n}(s, \bullet, g^0_{k_1}), V'_{\e_n}(s, \bullet, g^0_{k_2})) \in B(||g^0_{k_1}-g^0_{k_2}||)  \right] \\
  =& \PP \left[ (V_{\e_n}(s, \bullet) g^0_{k_1}, V_{\e_n}(s, \bullet) g^0_{k_2}) \in B(||g^0_{k_1}-g^0_{k_2}||)  \right] \\
  =&  \PP \left[ d(V_{\e_n}(s, \bullet) g^0_{k_1}, V_{\e_n}(s, \bullet) g^0_{k_2}) \leq ||g^0_{k_1}-g^0_{k_2}|| \right] =1
  \end{align*}
 
 Let the subset $ \Omega'^* \subseteq  \Omega'$:
 \begin{align*}
& \Omega'^*:= \bigcap_{k_1,k_2,n} \left \{ d(V'_{\e_n}(s, \bullet, g_{k_1}), V'_{\e_n}(s, \bullet ,g_{k_2})) \leq ||g_{k_1}-g_{k_2}|| \right \}\\& \bigcap \left \{ \lim_{n \to \infty} (V'_{\e_n}(s, \bullet,g_k,\widehat{A}),V'_{\e_n}(s, \bullet, g_k)  : k \geq 1) = (\alpha'(s,\bullet,g_k),v'_{0}(s, \bullet,g_k) : k \geq 1) \right\}
\end{align*}
 
so that $\PP'( \Omega'^*)=1$. On $\Omega'^*$, the sequence $(V'_{\e_n}(s, \bullet, g^0_{k}))_{k \in \NN}$ is Cauchy in $D(J(s),Y)$ which is complete, therefore it converges to some $V'_{\e_n}(s, \bullet, g)$ as $k \to \infty$. To see that $V'_{\e_n}(s, \bullet, g)$ has the same distribution as $V_{\e_n}(s, \bullet) g$, we observe that $V_{\e_n}(s, \bullet) g^0_{k} \stackrel{a.e.}{\to}V_{\e_n}(s, \bullet) g$ (by contraction property of $V_{\e_n}$) and we just invoke the unicity of the limit in distribution, together with the fact that $\forall k$: $V'_{\e_n}(s, \bullet, g^0_{k})$ and $V_{\e_n}(s, \bullet) g^0_{k}$ have the same distributions. Note that all the $\{V'_{\e_n}(s, \bullet, g) : g \in Y, n \in \NN \}$ are defined on the common subset $\Omega'^*$.\\

We have on $\Omega'^*$ that $\forall n, k_1, k_2$: $d(V'_{\e_n}(s, \bullet, g^0_{k_1}), V'_{\e_n}(s, \bullet, g^0_{k_2})) \leq ||g^0_{k_1}-g^0_{k_2}||$. Since $d$ is continuous, we may take the limit as $n \to \infty$ and obtain that: 
 \begin{align*}
&d(v'_{0}(s, \bullet, g^0_{k_1}), v'_{0}(s, \bullet, g^0_{k_2})) \leq ||g^0_{k_1}-g^0_{k_2}||, 
 \end{align*}
which by completeness shows the convergence of the sequence $(v'_{0}(s, \bullet, g^0_{k}))_{k \in \NN}$ to some $v'_{0}(s, \bullet, g)$ (which belongs to $C(J(s),Y)$ as a limit in the Skorohod metric of elements of $C(J(s),Y)$). Now to see that the latter is the pointwise limit of $V'_{\e_n}(s, \bullet, g)$ on $\Omega'^*$, we observe that:

\begin{align*}
 d(v'_{0}(s, \bullet,g),V'_{\e_n}(s, \bullet,g))  &\leq  d(v'_{0}(s, \bullet,g),v'_{0}(s, \bullet,g^0_k)) + d(v'_{0}(s, \bullet,g^0_k),V'_{\e_n}(s, \bullet,g^0_k)) \\&+ d(V'_{\e_n}(s, \bullet,g^0_k),V'_{\e_n}(s, \bullet,g))
\end{align*}

Now, by continuity of $d$ , $|| \cdot ||$ and taking the limit as $p \to \infty$, we have on $\Omega'^*$:
\begin{align*}
& d(V'_{\e_n}(s, \bullet,g^0_k),V'_{\e_n}(s, \bullet,g^0_p)) \leq ||g^0_k-g^0_p|| \Rightarrow d(V'_{\e_n}(s, \bullet,g^0_k),V'_{\e_n}(s, \bullet,g)) \leq ||g^0_k-g||
\end{align*}

Therefore first choose $k$ such that the 1st and 3rd terms are small, then choose $n$ such that the 2nd term is small. \\

Now that $v'_{0}(s, \bullet, g)$ is well defined on $\Omega'^*$ for every $g \in Y$, we want to show that on some subset $\Omega'_0 \subseteq \Omega'^*$ such that $\PP'(\Omega'_0)=1$, we have $v'_{0}(s, \bullet, h+\lambda g)=v'_{0}(s, \bullet, h)+\lambda v'_{0}(s, \bullet,  g)$ and $||v'_{0}(s, t, g)|| \leq ||g||$, $\forall h,g \in Y$, $\forall \lambda \in \RR$, $\forall t \in J(s)$, namely that for every $\oo \in \Omega'_0$ and $t \in J(s)$, $v'_{0}(s, t, \bullet)(\oo)$ is a $\B(Y)-$contraction. After having proved the latter, we will adopt the notation $V'_0(s,t)(\oo):=v'_{0}(s, t, \bullet)(\oo)$ to emphasize this fact.\\

Let $h,g \in Y$, $\lambda \in \RR$. There exists sequences $(h^0_k)_{k \in \NN^*} \, , (g^0_k)_{k \in \NN^*} \subseteq \{g_k\}_{k \in \NN^*}$ so that $g^0_k \to g$, $h^0_k \to h$ and $(\lambda_k)_{k \in \NN^*}  \subseteq \QQ$ so that $\lambda_k \to \lambda$. \\

First observe the following equality in distribution:
\begin{align*}
& (V_{\e_n}(s, \bullet) (h^0_{k}+\lambda_k g^0_{k}), V_{\e_n}(s, \bullet) h^0_{k}, \lambda_k V_{\e_n}(s, \bullet)  g^0_{k}) \stackrel{d}{=} (V'_{\e_n}(s, \bullet,h^0_{k}+\lambda_k g^0_{k}), V'_{\e_n}(s, \bullet, h^0_{k}), \lambda_k V'_{\e_n}(s, \bullet, g^0_{k}))
\end{align*}

This is because $\forall k$, there exists a sequence $(\beta^k_p)_{p \in \NN^*} \subseteq \{g_p\}_{p \in \NN^*}$ such that $\beta^k_p \to  h^0_{k}+\lambda_k g^0_{k}$ and by contraction property of $V_{\e_n}$ and construction of $V'_{\e_n}$ :
\begin{align*}
& (V_{\e_n}(s, \bullet) \beta^k_p, V_{\e_n}(s, \bullet) h^0_{k}, \lambda_k V_{\e_n}(s, \bullet)  g^0_{k}) \stackrel{a.e. \,, p \to \infty}{\to} (V_{\e_n}(s, \bullet) (h^0_{k}+\lambda_k g^0_{k}), V_{\e_n}(s, \bullet) h^0_{k}, \lambda_k V_{\e_n}(s, \bullet)  g^0_{k}) \\
& (V'_{\e_n}(s, \bullet,\beta^k_p), V'_{\e_n}(s, \bullet, h^0_{k}), \lambda_k V'_{\e_n}(s, \bullet,g^0_{k})) \stackrel{a.e.  \,, p \to \infty}{\to} (V'_{\e_n}(s, \bullet,h^0_{k}+\lambda_k g^0_{k}), V'_{\e_n}(s, \bullet, h^0_{k}), \lambda_k V'_{\e_n}(s, \bullet, g^0_{k}))
\end{align*}

and that:
\begin{align*}
& (V_{\e_n}(s, \bullet) \beta^k_p, V_{\e_n}(s, \bullet) h^0_{k}, \lambda_k V_{\e_n}(s, \bullet)  g^0_{k}) \stackrel{d}{=} (V'_{\e_n}(s, \bullet,\beta^k_p), V'_{\e_n}(s, \bullet, h^0_{k}), \lambda_k V'_{\e_n}(s, \bullet,g^0_{k}))
\end{align*}

and we conclude by unicity of the limit in distribution. In particular we get that $V'_{\e_n}(s, \bullet,h^0_{k}+\lambda_k g^0_{k})-V'_{\e_n}(s, \bullet, h^0_{k})-\lambda_k V'_{\e_n}(s, \bullet, g^0_{k}) = 0$ a.e.. Let the subset $\Omega'^{**} \subseteq \Omega'^*$:
\begin{align*}
& \Omega'^{**} = \Omega'^* \cap \bigcap_{n,k_1,k_2 \in \NN, \, r \in \QQ} \{V'_{\e_n}(s, \bullet, g_{k_1}+r g_{k_2})-V'_{\e_n}(s, \bullet, g_{k_1})-r V'_{\e_n}(s, \bullet, g_{k_2})=0\}
\end{align*}

so that $\PP'(\Omega'^{**})=1$. Because the limit points $v'_0$ are continuous, we have on $\Omega'^{**}$ that:
\begin{align*}
& V'_{\e_n}(s, \bullet,h^0_{k}+\lambda_k g^0_{k})-V'_{\e_n}(s, \bullet, h^0_{k})-\lambda_k V'_{\e_n}(s, \bullet, g^0_{k}) \stackrel{n \to \infty}{\to}  v'_{0}(s, \bullet ,h^0_{k}+\lambda_k g^0_{k})-v'_{0}(s, \bullet,h^0_{k})-\lambda_k v'_{0}(s, \bullet,g^0_{k})
\end{align*}
and therefore that $v'_{0}(s, \bullet ,h^0_{k}+\lambda_k g^0_{k})-v'_{0}(s, \bullet,h^0_{k})-\lambda_k v'_{0}(s, \bullet,g^0_{k})=0$. Taking the limit as $k \to \infty$ (and using again the fact that the $v'_0$ are continuous) yields $v'_{0}(s, \bullet, h+\lambda g)=v'_{0}(s, \bullet, h)+\lambda v'_{0}(s, \bullet,  g)$.\\

Now to show that $||v'_{0}(s, t, g)|| \leq ||g||$ $\forall g \in Y, \forall t \in J(s)$ on some subset $\Omega'_0 \subseteq \Omega'^{**}$ such that $\PP'(\Omega'_0)=1$, we observe that because $V'_{\e_n}(s, \bullet,g^0_k) \to v'_{0}(s, \bullet, g^0_k)$, and $v'_{0}(s, \bullet ,g^0_k) \to v'_{0}(s, \bullet, g)$: by \cite{EK} (chapter 3, proposition 5.2) and using continuity of $v'_0$ we get that $\forall t$: $v'_{0}(s, t ,g^0_k) \to v'_{0}(s, t, g)$, $V'_{\e_n}(s, t ,g^0_k) \to v'_{0}(s, t, g^0_k)$. Since $V'_{\e_n}(s, \bullet ,g_p)$ and $V_{\e_n}(s, \bullet)g_p$ have the same distribution, denoting $S_p:=\{x \in D(J(s),Y): ||x(t)|| \leq ||g_p|| \, \forall t \in J(s)\}$ we get that $\PP'[V'_{\e_n}(s, \bullet ,g_p) \in S_p]=1$, say on $\Omega'_{n,p}$. Let $\Omega'_0:= \bigcap_{n,p} \Omega'_{n,p} \cap \Omega'^{**}$. Let $t \in J(s)$. We have on $\Omega'_0$ that: $||V'_{\e_n}(s, t ,g^0_k)|| \leq ||g^0_k||$. Taking the limit as $n \to \infty$, we get $|| v'_{0}(s, t, g^0_k)|| \leq ||g^0_k||$, and then as $k \to \infty$ we get $|| v'_{0}(s, t, g)|| \leq ||g||$.\\

 Now let's take some $g_p$ and show that on some $\Omega'_{00} \subseteq \Omega'_0$ such that $\PP(\Omega'_{00})=1$:
 \begin{align*}
 & \e_n \sum_{k=1}^{n_{\e_n}(s,\bullet)} V'_{\e_n}(s, t^{\e_n}_k(s,\bullet),g_p,\widehat{A}) \to \frac{1}{M_\rho} \int_s^\bullet V'_0(s,u) \widehat{A}(u)g_p du \\
 \end{align*}
 
 First, let's prove that a.e. (say on some $\Omega'_{00} \subseteq \Omega'_{0}$): $\forall p$, $V'_0(s,\bullet) \widehat{A}(\bullet)g_p=\alpha'(s,\bullet,g_p)$. By what we did before we know that $\forall h \in Y$: 
  \begin{align*}
 & (V'_{\e_n}(s, \bullet,g_p,\widehat{A}),V'_{\e_n}(s, \bullet, h)) \stackrel{d}{=} (V_{\e_n}(s, \bullet)\widehat{A}(\bullet)g_p, V_{\e_n}(s, \bullet) h) \\
  \end{align*}
  
   In particular taking $t \in J(s)$ and $h=\widehat{A}(t)g_p$:
   \begin{align*}
 & (V'_{\e_n}(s, t,g_p,\widehat{A}),V'_{\e_n}(s, t, \widehat{A}(t)g_p)) \stackrel{d}{=} (V_{\e_n}(s, t)\widehat{A}(t)g_p, V_{\e_n}(s, t) \widehat{A}(t)g_p) \\
  \end{align*} 
  
  And therefore:
  \begin{align*} 
 &  \PP'[V'_{\e_n}(s, t,g_p,\widehat{A})-V'_{\e_n}(s, t, \widehat{A}(t)g_p) =0]=1 \mbox{, say on } \Omega'_{n,t,p} \\
  \end{align*} 
 
 and on the other hand by continuity of the limits, we have on $ \Omega'_t:=\bigcap_{n,p} \Omega'_{n,t,p} \cap \Omega'_{0}$: 
\begin{align*} 
 & 0=V'_{\e_n}(s, t,g_p,\widehat{A})-V'_{\e_n}(s, t, \widehat{A}(t)g_p)  \to \al'(s, t,g_p)-v'_{0}(s, t, \widehat{A}(t)g_p) \\
  \end{align*}
  
 Let $\Omega'_{00}:=  \bigcap_{t \in \QQ} \Omega'_t $. Now let $t \in J(s)$ and a sequence of rationals $t_k \to t$. On $\Omega'_{00}$ we have that:
 \begin{align*} 
& 0=\al'(s, t_k,g_p)-v'_{0}(s, t_k, \widehat{A}(t_k)g_p)=\al'(s, t_k,g_p)-V'_{0}(s, t_k) \widehat{A}(t_k)g_p\\
  \end{align*}
  
  We have $\al'(s, t_k,g_p) \to \al'(s, t,g_p)$ by continuity of $\al'$. And:
  \begin{align*} 
  & ||V'_{0}(s, t_k) \widehat{A}(t_k)g_p-V'_{0}(s, t) \widehat{A}(t)g_p|| \\&\leq ||V'_{0}(s, t_k) (\widehat{A}(t_k)g_p-\widehat{A}(t)g_p)|| + ||V'_{0}(s, t_k) \widehat{A}(t)g_p-V'_{0}(s, t) \widehat{A}(t)g_p||\\
  & \leq  ||\widehat{A}(t_k)g_p-\widehat{A}(t)g_p|| + ||V'_{0}(s, t_k) \widehat{A}(t)g_p-V'_{0}(s, t) \widehat{A}(t)g_p|| \\
  \end{align*} 
  
  where the last inequality used linearity and contraction property of $V'_0$. The first term goes to $0$ by continuity of $\widehat{A}$ on $\widehat{D}$, and the second by continuity of $V'_{0}(s, \bullet) \widehat{A}(t)g_p$.\\
   
 Now back to the convergence of the Riemann sum, we have on $\Omega'_{00}$:
  \begin{align*}
 &d \left(\e_n \sum_{k=1}^{n_{\e_n}(s,\bullet)} V'_{\e_n}(s, t^{\e_n}_k(s,\bullet),g_p,\widehat{A}) , \frac{1}{M_\rho} \int_s^\bullet \al'(s, u,g_p)du \right)\\
 \leq &  \underbrace{d \left(\e_n \sum_{k=1}^{n_{\e_n}(s,\bullet)} V'_{\e_n}(s, t^{\e_n}_k(s,\bullet),g_p,\widehat{A}) , \e_n \sum_{k=1}^{n_{\e_n}(s,\bullet)} \al'(s, t^{\e_n}_k(s,\bullet),g_p) \right)}_{(i)} + \\
 & \underbrace{d \left( \e_n \sum_{k=1}^{n_{\e_n}(s,\bullet)} \al'(s, t^{\e_n}_k(s,\bullet),g_p) , \frac{1}{M_\rho} \int_s^\bullet \al'(s, u,g_p) du  \right)}_{(ii)}
 \end{align*}
 
 Let $T>0$. Because $V'_{\e_n}(s, \bullet ,g_p,\widehat{A}) \to  \al'(s, \bullet,g_p)$ and $\al'$ is continuous, the convergence in the Skorohod topology is equivalent to convergence in the uniform topology. In particular :
 \begin{align*}
 & \sup_{t \in [s,T]} ||V'_{\e_n}(s, t ,g_p,\widehat{A}) -  \al'(s, t,g_p) || \to 0
 \end{align*}
 
 Therefore we get on $\Omega'_{00}$:
  \begin{align*}
 & \sup_{t \in [s,T]} \left| \left| \e_n \sum_{k=1}^{n_{\e_n}(s,t)} V'_{\e_n}(s, t^{\e_n}_k(s,t),g_p,\widehat{A}) - \e_n \sum_{k=1}^{n_{\e_n}(s,t)} \al'(s, t^{\e_n}_k(s,t),g_p) \right| \right| \\
 & \leq \e_n n_{\e_n}(s,T) \sup_{t \in [s,T]} ||V'_{\e_n}(s, t ,g_p,\widehat{A}) -  \al'(s, t,g_p) || \to 0
 \end{align*}
 
 since $\e_n n_{\e_n}(s,T) \to \frac{T-s}{M_\rho}$. By remark \ref{22} we get $(i) \to 0$.\\
 
 For (ii), we proceed similarly as in \ref{25}. We let a sequence of positive numbers $\eta_m \to 0$. We have:
 \begin{align*} 
 &\sup_{t \in [s,T]} \left| \left| \e_n \sum_{k=1}^{n_{\e_n}(s,t)} \al'(s, t^{\e_n}_k(s,t),g_p) - \frac{1}{M_\rho} \int_s^t \al'(s, u,g_p) du \right| \right|\\
 & \leq \sup_{t \in [s,s+\eta_m]} \left| \left| \e_n \sum_{k=1}^{n_{\e_n}(s,t)} \al'(s, t^{\e_n}_k(s,t),g_p) - \frac{1}{M_\rho} \int_s^t \al'(s, u,g_p) du \right| \right| \\ &+ \sup_{t \in [s+\eta_m,T]}\left| \left| \e_n \sum_{k=1}^{n_{\e_n}(s,t)} \al'(s, t^{\e_n}_k(s,t),g_p) - \frac{1}{M_\rho} \int_s^t \al'(s, u,g_p) du \right| \right|
  \end{align*}
 
 Because on $\Omega'_{00}$, $\alpha'(s,\bullet,g_p)=V'_0(s,\bullet) \widehat{A}(\bullet)g_p$ and by contraction property of $V'_0$ we get that: 
 \begin{align*}
  & \sup_{t \in [s,s+\eta_m]}\left| \left| \e_n \sum_{k=1}^{n_{\e_n}(s,t)} \al'(s, t^{\e_n}_k(s,t),g_p) - \frac{1}{M_\rho} \int_s^t \al'(s, u,g_p) du \right| \right| \leq C_1  \e_n n_{\e_n}(s,s+\eta_m)+C_2 \eta_m \\
  & \Rightarrow \limsup_{n \to \infty} \sup_{t \in [s,s+\eta_m]}\left| \left| \e_n \sum_{k=1}^{n_{\e_n}(s,t)} \al'(s, t^{\e_n}_k(s,t),g_p) - \frac{1}{M_\rho} \int_s^t \al'(s, u,g_p) du \right| \right| \leq C_3 \eta_m
 \end{align*}
 
 For the other term, we proceed exactly as in the proof of  \ref{25}: because $t \geq \eta_m$, we can refine as we wish the partition $\{t^{\e_n}_k(s,t)\}$ uniformly in $t \in [s+\eta_m,T]$, namely:  $\forall \delta>0$, $\exists r(\delta,m)>0$ such that if $\e_n<r$: 
  \begin{align*} 
 &t^{\e_n}_{k+1}(s,t)- t^{\e_n}_k(s,t) < r \hspace{3mm} \forall t \in [s+\eta_m,T], \forall k \in [|1,n_{\e_n}(s,t)|]
  \end{align*}
 
 and get the convergence of the riemann sum to the Riemann integral uniformly on $[s+\eta_m,T]$, i.e. 
  \begin{align*}
  & \sup_{t \in [s+\eta_m,T]} \left| \left| \e_n \sum_{k=1}^{n_{\e_n}(s,t)} \al'(s, t^{\e_n}_k(s,t),g_p) - \frac{1}{M_\rho} \int_s^t \al'(s, u,g_p) du \right| \right|  \stackrel{n \to \infty}{\to} 0
 \end{align*}
 
 This yields: 
 \begin{align*} 
 \limsup_{n \to \infty} \sup_{t \in [s,T]} \left| \left| \e_n \sum_{k=1}^{n_{\e_n}(s,t)} \al'(s, t^{\e_n}_k(s,t),g_p) - \frac{1}{M_\rho} \int_s^t \al'(s, u,g_p) du \right| \right|  \leq C_3 \eta_m \,\forall m
  \end{align*}
 
 and therefore taking the limit as $m \to \infty$: 
 \begin{align*}  
 \lim_{n \to \infty} \sup_{t \in [s,T]} \left| \left| \e_n \sum_{k=1}^{n_{\e_n}(s,t)} \al'(s, t^{\e_n}_k(s,t),g_p) - \frac{1}{M_\rho} \int_s^t \al'(s, u,g_p) du \right| \right|=0 
 \end{align*} 
 which is what we want.\\
 
 Finally we get on $\Omega'_{00}$, by continuity of the limit points:
 \begin{align*}
 & V'_{\e_n}(s,\bullet)g_p-g_p- M_\rho \e_n \sum_{k=1}^{n_{\e_n}(s,\bullet)} V'_{\e_n}(s, t^{\e_n}_k(s,\bullet),g_p,\widehat{A}) \to V'_{0}(s,\bullet)g_p-g_p- \int_s^\bullet V'_0(s,u) \widehat{A}(u)g_pdu
 \end{align*}
 
 Since we have:
\begin{align*} 
 & (V'_{\e_n}(s,\bullet)g_p,V'_{\e_n}(s, \bullet,g_p,\widehat{A})) \stackrel{d}{=} (V_{\e_n}(s,\bullet)g_p,V_{\e_n}(s, \bullet)\widehat{A}(\bullet) g_p)
 \end{align*}
 
 Then we get, for some $M^0_\bullet(s,g_p)$:
 \begin{align*} 
 & V_{\e_n}(s,\bullet)g_p-g_p- M_\rho \e_n \sum_{k=1}^{n_{\e_n}(s,\bullet)} V_{\e_n}(s,t^{\e_n}_k(s,\bullet)) \widehat{A}  \left( t^{\e_n}_k(s,\bullet) \right) g_p \Rightarrow M^0_\bullet(s,g_p)
 \end{align*} 
 
 and therefore by \ref{25} that $\widetilde{M}_\bullet^{\e_n}(s)g_p  \Rightarrow M^0_\bullet(s,g_p)$, which yields by \ref{26} that $M^0_\bullet(s,g_p)=0$ a.e., and therefore: 
 \begin{align*}
 & V'_{0}(s,\bullet)g_p-g_p-\int_s^\bullet V'_0(s,u) \widehat{A}(u)g_pdu = 0 \mbox{ a.e., say on some } \Omega'_*:=\bigcap_p \Omega'_*(p) \subseteq \Omega'_{00}
 \end{align*}
  
 Let $f \in Y_1$ and $t \in J(s)$. Since $\{g_p\}$ is dense in $Y_1$, there exists a sequence $f_p \subseteq \{g_p\} \stackrel{Y_1}{\to} f$. We have on $\Omega'_*$:
 \begin{align*}
 & \left| \left| V'_{0}(s,t)f-f-\int_s^t V'_0(s,u) \widehat{A}(u)f  \right| \right| \\&\leq  ||V'_{0}(s,t)f-V'_{0}(s,t)f_p||+\left| \left| V'_{0}(s,t)f_p-f-\int_s^t V'_0(s,u) \widehat{A}(u)f \right| \right|\\
 & \leq 2||f_p-f|| +\int_s^t ||V'_0(s,u) \widehat{A}(u)f_p-V'_0(s,u) \widehat{A}(u)f||du\\
 & \leq 2 ||f_p-f|| +||f_p-f||_{Y_1} \int_s^t || \widehat{A}(u)||_{\B(Y_1,Y)} du \to 0\\
 \end{align*}
 
 And therefore on $\Omega'_*$ we have $\forall f \in Y_1$:
 \begin{align*}
 & V'_{0}(s,\bullet)f=f+\int_s^\bullet V'_0(s,u) \widehat{A}(u)fdu
 \end{align*}
 
 By continuity of $\widehat{A}$ on $Y_1$ (\textbf{(A0)}), and boundedness + continuity of $V'_0$: $t \to V'_0(s,t) \widehat{A}(t)f \in C(J(s),Y)$ on $\Omega'_*$ and therefore we have on $\Omega'_*$:
  \begin{align*}
 & \frac{\partial}{\partial t}V'_{0}(s,t)f=V'_0(s,t) \widehat{A}(t)f \hspace{3mm} \forall t \in J(s), \,\forall f \in Y_1 \\
 & V'_{0}(s,s)=I
 \end{align*}
 
By \textbf{(A3)} and \ref{10}, $V'_{0}(s,\bullet)(\oo')f=\widehat{\G}(s,\bullet)f$, $\forall \oo' \in \Omega'_*$, $\forall f \in Y_1$. By contraction property and density of $Y_1$ in $Y$, the previous equality is true in $Y$.
 
 \end{proof} 


\section{Applications}
In this section we give an application to inhomogeneous L\'{e}vy random evolutions. We first introduce inhomogeneous L\'{e}vy semigroups.\\

Let $J=[0,T_\infty]$, $Y:=C_0(\RR^d)$, $Y_1:=C_0^2(\RR^d)$. Let $(\Omega, \F, \PP)$ a probability space (possibly different than the probability space on which is defined the Semi-Markov process) and $(L_t)_{t \in J}$ a process with independent increments and absolutely continuous characteristics (PIIAC), or inhomogeneous L\'{e}vy processes in \cite{Kl}. It is a specific case of additive processes that are semimartingales, which is not the case of all additive processes (see \cite{Kl}) . For $A \in Bor(\RR^d)$ and $z \in \RR^d$ we let $p_{s,t}(z,A)=\PP(L_t-L_s \in A-z)$, $\mu_{s,t}$ the law of $L_t-L_s$ and the inhomogeneous L\'{e}vy semigroup: 
\begin{align*}
& \G(s,t)f(z):=\EE[f(L_t-L_s+z)]=\int_{\RR^d}p_{s,t}(z,dy)f(y)= \int_{\RR^d} \mu_{s,t}(dy) f(z+y)
\end{align*}

$\G$ is a regular $\B(Y)-$contraction semigroup and $\forall n \in \NN$ we have $\G(s,t) \in \B(C_0^n(\RR^d))$ and $||\G(s,t)||_{\B(C_0^n(\RR^d))} \leq 1$ (see proof in Appendix \ref{A2}).\\

The L\'{e}vy-Khintchine representation of such a process $(L_t)_{t \in J}$ (see \cite{CT}, 14.1) ensures that there exists unique $(B_t)_{t \in J} \subseteq \RR^d$, $(C_t)_{t \in J}$ a family of $d \times d$ symmetric nonnegative-definite matrices and $(\bar{\nu}_t)_{t \in J}$ a family of measures on $\RR^d$ such that:
\begin{align*}
& \EE[e^{i \left<u,L_t\right>}]=e^{\psi(u,t)} \mbox {, with }:\\
&\psi(u,t):=i \left<u,B_t\right>-\frac{1}{2} \left<u,C_t u\right>+\int_{\RR^d} (e^{i \left<u,y\right>}-1-i\left<u,y\right>1_{|y| \leq 1})\bar{\nu}_t(dy)
\end{align*}

$(B_t,C_t,\bar{\nu}_t)_{t \in J}$ is called the spot characteristics of $L$. They satisfy the following regularity conditions:
\begin{itemize}
\item $\forall t \in J$, $\bar{\nu}_t\{0\}=0$ and $\int_{\RR^d} (|y|^2 \wedge 1)\bar{\nu}_t(dy)< \infty$
\item $(B_0,C_0,\bar{\nu}_0)=(0,0,0)$ and $\forall (s,t) \in \Delta_J$: $C_t-C_s$ is symmetric nonnegative-definite and $\bar{\nu}_s(A) \leq \bar{\nu}_t(A)$ $\forall A \in Bor(\RR^d)$.
\item $\forall t \in J$, $B_s \to B_t$, $\left<u,C_s u\right> \to  \left<u,C_t u\right>$ $\forall u \in \RR^d$ and $\bar{\nu}_s(A) \to \bar{\nu}_t(A)$ $\forall A \in Bor(\RR^d)$ such that $A \subseteq \{z \in \RR^d :|z|>\e\}$ for some $\e>0$.
\end{itemize}

If $\forall t \in J$, $\int_{\RR^d} |y|1_{|y| \leq 1}\bar{\nu}_t(dy)<\infty$, we can replace $B_t$ by $B_t^0$ in the L\'{e}vy-Khintchine representation of $L$, where $B^0_t:=B_t-\int_{\RR^d} y1_{|y| \leq 1}\bar{\nu}_t(dy)$. We denote by $(B_t^0,C_t,\bar{\nu}_t)^0_{t \in J}$ this other version of the spot characteristics of $L$.\\
 
In the case of PIIAC, there exists $(b_t)_{t \in J} \subseteq \RR^d$, $(c_t)_{t \in J}$ a family of $d \times d$ symmetric nonnegative-definite matrices and $(\nu_t)_{t \in J}$ a family of measures on $\RR^d$ satisfying:
\begin{align*}
& \int_0^{T_\infty} \left(|b_s|+||c_s||+\int_{\RR^d} (|y|^2 \wedge 1)\nu_s(dy) \right)ds< \infty \\
& B_t=\int_0^t b_s ds\\
& C_t=\int_0^t c_s ds\\
& \bar{\nu}_t(A)=\int_0^t \nu_s(A) ds \hspace{3mm} \forall A \in Bor(\RR^d)
\end{align*}

where $||c_t||$ denotes any norm on the space of $d \times d$ matrices. $(b_t,c_t,\nu_t)_{t \in J}$ is called the local characteristics of $L$.\\

By \cite{Kl}, we have the following representation for $L$:
\begin{align*}
L_t=\int_0^tb_sds+\int_0^t \sqrt{c_s}dW_s+\int_0^t \int_{\RR^d} y1_{|y| \leq 1} (N-\bar{\nu}) (ds dy) + \sum_{s \leq t} \Delta L_s1_{|\Delta L_s| > 1}
\end{align*}

where for $A \in Bor(\RR^d)$: $\bar{\nu}([0,t] \times A):=\bar{\nu}_t (A)$ and $N$ the Poisson measure of $L$ ($N-\bar{\nu}$ is then called the compensated Poisson measure of $L$). $(W_t)_{t \in J}$ is a $d$-dimensional Brownian motion on $\RR^d$, independent from the jump process $\int_0^t \int_{\RR^d} y1_{|y| \leq 1} (N-\nu) (ds dy) + \sum_{s \leq t} \Delta L_s1_{|\Delta L_s| > 1}$. $\sqrt{c_t}$ here stands for the unique symmetric nonnegative-definite square root of $c_t$. Sometimes it is convenient to write a Cholesky decomposition $c_t=h_t h_t^T$ and replace $\sqrt{c_t}$ by $h_t$ in the previous representation.\\

It can be shown - see \cite{RW} - that the infinitesimal generator of the semigroup $\G$ is given by:
\begin{align*}
\begin{split}
& A_\G(t)f(z)=\sum_{j=1}^d b_t(j) \frac{\partial f}{\partial x_j}(z)+\frac{1}{2}\sum_{j,k=1}^d c_t(j,k) \frac{\partial^2 f}{\partial x_j \partial x_k}(z)\\
&+\int_{\RR^d} \left(f(z+y)-f(z)-\sum_{j=1}^d \frac{\partial f}{\partial x_j}(z) y(j)1_{|y| \leq 1}\right)\nu_t(dy)
\end{split}
\end{align*}

and that $Y_1=C^2_0(\RR^d) \subseteq \D(A_\G(t))=\D(A_\G)$. And if $b^0_t:=b_t-\int_{\RR^d} y1_{|y| \leq 1}\nu_t(dy)$ is well-defined:
\begin{align*}
& A_\G(t)f(z)=\sum_{j=1}^d b^0_t(j) \frac{\partial f}{\partial x_j}(z)+\frac{1}{2}\sum_{j,k=1}^d c_t(j,k) \frac{\partial^2 f}{\partial x_j \partial x_k}(z)+\int_{\RR^d} (f(z+y)-f(z))\nu_t(dy)
\end{align*}

Now to introduce inhomogeneous L\'{e}vy Random Evolutions, we consider $(L^x)_{x \in X}$ a collection of inhomogeneous L\'{e}vy processes on $(\Omega, \F, \PP)$  with local characteristics $(b_t(x),c_t(x),\nu_t(x))$. Define for $z \in \RR^d$, $(s,t) \in \Delta_J$, $x \in X$:
\begin{align*}
\G_x(s,t)f(z):=\EE[f(L^x_t-L^x_s+z)]=\int_{\RR^d}p^x_{s,t}(z,dy)f(y)
\end{align*} 
 
This inhomogeneous Random evolution is regular, $\B(Y)-$contraction because the corresponding semigroup is. In this case and under some technical conditions, we can actually prove the compact containment criterion in the case where $d=1$ (this result can probably be extended to any $d$). Indeed, define the jump operators:
 \begin{align*}
D^\e(x,y)f(z):=f(z+\e \al(x,y))
\end{align*}

where $\al \in B^b_\RR(X \times X, \X \otimes \X)$, so that $D_1^0(x,y)f=\al(x,y) f'$ and $ Y_1\subseteq \D(D_1)$. Let $\widetilde{L}$ be an inhomogeneous L\'{e}vy process with local characteristics $(0,0,\nu_t)$ and $\widetilde{\mu}_{s,t}$ the law of $\widetilde{L}_t-\widetilde{L}_s$. Assume that:
 \begin{enumerate}[i)]
 \item $\nu_t(x)=\nu_t$ $\forall x \in X$ (the L\'{e}vy measure is the same over all states)
 \item $\sup_{x,t} |b_t(x)| \leq r \in \RR^+$ and $\sup_{x,t} \sqrt{c_t(x)} \leq \sigma \in \RR^+$ (uniformly bounded drift and volatility)
 \item $\forall (s,T) \in \Delta_J$, the collection of measures $\{\widetilde{\mu}_{s,t}\}_{t \in [s,T]}$ is tight.
 \end{enumerate}

Then using \ref{18}, $\{V_\e\}$ satisfies \textbf{(A1)}, i.e. the compact containment criterion in $Y_1$ (see proof in Appendix \ref{A2}).\\

Let's make some comments about the other assumptions of \ref{27}. \textbf{(A0)} will be satisfied with $\widehat{\D}:=C_0^3(\RR^d)$ and provided the local characteristics $(b_t(x),c_t(x),\nu_t(x))$ are bounded and differentiable. By the discussion we had just before \ref{27}, \textbf{(A2)} will be satisfied because from the expression of $A_x(t)$ below and boundedness of the local characteristics, the following family of functions converge uniformly to $0$ at infinity, are equicontinuous and uniformly bounded:
\begin{align*} 
& \{\widehat{A}(t)f: t \in[s,T] \} 
\end{align*} 

About \textbf{(A3)}, the generator $A_x(t)$ has the following expression:

\begin{align*}
A_x(t)f(z)&=\sum_{j=1}^d b^x_t(j) \frac{\partial f}{\partial x_j}(z)+\frac{1}{2}\sum_{j,k=1}^d c^x_t(j,k) \frac{\partial^2 f}{\partial x_j \partial x_k}(z)\\&+\int_{\RR^d} \left(f(z+y)-f(z)-\sum_{j=1}^d \frac{\partial f}{\partial x_j}(z) y(j)1_{|y| \leq 1} \right)\nu^x_t(dy)
\end{align*}

Keep in mind that:
\begin{align*}
& \widehat{A}(t)=\frac{1}{M_\rho} \sum_{x \in X} \left(m_1(x) A_x(t)+PD_1^0(x,\bullet)(x) \right) \rho_x
\end{align*}

It is clear that $\widehat{A}$ is the generator of an inhomogeneous L\'{e}vy semigroup with local characteristcs $(\widehat{b}_t,\widehat{c}_t,\widehat{\nu}_t)$ given by:
\begin{align*}
& \widehat{b}_t=\frac{1}{M_\rho} \sum_{x \in X} \left(m_1(x) b_t^x+P\al(x,\bullet)(x) \right) \rho_x\\
& \widehat{c}_t=\frac{1}{M_\rho} \sum_{x \in X} m_1(x)  \rho_x c_t^x\\
& \widehat{\nu}_t(dy)=\frac{1}{M_\rho} \sum_{x \in X} m_1(x) \rho_x \nu_t^x(dy)
\end{align*}

In particular we check that $\widehat{\nu}_t$ is still a L\'{e}vy measure on $\RR^d$. \\


\appendix

\section{Proofs of the results in section 2 and 3}\label{A1}

\subsection{Proof of \ref{111}}
Let $(s,t) \in \Delta_J$, $f \in Y_1$.
\begin{align*}
&\frac{\partial^-}{\partial s} \G(s,t)f=\lim_{\substack{h \downarrow 0\\ (s-h,t) \in \Delta_J}}  \frac{\G(s,t)f- \G(s-h,t)f}{h}\\
&=-\lim_{\substack{h \downarrow 0\\ (s-h,t) \in \Delta_J}} \frac{ \G(s-h,s)- I}{h}\G(s,t)f=-A_\G(s) \G(s,t)f 
\end{align*}
since $ \G(s,t)f \in \D(A_\G)$.\\

For $s<t$:
\begin{align*}
&\frac{\partial^+}{\partial s} \G(s,t)f=\lim_{\substack{h \downarrow 0\\ (s+h,t) \in \Delta_J}}  \frac{\G(s+h,t)f- \G(s,t)f}{h}\\
&=-\lim_{\substack{h \downarrow 0\\ (s+h,t) \in \Delta_J}} \frac{ \G(s,s+h)- I}{h}\G(s+h,t)f\\
\end{align*}
Let $h \in (0,t-s]$:
\begin{align*}
\begin{split}
&\left| \left| \frac{(\G(s,s+h)- I)}{h}\G(s+h,t)f -A_\G(s)\G(s,t)f \right| \right| \\& \leq \left| \left| \frac{(\G(s,s+h)- I)}{h}\G(s,t)f-A_\G(s)\G(s,t)f \right| \right| \\&+ \left| \left| \frac{(\G(s,s+h)- I)}{h} \right| \right|_{\B(Y_1,Y)} ||\G(s+h,t)f-\G(s,t)f||_{Y_1}
\end{split}
\end{align*}
, the last inequality holding because $\forall (s,t) \in \Delta_J$: $\G(s,t)Y_1 \subseteq Y_1$.\\

We are going to apply the uniform boundedness principle to show that :
\begin{align*}
 \sup \limits_{h \in (0,t-s]}\left| \left| \frac{(\G(s,s+h)- I)}{h} \right| \right|_{\B(Y_1,Y)} < \infty
\end{align*}
$Y_1$ is Banach. We have to show that  $\forall g \in Y_1$: $\sup \limits_{h \in (0,t-s]} \left| \left| \frac{(\G(s,s+h)- I)}{h}g \right| \right|< \infty$. Let $g \in Y_1$. We have $\left| \left| \frac{(\G(s,s+h)- I)}{h}g \right| \right| \stackrel{h \downarrow 0}{\to} ||A_\G(s)g||$ since $Y_1 \subseteq  \D(A_\G)$. $\exists \delta(g) \in (0,t-s):$ $h \in (0,\delta) \Rightarrow \left| \left| \frac{(\G(s,s+h)- I)}{h}g \right| \right| <1 + ||A_\G(s)g||$. Then, by $Y_1$-strong $t-$continuity of $\G$, $h \to \left| \left| \frac{(\G(s,s+h)- I)}{h}g \right| \right| \in C([\delta,t-s])$. Let $M:=\max_{h \in [\delta,t-s]}\left| \left| \frac{(\G(s,s+h)- I)}{h}g \right| \right|$. Then we get $\left| \left| \frac{(\G(s,s+h)- I)}{h}g \right| \right| \leq \max(M,1 + ||A_\G(s)g||)$ $\forall h \in (0,t-s]$ and so $ \sup \limits_{h \in (0,t-s]} \left| \left| \frac{(\G(s,s+h)- I)}{h}g \right| \right|< \infty$.\\

Further, by $Y_1-$super strong $s-$continuity of $\G$, $||\G(s+h,t)f-\G(s,t)f ||_{Y_1} \stackrel{h \downarrow 0}{\to} 0$. Finally, since $\G(s,t)f \in \D(A_\G)$, $\left| \left| \frac{(\G(s,s+h)- I)}{h}\G(s,t)f -A_\G(s)\G(s,t)f  \right| \right|  \stackrel{h \downarrow 0}{\to} 0$.\\

Therefore we get $\frac{\partial^+}{\partial s} \G(s,t)f= -A_\G(s)\G(s,t)f$ for $s<t$, which shows that $\frac{\partial}{\partial s} \G(s,t)f= -A_\G(s)\G(s,t)f$ for $(s,t) \in \Delta_J$.\\

\subsection{Proof of \ref{2}}
Let $(s,t) \in \Delta_J$, $f \in Y_1$. We have:
\begin{align*}
&\frac{\partial^+}{\partial t} \G(s,t)f=\lim_{\substack{h \downarrow 0\\ (s,t+h) \in \Delta_J}} \frac{\G(s,t+h)f- \G(s,t)f}{h}\\
&=\lim_{\substack{h \downarrow 0\\ (s,t+h) \in \Delta_J}} \G(s,t)\frac{(\G(t,t+h)- I)f}{h}\\
\end{align*}
And for $h \in J$: $t+h \in J$:
\begin{align*}
&\left| \left| \G(s,t)\frac{(\G(t,t+h)- I)f}{h} -\G(s,t) A_\G(t)f \right| \right| \\&\leq ||\G(s,t)||_{\B(Y)} \left| \left| \frac{(\G(t,t+h)- I)f}{h}-A_\G(t)f \right| \right|  \stackrel{h \downarrow 0}{\to} 0
\end{align*}
since $f \in \D(A_\G)$. Therefore $\frac{\partial^+}{\partial t} \G(s,t)f=\G(s,t) A_\G(t)f$.\\

Now if $s<t$:
\begin{align*}
&\frac{\partial^-}{\partial t} \G(s,t)f=\lim_{\substack{h \downarrow 0 \\ (s,t-h) \in \Delta_J}} \frac{\G(s,t)f- \G(s,t-h)f}{h}\\
&=\lim_{\substack{h \downarrow 0 \\ (s,t-h) \in \Delta_J}} \G(s,t-h) \frac{(\G(t-h,t)- I)f}{h}\\
\end{align*}

For $h \in (0,t-s]$: 
\begin{align*}
\begin{split}
&\left| \left| \G(s,t-h) \frac{(\G(t-h,t)- I)f}{h} -\G(s,t) A_\G(t)f \right| \right|\\ &\leq  ||\G(s,t-h)||_{\B(Y)} \left| \left| \frac{(\G(t-h,t)- I)f}{h}-A_\G(t)f \right| \right| \\  & +  ||(\G(s,t-h)-\G(s,t))A_\G(t)f||
\end{split}
\end{align*}

Since $f \in \D(A_\G)$, $\left| \left| \frac{(\G(t-h,t)- I)f}{h}-A_\G(t)f \right| \right|  \stackrel{h \downarrow 0}{\to} 0$. By $Y-$strong $t$-continuity of $\G$: $||(\G(s,t-h)-\G(s,t))A_\G(t)f|| \stackrel{h \downarrow 0}{\to} 0$. By the principle of uniform boundedness together with the $Y-$strong $t$-continuity of $\G$, we have $\sup_{h \in (0,t-s]} ||\G(s,t-h)||_{\B(Y)} \leq \sup_{h \in [0,t-s]} ||\G(s,t-h)||_{\B(Y)} <\infty$.\\

Therefore we get $\frac{\partial^-}{\partial t} \G(s,t)f=\G(s,t) A_\G(t)f$ for $s<t$, which shows $\frac{\partial}{\partial t} \G(s,t)f=\G(s,t) A_\G(t)f$ for $(s,t) \in \Delta_J$.\\

\subsection{Proof of \ref{7}}
Let $f \in Y_1$, $(s,t) \in \Delta_J$. First $u \to \G(s,u) A_\G(u) f \in B_Y([s,t])$ as the derivative of $u \to \G(s,u)f$. By the principle of uniform boundedness together with the $Y-$strong $t-$continuity of $\G$, we have $M:=\sup_{u \in [s,t]} ||\G(s,u)||_{\B(Y)} <\infty$. We then observe that for $u \in [s,t]$:
\begin{align*}
& ||\G(s,u)A_\G(u)f|| \leq M ||A_\G(u)f|| \leq  M ||A_\G(u)||_{\B(Y_1,Y)}  ||f||_{Y_1} 
\end{align*}

\subsection{Proof of \ref{10}}
Let $(s,u), (u,t) \in \Delta_J$, $f \in Y_1$. Consider the function $\phi: u \to G(u)\G(u,t)f$. We are going to show that $\phi'(u)=0$ $\forall u \in [s,t]$ and therefore that $\phi(s)=\phi(t)$.
We have for $u<t$:
\begin{align*}
& \frac{d^+ \phi}{d u}(u)=\lim_{\substack{h \downarrow 0 \\ h \in (0,t-u]}} \frac{1}{h} [G(u+h) \G(u+h,t) f-G(u) \G(u,t) f]
\end{align*}

Let $h \in (0,t-u]$. We have:
\begin{align*}
\begin{split}
& \left| \left| \frac{1}{h} [G(u+h) \G(u+h,t) f-G(u) \G(u,t) f] \right| \right| \\ &\leq  \underbrace{\left| \left| \frac{1}{h}G(u+h) \G(u,t)f-\frac{1}{h}G(u) \G(u,t)f-G(u)A_\G(u) \G(u,t)f \right| \right|}_{(1)} \\
& + ||G(u+h)||_{\B(Y)} \underbrace{\left| \left| \frac{1}{h} \G(u+h,t) f - \frac{1}{h} \G(u,t) f+  A_\G(u) \G(u,t)f\right| \right|}_{(2)}\\
& + \underbrace{|| G(u+h) A_\G(u) \G(u,t)f-G(u) A_\G(u) \G(u,t)f||}_{(3)}
\end{split}
\end{align*}

And we have:
\begin{align*}
& (1) \to 0 \mbox{ as $G$ satisfies the initial value problem and } \G(u,t) Y_1 \subseteq Y_1 \\
& (2) \to 0 \mbox{ as } \frac{\partial}{\partial u} \G(u,t)f=-A_\G(u) \G(u,t)f\\
& (3) \to 0 \mbox{ by Y-strong continuity of } G
\end{align*}

Further, by the principle of uniform boundedness together with the $Y-$strong continuity of $G$, we have $\sup_{h \in (0,t-u]} ||G(u+h)||_{\B(Y)} \leq \sup_{h \in [0,t-u]} ||G(u+h)||_{\B(Y)} < \infty$. \\

We therefore get $\frac{d^+ \phi}{d u}(u)=0$. Now for $u>s$:
\begin{align*}
& \frac{d^- \phi}{d u}(u)=\lim_{\substack{h \downarrow 0 \\ h \in (0,u-s]}} \frac{1}{h} [G(u) \G(u,t) f-G(u-h) \G(u-h,t) f]
\end{align*}

Let $h \in (0,u-s]$:
\begin{align*}
\begin{split}
&\left| \left| \frac{1}{h} [G(u) \G(u,t) f-G(u-h) \G(u-h,t) f] \right| \right| \\&\leq   \underbrace{\left| \left| \frac{1}{h} G(u) \G(u,t)f- \frac{1}{h} G(u-h) \G(u,t)f -G(u) A_\G(u) \G(u,t)f\right| \right|}_{(4)}\\
&+ ||G(u-h)||_{\B(Y)} \underbrace{\left| \left| -\frac{1}{h} \G(u-h,u)\G(u,t) f + \frac{1}{h} \G(u,t) f+  A_\G(u) \G(u,t)f\right| \right|}_{(5)}\\
&+ \underbrace{||G(u) A_\G(u) \G(u,t)f-G(u-h) A_\G(u) \G(u,t)f||}_{(6)}
\end{split}
\end{align*}

By the principle of uniform boundedness together with the $Y-$strong $t$-continuity of $G$, we have $\sup_{h \in (0,u-s]} ||G(u-h)||_{\B(Y)} \leq \sup_{h \in [0,u-s]} ||G(u-h)||_{\B(Y)} < \infty$. And:
\begin{align*}
& (4) \to 0 \mbox{ as $G$ satisfies the initial value problem and } \G(u,t) Y_1 \subseteq Y_1 \\
& (5) \to 0 \mbox{ as } \G(u,t) Y_1 \subseteq Y_1 \\
& (6) \to 0 \mbox{ by Y-strong continuity of } G
\end{align*}

We therefore get $\frac{d^- \phi}{d u}(u)=0$.

\subsection{Proof of \ref{3}}
Since $\G$ is regular and $f$, $A_\G(u)f \in Y_1$ and $u \to ||A_\G(u)||_{\B(Y_1,Y)}$ is integrable on $[s,t]$ we have by \ref{7}:
\begin{align*}
 \G(s,t)f &=f+\int_s^t\G(s,u)A_\G(u)f du=f+\int_s^t \left[A_\G(u)f + \int_s^u \G(s,r) A_\G(r) A_\G(u)f dr \right]du\\
&=f+\int_s^tA_\G(u)f du+\int_s^t \int_s^u \G(s,r) A_\G(r) A_\G(u)f dr du\\
\end{align*}

If $g_1: [s,t] \to \RR$ and $g_2:[s,t] \to Y$ are differentiable functions, and $g_1g_2'$, $g_1'g_2$ are Bochner integrable on $[s,t]$, then we have by the Fundamental theorem of Calculus for the Bochner integral:
\begin{align*}
& g_1(t)g_2(t)-g_1(s)g_2(s)=\int_s^t (g_1g_2)'(u)du=\int_s^t g_1'(u)g_2(u)du+\int_s^t g_1(u)g_2'(u)du
\end{align*}

With $g_1(u):=(t-u)$ and $g_2(u):=\int_s^u \G(s,r) A_\G(r) A_\G(u)f dr$. We have shown that $g_1'g_2$ is integrable on $[s,t]$. Provided we show that:
\begin{align*}
&g'_2(u)=\frac{\partial}{\partial u} \int_s^u \G(s,r) A_\G(r) A_\G(u)f dr= \G(s,u) A_\G^2(u)f+\int_s^u  \G(s,r) A_\G(r) A'_\G(u)fdr
\end{align*}

the fact that $g_1g_2'$ is integrable on $[s,t]$ by assumption ends the proof. \\

We have:
\begin{align*}
& \frac{\partial}{\partial u} \int_s^u \G(s,r) A_\G(r) A_\G(u)f dr\\&= \lim_{\substack{h \to 0\\(s,u+h) \in \Delta_J}} \frac{1}{h} \left[ \int_s^{u+h} \G(s,r) A_\G(r) A_\G(u+h)f dr-\int_s^u \G(s,r) A_\G(r) A_\G(u)f dr \right]\\
&= \lim_{\substack{h \to 0\\(s,u+h) \in \Delta_J}}\frac{1}{h} \left[ \int_s^{u+h} \G(s,r) A_\G(r) A_\G(u+h)fdr - \int_s^u \G(s,r) A_\G(r) A_\G(u+h)f dr\right]\\
&+\frac{1}{h} \left[ \int_s^{u} \G(s,r) A_\G(r) A_\G(u+h)f - \G(s,r) A_\G(r) A_\G(u)f dr\right]
\end{align*}

By the principle of uniform boundedness together with the $Y-$strong $t$-continuity of $\G$, $Y_1-$strong continuity of $A_\G$ we have $M_1:=\sup_{r \in [s,t]} ||\G(s,r)||_{\B(Y)} < \infty$ and $M_2:=\sup_{r \in [s,t]} ||A_\G(r)||_{\B(Y_1,Y)} < \infty$. Let $h \in (0,t-u]$ (the proof is the same for $h \in [s-u,0)$):
\begin{align*}
& \left| \left| \int_s^{u} \frac{1}{h}\G(s,r) A_\G(r) A_\G(u+h)f - \frac{1}{h}\G(s,r) A_\G(r) A_\G(u)f -\G(s,r) A_\G(r) A'_\G(u)f dr \right| \right| \\
& \leq \int_s^{u} ||\G(s,r)||_{\B(Y)} ||A_\G(r)||_{\B(Y_1,Y)} \left| \left|\frac{1}{h}A_\G(u+h)f -\frac{1}{h} A_\G(u)f-A'_\G(u)f \right| \right|_{Y_1} dr\\
& < \e (u-s) M_1 M_2 \hspace{4mm} \mbox{ for } |h| < \delta_u \mbox{ since $f \in \D(A'_\G)$}
\end{align*}

We have also:
\small
\begin{align*}
& \left| \left|\frac{1}{h} \int_s^{u+h} \G(s,r) A_\G(r) A_\G(u+h)f - \frac{1}{h}\int_s^{u}\G(s,r) A_\G(r) A_\G(u+h)f dr - \G(s,u) A_\G^2(u)f\right| \right|\\
& = \left| \left|\frac{1}{h} \int_u^{u+h} \G(s,r) A_\G(r) A_\G(u+h)f dr -\G(s,u) A_\G^2(u)f\right| \right|\\
& \leq \underbrace{\frac{1}{h} \int_u^{u+h} ||\G(s,r)||_{\B(Y)} ||A_\G(r)||_{\B(Y_1,Y)} ||A_\G(u+h)f -A_\G(u)f ||_{Y_1}dr}_{(1)}\\&+\underbrace{\frac{1}{h} \int_u^{u+h} ||\G(s,r)||_{\B(Y)} ||A_\G(r)A_\G(u)f -A_\G(u)A_\G(u)f ||dr}_{(2)}\\&+ \underbrace{\frac{1}{h} \int_u^{u+h} ||\G(s,r)A^2_\G(u)f -\G(s,u)A^2_\G(u)f ||dr}_{(3)}
\end{align*}

Therefore:
\begin{align*}
& (1) \to 0 \mbox{ since } f \in  \D(A'_\G) \mbox{ and } M_1,M_2 < \infty\\
& (2) \to 0 \mbox{ by } Y_1-\mbox{strong continuity of } A_\G \mbox{ and } M_1< \infty  \\
& (3) \to 0 \mbox{ by } Y-\mbox{strong t-continuity of } \G
\end{align*}

\subsection{Proof of \ref{12}}
The fact that $V(s,t) \in \B(Y)$ is straightforward from the definition of $V$. The semigroup property comes from straightforward computations. Then, we will show that $u \to V(s,u)(\omega)$ is $Y-$strongly continuous on each $[T_{n}(s), T_{n+1}(s)) \cap J(s)$, $n \in \NN$ and $Y-$strongly RCLL at each $T_{n+1}(s) \in J(s)$, $n \in \NN$. Let $n \in \NN$ such that $T_n(s) \in J(s)$. $\forall t \in [T_n(s), T_{n+1}(s)) \cap J(s)$, we have:
\begin{align*}
&V(s,t)=\left[ \prod \limits_{k=1}^{n} \Gamma_{x_{k-1}(s)} \left(T_{k-1}(s), T_{k}(s) \right) D(x_{k-1}(s),x_k(s)) \right] \Gamma_{x_n(s)} \left(T_n(s),t \right)
\end{align*}

Therefore by $Y-$strong $t-$continuity of $\G$, we get that $u \to V(s,u)(\omega)$ is $Y-$strongly continuous on $[T_n(s), T_{n+1}(s)) \cap J(s)$. If $T_{n+1}(s) \in J(s)$, the fact that $V(s,\bullet)$ has a left limit at $T_{n+1}(s)$ also comes from the $Y-$strong $t-$continuity of $\G$: 
\begin{align*}
&V(s, T_{n+1}(s)^-)f= \lim_{h \downarrow 0} G^s_n \Gamma_{x_n(s)} (T_n(s),T_{n+1}(s)-h)f=G^s_n\Gamma_{x_n(s)}(T_n(s),T_{n+1}(s))f\\
&  G^s_n=\prod \limits_{k=1}^{n} \Gamma_{x_{k-1}(s)} \left(T_{k-1}(s), T_{k}(s) \right) D(x_{k-1}(s),x_k(s))
\end{align*}

Therefore we get the relationship:
\begin{align*}
&V(s, T_{n+1}(s))f= V(s, T_{n+1}(s)^-) D(x_{n}(s),x_{n+1}(s))f
\end{align*}

We notice therefore why we used the terminology "continuous inhomogeneous $Y-$random evolution" when $D=I$.

\subsection{Proof of \ref{14}}
Let $s \in J$, $\omega \in \Omega$, $f \in Y_1$. We are going to proceed by induction and show that $\forall n \in \NN$, we have $\forall t \in [T_n(s),T_{n+1}(s)) \cap J(s)$:
\begin{align*}
& V(s,t)f=f+\int_s^t V(s,u)A_{x(u)}(u)f du+\sum \limits_{k=1}^{n} V(s,T_k(s)^-) [D(x_{k-1}(s),x_k(s))-I]f
\end{align*}

For $n=0$, we have $\forall t \in [s,T_1(s)) \cap J(s)$: $V(s,t)f=\G_{x(s)}(s,t) f$, and therefore $V(s,t)f=f+\int_s^t V(s,u)A_{x(u)}(u)f du$ by regularity of $\G$.\\

Now assume that the property is true for $n-1$, namely: $\forall t \in [T_{n-1}(s),T_{n}(s)) \cap J(s)$, we have:
\begin{align*}
& V(s,t)f=f+\int_s^t V(s,u)A_{x(u)}(u)f du+\sum \limits_{k=1}^{n-1} V(s,T_k(s)^-) [D(x_{k-1}(s),x_k(s))-I]f
\end{align*}

Therefore it implies that (by continuity of the Bochner integral):
\begin{align*}
& V(s,T_n(s)^-)f=f+\int_s^{T_n(s)} V(s,u)A_{x(u)}(u)f du+\sum \limits_{k=1}^{n-1} V(s,T_k(s)^-) [D(x_{k-1}(s),x_k(s))-I]f
\end{align*}

Now, $\forall t \in [T_{n}(s),T_{n+1}(s)) \cap J(s)$ we have that:
\begin{align*}
& V(s,t)=G^s_n \G_{x_n(s)}(T_n(s),t)\\
& G^s_n:=\prod \limits_{k=1}^{n} \Gamma_{x_{k-1}(s)} \left(T_{k-1}(s), T_{k}(s) \right) D(x_{k-1}(s),x_k(s))
\end{align*}

and therefore $\forall t \in [T_{n}(s),T_{n+1}(s)) \cap J(s)$, by \ref{2} and regularity of $\G$:
\begin{align*}
& \frac{\partial}{\partial t}V(s,t)f=V(s,t)A_{x(t)}(t)f \Rightarrow V(s,t)f=V(s,T_n(s))f+\int_{T_n(s)}^t V(s,u)A_{x(u)}(u)f du
\end{align*}

Further, by the proof of \ref{12} we get $V(s, T_{n}(s))f= V(s, T_{n}(s)^-) D(x_{n-1}(s),x_{n}(s))f$. Therefore combining these results we have:
\begin{align*}
&V(s,t)f=V(s, T_{n}(s)^-) D(x_{n-1}(s),x_{n}(s))f+\int_{T_n(s)}^t V(s,u)A_{x(u)}(u)f du\\
&=V(s,T_n(s)^-)f+\int_{T_n(s)}^t V(s,u)A_{x(u)}(u)f du+V(s, T_{n}(s)^-) D(x_{n-1}(s),x_{n}(s))f-V(s,T_n(s)^-)f\\
&= f+\int_s^{T_n(s)} V(s,u)A_{x(u)}(u)f du+\sum \limits_{k=1}^{n-1} V(s,T_k(s)^-) [D(x_{k-1}(s),x_k(s))-I]f\\
&+ \int_{T_n(s)}^t V(s,u)A_{x(u)}(u)f du+V(s, T_{n}(s)^-) D(x_{n-1}(s),x_{n}(s))f-V(s,T_n(s)^-)f\\
&=f+\int_s^{t} V(s,u)A_{x(u)}(u)f du+ \sum \limits_{k=1}^{n} V(s,T_k(s)^-) [D(x_{k-1}(s),x_k(s))-I]f
\end{align*}

\section{Proofs of the results in section 5}\label{A2}

\subsection{Proof that $\G$ is a semigroup and that $\forall n \in \NN$ we have $\G(s,t) \in \B(C_0^n(\RR^d))$ and $||\G(s,t)||_{\B(C_0^n(\RR^d))} \leq 1$}

For $n,d \in \NN^*$, let $\NN_{d,n}:=\{\alpha \in \NN^d: \sum_{i=1}^d \alpha(i) \leq n\}$. The space $C^n_0(\RR^d)$ (resp. $C^n_b(\RR^d)$) denotes the space of functions $f: \RR^d \to \RR$ for which $\partial^\alpha f \in C_0(\RR^d)$ (resp. $C_b(\RR^d)$)  $\forall \alpha \in \NN_{d,n}$. This space is Banach for the norm: 
\begin{align*}
&||f||:= \max_{\alpha \in \NN_{d,n}} ||\partial^\alpha f||_{\infty}\\
&\partial^\alpha f:= \frac{\partial^{\sum_{i=1}^d \alpha(i)}f}{\partial_1^{\alpha(1)}...\partial_d^{\alpha(d)}}
\end{align*}

Let for $(s,t) \in \Delta_J$: $\G(s,t)f(x):=\EE[f(L_t-L_s+x)]=\int_{\RR^d}p_{s,t}(x,dy)f(y)$ for $f \in B^b_\RR(\RR^d)$. By linearity of the expectation, $\G(s,t)$ is linear. Further, $\G$ satisfies the semigroup equation because of the Chapman-Kolmogorov equation. Now let's show that $\forall n \in \NN$ we have $\G(s,t) \in \B(C_0^n(\RR^d))$ and $||\G(s,t)||_{\B(C_0^n(\RR^d))} \leq 1$. Let $Y:=C_0(\RR^d)$.\\

Let's first start with $C_0(\RR^d)$. Let $f \in C_0(\RR^d)$. By (\cite{Sa}), we get the representation $\G(s,t)f(x)=\int_{\RR^d} \mu_{s,t}(dy) f(x+y)$, where $ \mu_{s,t}$ is the distribution of $L_t-L_s$, i.e. $\mu_{s,t}(A)=\PP(L_t-L_s \in A)$ for $A \in Bor(\RR^d)$. \\
Let $x \in \RR^d$ and take any sequence $(x_n)_{n \in \NN} \subseteq \RR^d$: $x_n \to x$ and denote $g_n:=y \to f(x_n+y) \in C_0(\RR^d)$ and $g:=y \to f(x+y) \in C_0(\RR^d)$. By continuity of $f$, $g_n \to g$ pointwise. Further, $||g_n|| = ||f|| \in L^1_\RR(\RR^d, Bor(\RR^d),\mu_{s,t})$. Therefore by Lebesgue dominated convergence theorem, we get $\lim_{n \to \infty} \G(s,t)f(x_n)=\int_{\RR^d}\lim_{n \to \infty} \mu_{s,t}(dy) f(x_n+y)=\int_{\RR^d} \mu_{s,t}(dy) f(x+y)=\G(s,t)f(x)$. Therefore $\G(s,t)f \in C(\RR^d)$. By the same argument but now taking any sequence $(x_n)_{n \in \NN} \subseteq \RR^d$: $|x_n| \to \infty$, we get $\lim_{|x| \to \infty} \G(s,t)f(x)=0$ and therefore $\G(s,t)f \in C_0(\RR^d)$.\\
Further, we get:
\begin{align*}
& |\G(s,t)f(x)|=\left| \int_{\RR^d} \mu_{s,t}(dy) f(x+y) \right| \leq  \int_{\RR^d} \mu_{s,t}(dy) |f(x+y)| \leq \int_{\RR^d} \mu_{s,t}(dy) ||f||=||f||
\end{align*}
and therefore $||\G(s,t)||_{\B(C_0(\RR^d))} \leq 1$. \\

Let $x \in \RR^d$, $f \in C_0^q(\RR^d)$ ($q \geq 1$). Take any sequence $(h_n)_{n \in \NN} \subseteq \RR^*$: $h_n \to 0$, $j \in [|1,d|]$ and $f_{n,j}(x):=f(x_1,...,x_j+h_n,...,x_d)$. Then:
\begin{align*}
& \frac{1}{h_n}((\G(s,t)f)_{n,j}(x)-\G(s,t)f(x))=\int_{\RR^d} \mu_{s,t}(dy) \frac{1}{h_n}(f_{n,j}(x+y)-f(x+y))
\end{align*}
Let $g_n:=y \to \frac{1}{h_n}(f_{n,j}(x+y)-f(x+y)) \in C_0(\RR^d)$ and $g:=y \to \frac{\partial f}{\partial x_j}(x+y) \in C_0(\RR^d)$. We have $g_n \to g$ pointwise since $f \in C_0^1(\RR^d)$. By the Mean Value theorem, $\exists z_j(n,x,y) \in [-|h_n|, |h_n|]:$ $g_n(y)=\frac{\partial f}{\partial x_j}(x_1+y_1,...,x_j+y_j+z_j(n,x,y),...,x_d+y_d)$, and therefore $|g_n(y)| \leq \left| \left|\frac{\partial f}{\partial x_j} \right| \right| \in L^1_\RR(\RR^d, Bor(\RR^d),\mu_{s,t})$. Therefore by Lebesgue dominated convergence theorem, we get:
\begin{align*}
& \frac{\partial \G(s,t)f}{\partial x_j}(x)=\int_{\RR^d} \mu_{s,t}(dy) \frac{\partial f}{\partial x_j}(x+y)
\end{align*}

Using the same argument as for $C_0(\RR^d)$, we get that $\G(s,t)f \in C_0^1(\RR^d)$ since $\frac{\partial f}{\partial x_j} \in C_0^1(\RR^d)$ $\forall j \in [|1,d|]$. Repeating this argument by computing successively every partial derivative up to order $q$ by the relationship $\partial^\alpha\G(s,t)f(x)=\int_{\RR^d} \mu_{s,t}(dy)\partial^\alpha f(x+y)$ $\forall \alpha \in \NN_{d,q}$, we get $\G(s,t)f \in C_0^q(\RR^d)$.\\

Further, the same way we got $||\G(s,t)f|| \leq ||f||$ for $f \in C_0(\RR^d)$, we get for $f \in C_0^q(\RR^d)$: $||\partial^\alpha\G(s,t)f|| \leq ||\partial^\alpha f||$ $\forall \alpha \in \NN_{d,q}$. Therefore $\max_{\alpha \in \NN_{d,q}} ||\partial^\alpha \G(s,t)f|| \leq \max_{\alpha \in \NN_{d,q}} ||\partial^\alpha f||$ $\Rightarrow ||\G(s,t)f||_{C_0^q(\RR^d)} \leq ||f||_{C_0^q(\RR^d)} \Rightarrow ||\G(s,t)||_{\B(C_0^q(\RR^d))} \leq 1$.\\

\subsection{Proof that $\G$ is $Y_1-$super strongly $s$-continuous, $Y-$strongly $t$-continuous}

Let $(s,t) \in \Delta_J$, $f \in Y_1$ and $h \in [-s,t-s]$:
\begin{align*}
 ||\G(s+h,t)f-\G(s,t)f||_{Y_1}&=\max_{\alpha \in \NN_{d,m}}  ||\partial^\alpha \G(s+h,t)f - \partial^\alpha \G(t,s)f||\\
&=\max_{\alpha \in \NN_{d,m}}  || \G(s+h,t) \partial^\alpha f - \G(t,s) \partial^\alpha f||
\end{align*}

Let $\alpha \in \NN_{d,m}$ and $\{h_n\}_{n \in \NN} \subseteq [-s,t-s]$ any sequence such that $h_n \to 0$. Let $S_n:=L_t-L_{s+h_n}$ and $S:=L_t-L_s$. We have $S_n \stackrel{P}{\to} S$ by stochastic continuity of additive processes. By the Skorokhod's representation theorem, there exists a probability space $(\Omega', \F', \PP')$ and random variables $\{S_n'\}_{n \in \NN}$, $S'$ on it such that $S_n \stackrel{D}{=} S_n'$, $S \stackrel{D}{=} S'$ and $S_n' \stackrel{a.e.}{\to} S'$. Let $x \in \RR^d$, we therefore get:
\begin{align*}
 | \G(s+h_n,t) \partial^\alpha f(x) - \G(t,s) \partial^\alpha f(x)|&=|\EE'[\partial^\alpha f(x+S_n')-\partial^\alpha f(x+S')]|\\
 & \leq \EE'|\partial^\alpha f(x+S_n')- \partial^\alpha f(x+S')|\\
 \Rightarrow || \G(s+h_n,t) \partial^\alpha f - \G(t,s) \partial^\alpha f || &\leq \sup_{x \in \RR^d} \EE'|\partial^\alpha f(x+S_n')- \partial^\alpha f(x+S')|\\
 & \leq  \EE'[ \sup_{x \in \RR^d}  |\partial^\alpha f(x+S_n')- \partial^\alpha f(x+S')|]
\end{align*}

Further, since $\partial^\alpha f \in Y$, $\partial^\alpha f$ is uniformly continuous on $\RR^d$ and $\forall \e>0$, $\exists \delta>0$: $|x-y|<\delta \Rightarrow |\partial^\alpha f(x)-\partial^\alpha f(y)|<\e$. And because $S_n' \stackrel{a.e.}{\to} S'$, for a.e. $\omega' \in \Omega'$, $\exists N(\omega') \in \NN:$ $n \geq N(\omega') \Rightarrow |S_n'(\omega')-S'(\omega')|=|x+S_n'(\omega')-S'(\omega')-x|<\delta \hspace{4mm} \forall x \in \RR^d \Rightarrow |\partial^\alpha f(x+S_n'(\omega'))-\partial^\alpha f(x+S'(\omega'))|<\e \hspace{4mm} \forall x \in \RR^d$. Therefore we have that 
$g_n:=\sup_{x \in \RR^d}  |\partial^\alpha f(x+S_n')- \partial^\alpha f(x+S')| \stackrel{a.e.}{\to} 0$. Further $|g_n| \leq 2 ||\partial^\alpha f|| \in L^1_\RR(\Omega', \F', \PP')$. By Lebesgue dominated convergence theorem we get:
\begin{align*}
\lim_{n \to \infty} \EE'[ \sup_{x \in \RR^d}  |\partial^\alpha f(x+S_n')- \partial^\alpha f(x+S')|]=0
\end{align*}

We can notice that the proof strongly relies on the uniform continuity of $f$, and therefore on the topological properties of the space $C_0(\RR^d)$ (which $C_b(\RR^d)$ doesn't have). We prove that $\G$ is $Y-$strongly $t$-continuous exactly the same way, but now considering $Y_n:=L_{t+h_n}-L_{s}$ and any sequence $\{h_n\}_{n \in \NN} \subseteq K$ such that $h_n \to 0$, where $K:=[s-t,T_\infty-t]$ if $J=[0,T_\infty]$ and $K:=[s-t,1]$ if $J=\RR^+$.\\ 

\subsection{Proof that $\G$ is regular}

By Taylor's theorem we get $\forall f \in Y_1$, $t \in J$, $x,y \in \RR^d$:
\begin{align*}
&\left| f(x+y)-f(x)-\sum_{j=1}^d \frac{\partial f}{\partial x_j}(x) y(j) \right| \leq \frac{1}{2} |y|^2 \sum_{j,k=1}^d \left| \left| \frac{\partial^2 f}{\partial x_j \partial x_k}(x) \right| \right| \leq \frac{d^2}{2} |y|^2 ||f||_{Y_1}\\
\Rightarrow & || A_\G(t)f|| \leq ||f||_{Y_1} \left[\sum_{j=1}^d |b_t(j)| +\frac{1}{2}\sum_{j,k=1}^d |c_t(j,k)|+2 \nu_t\{|y|>1\}+\frac{d^2}{2} \int_{|y| \leq 1} |y|^2 \nu_t(dy)\right] \\
\Rightarrow & || A_\G(t) ||_{\B(Y_1,Y)} \leq \sum_{j=1}^d |b_t(j)| +\frac{1}{2}\sum_{j,k=1}^d |c_t(j,k)|+2 \nu_t\{|y|>1\}+\frac{d^2}{2} \int_{|y| \leq 1} |y|^2 \nu_t(dy)
\end{align*}

, observing that $\nu_t\{|y|>1\}+\int_{|y| \leq 1} |y|^2 \nu_t(dy)<\infty$ by assumption. Therefore by integrability assumption on the local characteristics and theorem \ref{7}, we get  the regularity of $\G$.\\

\subsection{Proof that $V_\e$ satisfies the compact containment criterion in $Y_1$}

We want to prove \ref{18}. Here we will assume - for sake of clarity - that the diffusion parts of the processes $L^x$ are driven by the same brownian motion $W$. The proof can be extended straightforwardly to the general case where the $|X|$ brownian motions $W^x$ are imperfectly correlated, expressing each one of them as a linear combination of $|X|$ independent brownian motions (Cholesky).\\

We showed $V_\e$ is a $\B(Y_1)-$contraction, so it remains to show the uniform convergence to 0 at infinity. We have the following representation, for $\oo' \in \Omega$ (to make clear that the expectation is wrt $\oo$ and not $\oo'$):
  \begin{align*}
    \begin{split}
  V_{\e}(s,t)(\oo')f(z)=\EE &\left[ f\left( z+  \int_s^t b_u(x(u^{\frac{1}{\e},s})(\oo')) du + \int_s^t \sqrt{c_u(x(u^{\frac{1}{\e},s})(\oo'))} dW_u+ \widetilde{L}_t -\widetilde{L}_s \right. \right.\\
 & \left. \left.  + \sum \limits_{k=1}^{N_s \left(t^{\frac{1}{\e},s} \right)(\oo')} \e \al(x_{k-1}(s)(\oo'),x_{k}(s)(\oo'))  \right) \right]
 \end{split}
  \end{align*}
 
 Let:
   \begin{align*}
       \begin{split}
  g_{\oo',t,\e}(z)=\EE &\left[ f\left( z+  \int_s^t b_u(x(u^{\frac{1}{\e},s})(\oo')) du + \int_s^t \sqrt{c_u(x(u^{\frac{1}{\e},s})(\oo'))} dW_u \right. \right. \\
  & \left. \left. + \sum \limits_{k=1}^{N_s \left(t^{\frac{1}{\e},s} \right)(\oo')} \e \al(x_{k-1}(s)(\oo'),x_{k}(s)(\oo'))  \right) \right]
      \end{split}
  \end{align*}
  
  and if $\NNN$ stands for the normal distribution, define:
  \begin{align*}
  & Z_1:=\int_s^t \sqrt{c_u(x(u^{\frac{1}{\e},s})(\oo'))} dW_u, \mbox{ so that } Z_1 \sim \NNN \left(0, \sigma_1^2:=\int_s^t c_u(x(u^{\frac{1}{\e},s})(\oo')) du\right)\\
  & Z_2:=\sigma (W_T-W_s)\mbox{ so that } Z_2 \sim \NNN \left(0, \sigma_2^2:= \sigma^2 (T-s)\right)
  \end{align*}
  
  Let $\delta>0$. There exists $k_\delta>0$ such that $\PP[|Z_2|>k_\delta]<\frac{\delta}{2||f||}$. Since $\sigma_1 \leq \sigma_2$, then $\PP[|Z_1|>k_\delta] \leq \PP[|Z_2|>k_\delta]<\frac{\delta}{2||f||}$. And letting $B:=\{|Z_1|>k_\delta\}$:
   \begin{align*}
  &\left| \EE \left[ 1_B f\left( z+  \int_s^t b_u(x(u^{\frac{1}{\e},s})(\oo')) du + \int_s^t \sqrt{c_u(x(u^{\frac{1}{\e},s})(\oo'))} dW_u \right. \right. \right. \\
  & \left. \left. \left. +\sum \limits_{k=1}^{N_s \left(t^{\frac{1}{\e},s} \right)(\oo')} \e \al(x_{k-1}(s)(\oo'),x_{k}(s)(\oo'))  \right) \right] \right| \leq ||f|| \PP(B)<\frac{\delta}{2}
  \end{align*}

 To define $A_\e$, we do the following: since $\lim_{t \to \infty }\frac{N(t)}{t} =\frac{1}{M_\rho}$ a.e., then in particular $\frac{N(t)}{t} \stackrel{P}{\to} \frac{1}{M_\rho}$ and therefore $\exists \e_0>0:$ 
\begin{align*}
|\e|<\e_0 \Rightarrow \PP \left[\e N_s \left(T^{\frac{1}{\e},s} \right) \leq 1+\frac{T-s}{M_\rho} \right]  \geq 1-\frac{\Delta}{2}
\end{align*}

In addition, since $\PP \left[ N_s \left(T^{\frac{1}{\e_0},s} \right) < \infty \right] =1$, and because every probability measure on a Polish space is tight (here $\RR$ is Polish), $\exists n_0 \in \NN:$ 
\begin{align*}
\PP \left[ N_s \left(T^{\frac{1}{\e_0},s} \right) \leq n_0 \right] \geq 1-\frac{\Delta}{2}
\end{align*}

so that $\forall \e \in (0,1]$, letting $n_{00} := \left(1+\frac{T-s}{M_\rho} \right) \vee n_0$:
\begin{align*}
 \PP \left[ \e N_s \left(T^{\frac{1}{\e},s} \right) \leq n_{00} \right]=\PP(A_\e) \geq1-\Delta
\end{align*}

 Note that $n_{00}$ only depends on $T$, $s$ and $ \Delta$. Now,  $\exists c_\delta>0$: $|z|>c_\delta \Rightarrow |f(z)|<\frac{\delta}{2}$.  We have for $\oo' \in A_\e$ and $|z|>C_\delta:=c_\delta+||\al|| n_{00}+k_\delta+r(T-s)$:
   \begin{align*}
  &\left| \EE \left[ 1_{B^c} f\left( z+  \int_s^t b_u(x(u^{\frac{1}{\e},s})(\oo')) du + \int_s^t \sqrt{c_u(x(u^{\frac{1}{\e},s})(\oo'))} dW_u \right. \right. \right. \\
  & \left. \left. \left. +\sum \limits_{k=1}^{N_s \left(t^{\frac{1}{\e},s} \right)(\oo')} \e \al(x_{k-1}(s)(\oo'),x_{k}(s)(\oo'))  \right) \right] \right| <  \frac{\delta}{2} \PP(B^c) \leq \frac{\delta}{2}
   \end{align*}
 
 so that $|g_{\oo',t,\e}(z)|< \delta$ for $|z|>C_\delta$ (uniform in $\oo',t,\e$). Now we have that:
 \begin{align*}
 V_{\e}(s,t)(\oo')f=\G_{\widetilde{L}}(s,t)g_{\oo',t,\e},
 \end{align*}
 
 where $\G_{\widetilde{L}}$ is the inhomogeneous semigroup corresponding to $\widetilde{L}$, so that:
  \begin{align*} 
  & V_{\e}(s,t)(\oo')f(z)=\int_{\RR} \widetilde{\mu}_{s,t}(dy) g_{\oo',t,\e}(z+y)
  \end{align*} 

By tightness of the family $\{\widetilde{\mu}_{s,t}\}_{t \in [s,T]}$, $\exists \overline{C}_\delta>0:  \widetilde{\mu}_{s,t}\{|y| > \overline{C}_\delta\}<\delta$ $\forall t \in [s,T]$. Let $c^*_\delta:=2(C_\delta \vee \overline{C}_\delta)$ and we have for $|z|>c^*_\delta$, observing that $|| g_{\oo',t,\e}|| \leq ||f||$:
 
  \begin{align*} 
   &V_{\e}(s,t)(\oo')f(z)=\int_{|y| \leq \overline{C}_\delta}  \widetilde{\mu}_{s,t}(dy) g_{\oo',t,\e}(z+y)+\int_{|y| > \overline{C}_\delta}  \widetilde{\mu}_{s,t}(dy) g_{\oo',t,\e}(z+y)\\
  &\Rightarrow | V_{\e}(s,t)(\oo')f(z)| \leq  \int_{|y| \leq \overline{C}_\delta} \widetilde{\mu}_{s,t}(dy) |g_{\oo',t,\e}(z+y)| + ||f||  \widetilde{\mu}_{s,t}\{|y| > \overline{C}_\delta\} 
   \end{align*}
 
 But $|y| \leq \overline{C}_\delta \Rightarrow |z+y| \geq |z|-|y| \geq c^*_\delta-\overline{C}_\delta \geq C_\delta \Rightarrow |g_{\oo',t,\e}(z+y)| < \delta$, and so $|V_{\e}(s,t)(\oo')f(z)| < \delta(||f|| +1)$.\\
 
 Because $c^*_\delta$ is uniform in $\omega'$, $t$ and $\e$, we have finished the proof.

\end{document}